\newtheorem{thm}{}[section]
\newtheorem{theorem}[thm]{Theorem}
\newtheorem{corollary}[thm]{Corollary}
\newtheorem{lemma}[thm]{Lemma}
\newtheorem{proposition}[thm]{Proposition}
\theoremstyle{definition}
\newtheorem{definition}[thm]{Definition}
\theoremstyle{remark}
\newtheorem{remark}[thm]{Remark}
\newtheorem{question}[thm]{Question}
\numberwithin{equation}{section}
\newcommand{\II}{\ensuremath{\mathcal{I}}}
\newcommand{\Nt}{\ensuremath{\mathcal{N}}}
\newcommand{\Pt}{\ensuremath{\mathcal{P}}}
\newcommand{\Mt}{\ensuremath{\mathcal{M}}}
\newcommand{\Ts}{\ensuremath{\mathcal{T}}}
\newcommand{\UU}{\ensuremath{\mathbb{U}}}
\newcommand{\Sp}{\ensuremath{\mathcal{S}}}
\newcommand{\EE}{\ensuremath{\mathbb{E}}}
\newcommand{\RR}{\ensuremath{\mathbb{R}}}
\newcommand{\NN}{\ensuremath{\mathbb{N}}}
\newcommand{\FF}{\ensuremath{\mathbb{F}}}
\newcommand{\LL}{\ensuremath{\bm{L}}}
\newcommand{\RRt}{\ensuremath{\mathcal{R}}}
\newcommand{\Cn}{\ensuremath{\mathcal{C}}}
\newcommand{\GG}{\ensuremath{\mathscr{G}}}
\newcommand{\ee}{\ensuremath{\bm{e}}}
\newcommand{\Id}{\ensuremath{\mathrm{Id}}}
\newcommand{\xx}{\ensuremath{\bm{x}}}
\newcommand{\XX}{\ensuremath{\mathbb{X}}}
\newcommand{\YY}{\ensuremath{\mathbb{Y}}}
\newcommand{\LT}{\ensuremath{\mathcal{L}}}
\newcommand{\Cu}{\ensuremath{\mathcal{Q}}}
\newcommand{\Ft}{\ensuremath{\mathcal{F}}}
\newcommand{\VV}{\ensuremath{\mathbb{V}}}
\newcommand{\yy}{\ensuremath{\bm{y}}}
\def\MR#1{}
\begin{document}

\title[Integrals of quasi-Banach-space-valued functions]{Toward an optimal theory of integration for quasi-Banach-space-valued functions}

\author[J.~L. Ansorena]{Jos\'e L. Ansorena}
\address{Department of Mathematics and Computer Sciences\\
Universidad de La Rioja\\
Logro\~no\\
26004 Spain}
\email{joseluis.ansorena@unirioja.es}

\author[G. Bello]{Glenier Bello}
\address{Institute of Mathematics of the Polish Academy of Sciences\\
00-656 Warszawa\\
ul. \'{S}niadeckich 8\\
Poland}
\email{gbello@impan.pl}

\subjclass[2010]{28B05, 46G10, 46A40, 46A16, 46A32, 46E30}

\keywords{quasi-Banach space, function quasi-norm, integration, galb, tensor product}

\begin{abstract}
We present a new approach to define a suitable integral for functions with 
values in quasi-Banach spaces. The integrals of Bochner and Riemann have 
deficiencies in the non-locally convex setting. 
The study of an integral for $p$-Banach spaces initiated by Vogt is neither totally satisfactory, 
since there are quasi-Banach spaces which are $p$-convex for all $0<p<1$, 
so it is not always possible to choose an optimal $p$ to develop the integration. 
Our method puts the emphasis on the galb of the space, which permits 
a precise definition of its convexity. 
The integration works for all spaces of galbs known in the literature. 
We finish with a fundamental theorem of calculus for our integral. 

\end{abstract}

\thanks{J.~L. Ansorena acknowledges the support of the Spanish Ministry for Science, Innovation, and Universities under Grant PGC2018-095366-B-I00 for \emph{An\'alisis Vectorial, Multilineal y Aproximaci\'on}. G. Bello was supported by National Science Centre, Poland grant UMO-2016/21/B/ST1/00241.}

\maketitle

\section{Introduction}
\noindent
If $\XX$ is a non-locally convex space, it is easy to 
construct a sequence of simple functions 
\[
s_n\colon[0,1]\to\XX, \quad s_n(t)=\sum_{m=1}^{n}\chi_{A_{m,n}}(t)x_{m,n}, 
\]
where $(A_{m,n})_{m=1}^{n}$ is a partition of the interval $[0,1]$ for each $n\in\NN$, 
and $\chi$ denotes the characteristic function, such that 
\[
\sup_{1\le m\le n}\lVert x_{m,n}\rVert \to 0, \quad 
\sum_{m=1}^{n}\mu(A_{m,n})x_{m,n}\nrightarrow  0,
\] 
as $n$ goes to infinity, where $\mu$ denotes the Lebesgue measure
(cf.\ \cite{RolewiczBOOK1985}*{pp.\ 121-123}). 
Therefore, Bochner-Lebesgue integration 
cannot be extended to non-locally convex spaces. 
On the other hand, the definition of the Riemann integral extends verbatim 
for functions defined on an interval $[a,b]$ with values in an $F$-space $\XX$. 
However, it has some problems in the non-locally convex setting. 
For example, Mazur and Orlicz \cite{MO1942} proved that the $F$-space $\XX$ 
is non-locally convex if and only if there is a continuous function 
$f\colon[0,1]\to\XX$ which is not Riemann integrable. 
But the main drawback is that 
the Riemann integral operator $\mathcal{I}_\mathcal{R}$, 
acting from the set of $\XX$-valued simple functions $\mathcal{S}([a,b],\XX)$ 
to $\XX$ by 
\[ 
\mathcal{I}_\mathcal{R}\Big(\sum_{j=1}^{n} x_j \chi_{[t_{j-1},t_j)}\Big)
=
\sum_{j=1}^{n}(t_j-t_{j-1})x_j, 
\]
is not continuous when $\XX$ is not locally convex (see \cite{AA2013}*{Theorem~2.3}). 

An important attempt (somehow missed in the literature) to develop 
a theory of integration based on operators for functions with values 
in a quasi-Banach-space (i.e.\ a locally bounded $F$-space)
was initiated by Vogt \cite{Vogt1967}. 
A remarkable theorem of Aoki and Rolewicz \cites{Aoki1942, Rolewicz1957} 
says that any quasi-normed space is $p$-convex for some $0<p\le 1$. 
The idea of Vogt was the following. 
Given a quasi-Banach space $\XX$, let $0<p\le 1$ be such that $\XX$ is $p$-convex. 
For this fixed $p$, he developed a theory of integration 
based on an identification of tensor spaces with function spaces 
(see \cite{Vogt1967}*{Satz~4}). 
Among the papers that approach integration of quasi-Banach-valued functions from Vogt's point of view we highlight  \cite{Maurey1972}.

The main advantage of Vogt's integration with respect other approaches to integration in the non locally convex setting is that it provides a bounded operator from the space of integrable functions into the target quasi-Banach space. Regarding the limitations, its main drawback is that it depends heavily on the convexity parameter $p$ chosen, and for some spaces there is no optimal choice of $p$. Take, for instance, the weak Lorentz space $L_{1,\infty}=L_{1,\infty}(\RR)$. This classical space, despite not being locally convex, is $p$-convex for any $0<p<1$ (see \cite{Hunt1966}*{(2.3) and (2.6)}).

The concept that permits a precise definition of the convexity of a space 
was introduced and developed by Turpin in a series of papers 
(cf.\ \cites{Turpin1973a,Turpin1973b}) and a monograph (\cite{Turpin1976}) 
in the early 1970’s. 
Given an $F$-space $\XX$, its galb, denoted by $\GG(\XX)$, 
is the vector space of all sequences $(a_n)_{n=1}^{\infty}$ of scalars 
such that whenever $(x_n)_{n=1}^{\infty}$ is a sequence in $\XX$ 
with $\lim x_n=0$, the series $\sum_{n=1}^{\infty}\ a_n\, x_n$ converges in $\XX$. {We say that a sequence space $\YY$ galbs $\XX$ if $\YY\subseteq\GG(\XX)$. With this terminology, $\XX$ is $p$-convex if and only if $\ell_p\subseteq\GG(\XX)$.}

The galb of certain classical spaces is known. 
Turpin~\cite{Turpin1973a} computed the galb of 
locally bounded, non-locally convex  Orlicz function spaces $L_\varphi(\mu)$, where $\mu$ is either a nonatomic measure or the counting measure, and showed that the result is an Orlicz sequence space $\ell_\phi$ modeled after a different Orlicz function $\phi$.
Hern\'andez \cites{Hernandez1983,Hernandez1984,Hernandez1985} continued the study initiated by Turpin and computed, in particular, the galb of certain vector-valued Orlicz spaces. The study of the convexity of Lorentz spaces took a different route.
Before Turpin invented the notion of galb, Stein and Weiss \cite{SteinWeiss1969} proved that the Orlicz sequence space $\ell \log \ell$ galbs $L_{1,\infty}$, and used this result to achieve a Fourier multiplier theorem for  $L_{1,\infty}$. Sj\"{o}gren \cite{Sjogren1990} concluded the study by (implicitely) proving that $\GG(L_{1,\infty})=\ell \log \ell$.  Later on, the convexity type of Lorentz spaces $L_{1,q}$ for $0<q<\infty$ was estudied (see \cites{Sjogren1992,ColzaniSjogren1999}). In \cite{CCS2007}, general weighted Lorentz spaces were considered.

The geometry of spaces of galbs is quite  unknown, however.
Probably, the most significant advance in this direction since seminal Turpin work was made in \cite{Kalton1977}. Solving a question raised in \cite{Turpin1976}, Kalton proved that if $\XX$ is $p$-convex
 and is not $q$-convex for any $q>p$, then  $\GG(\XX)=\ell_p$.

In this paper, we use galbs to develop a theory of integration for quasi-Banach-space-valued functions  in the spirit of Vogt that fits as well as possible the convexity of the target space.
Our construction is closely related to tensor products, and to carry out it we construct topological tensor products adapted to our neeeds.
More precisely, for an appropriate function quasi-norm $\lambda$ over $\NN$
we define the tensor product space $\XX\otimes_\lambda L_1(\mu)$
so that there are  bounded linear canonical maps
\begin{align*}
J&\colon \XX\otimes_\lambda L_1(\mu) \to  L_1(\mu,\XX), \quad x\otimes f \mapsto x f, \text{ and }\\
I&\colon  \XX\otimes_\lambda L_1(\mu) \to \XX, \quad x\otimes f \mapsto x \int_\Omega f\, d\mu.
\end{align*}
If $I$ factors through $J$, that is, there is a map $\II$ (defined on the range of $J$) such that the diagram
\[\xymatrix{
\XX \otimes_\lambda L_1(\mu) \ar[d]_{J} \ar[drr]^{I}& &\\
L_1^\lambda(\mu,\XX):=J(\XX\otimes_\lambda L_1(\mu)) \ar[rr]_-{\II}& &\XX
}\]
commutes, then  $\II$ defines a suitable 
integral for functions in $L_1^\lambda(\XX)$. Thus, we say that $(\lambda,\XX)$ is amenable if $\lambda$ galbs $\XX$ (i.e., $(a_n)_{n=1}^{\infty}\in\GG(\XX)$ whenever $\lambda((a_n)_{n=1}^{\infty})<\infty$) and $I$ factors through $J$.

There is a tight connection between the existence of the integral $\II$ and 
the injectivity of $J$. In fact, we will prove that  if $(\lambda,\XX)$ is amenable, then $J$ is one-to-one (see Theorem~\ref{thm:one-to-one}).
This connection leads us to study the injectivity of $J$. More generally, we consider the map 
\[
J\colon \XX\otimes_\lambda \LL_\rho\to\LL_\rho(\XX)
\]
associated with the quasi-Banach space $\XX$, the function quasi-norm $\lambda$ and a function quasi-norm $\rho$ over $(\Omega,\Sigma,\mu)$, and we obtain results that generalize those previously obtained for  Lebesgue spaces $L_q(\mu)$ and tensor quasi-norms in the sense of $\ell_p$, $0<p\le q\le \infty$ (see  \cite{Vogt1967}*{Satz 4}).

With the terminology of this paper, Vogt proved that if $\XX$ is a $p$-Banach space, 
$0<p\le 1$, then $(\ell_p,\XX)$ is amenable. 
So, in order to exhibit the applicability of 
the theory of integration developed within this paper, 
we must exhibit new examples of amenable pairs. 
Since the space of galbs of the quasi-Banach space $\XX$  arises from a function quasi-norm on $\NN$, say $\lambda_\XX$, the question of whether the pair $(\lambda_\XX,\XX)$ is amenable arises. For answering it, 
one first need to know whether the space of galbs $\GG(\XX)$ is 
always $1$-concave as a quasi-Banach lattice or not. See Questions~\ref{qt:ame} and \ref{question:B}.  
As long as there is no general answer to these questions, 
we focus on the spaces of galbs that have appeared in the literature. 
In Theorem~\ref{thm:Orlicz} we prove that for all of them 
Question~\ref{qt:ame} has a positive answer. 

Once the theory is built, the first goal should be the study of 
its integration properties. By construction, our integral behaves linearly and has suitable convergence properties. Hence, we finish with a fundamental theorem of calculus 
for our integral (see Theorem~\ref{thm:ftc}). 

The paper is organized as follows. 
In Section~\ref{sec:terminology} we introduce the terminology and notation 
that will be employed. 
The theory of function norms (i.e., the locally convex setting) 
has been deeply developed (cf.\ \cites{LuxZaa1963a,BennettSharpley1988}). 
However, a systematic study in the non-locally convex setting is missing. 
For that reason, in Section~\ref{sect:fqn} we do a brief survey on 
function quasi-norms covering the most relevant aspects, and 
all the results that we need. Section~\ref{sec:galbs} is devoted to galbs.  
In Section~\ref{sec:tensor} we briefly collect some results on tensor products.  
In Section~\ref{sec:integration} we present our main results of integration for 
quasi-Banach-space-valued functions. 
Finally, in Section~\ref{sec:ftc} we give a fundamental theorem of calculus 
that improves \cite{AA2013}*{Theorem 5.2}.

\section{Terminology}\label{sec:terminology}
\noindent
We use standard terminology and notation in Banach space theory as can be found, e.g., in \cites{AlbiacKalton2016}. The unfamiliar reader will find general information about quasi-Banach spaces in \cite{KPR1984}. We next gather the notation on quasi-Banach spaces that we will use.

A \emph{quasi-normed} space will be a vector space over the real or complex field $\FF$ endowed with a \emph{quasi-norm}, i.e., a map $\Vert \cdot\Vert\colon \XX\to [0,\infty)$ satisfying
\begin{enumerate}[label={(Q.\arabic*)}]
\item\label{it:H} $\Vert x\Vert = 0$ if and only if $x=0$;

\item\label{it:Hom} $\Vert t x\Vert =|t| \Vert x\Vert$ for $t \in \FF$ and $x\in \XX$; and

\item\label{it:modconcavity} there is a constant $\kappa\ge 1$ so that for all $x$ and $y$ in $\XX$ we have
\[
\Vert x +y\Vert \le \kappa (\Vert x\Vert +\Vert y\Vert).
\]
\end{enumerate}
The smallest number $\kappa$ in \ref{it:modconcavity} will be called the \emph{modulus of concavity} of the quasi-norm. If it is possible to take $\kappa=1$ we obtain a norm. A quasi-norm clearly defines a metrizable vector topology on $\XX$ whose base of neighborhoods of zero is given by sets of the form $\{x\in \XX \colon \Vert x\Vert<1/n\}$, $n\in \NN$.
Given $0<p\le 1$, a quasi-normed space is said to be \emph{$p$-convex} if it has an absolutely $p$-convex neighborhood of the origin. A quasi-normed space $\XX$ is $p$-convex if and only if there is a constant $C$ such that
\begin{equation}\label{eq:pconvex}
\left\Vert \sum_{j=1}^n x_j \right\Vert^p \le C \sum_{j=1}^n \Vert x_j \Vert^p, \quad n\in\NN, \, x_j\in\XX.
\end{equation}

If, besides \ref{it:H} and \ref{it:Hom}, \eqref{eq:pconvex} holds with $C=1$ we say that $\Vert \cdot\Vert$ is a \emph{$p$-norm}. Any $p$-norm is a quasi-norm with modulus of concavity at most $2^{1/p-1}$. A \emph{$p$-normed} space is a quasi-normed space endowed with a $p$-norm. By the Aoki-Rolewicz theorem \cites{Aoki1942, Rolewicz1957} any quasi-normed space is $p$-convex for some $0<p\le 1$. In turn, any $p$-convex quasi-normed space can be equipped with an equivalent $p$-norm. Hence, any quasi-normed space becomes, for some $0<p\le 1$, a $p$-normed space under suitable renorming.

A \emph{$p$-Banach} (resp.\ \emph{quasi-Banach}) space is a complete $p$-normed (resp.\ quasi-normed) space. It is known that a $p$-convex quasi-normed space is complete if and only if for every sequence $(x_n)_{n=1}^\infty$ in $\XX$ such that $\sum_{n=1}^\infty \Vert x_n\Vert^p<\infty$ the series $\sum_{n=1}^\infty x_n$ converges.

A semi-quasi-norm on a vector space $\XX$ is a map  $\Vert \cdot\Vert\colon \XX\to [0,\infty)$ satisfying \ref{it:Hom} and \ref{it:modconcavity}. A standard procedure, to which we refer as the \emph{completion method} allow us to manufacture a quasi-Banach from a  semi-quasi-norm (see e.g.\ \cite{AACD2018}*{\S2.2}).

As the Hahn-Banach Theorem depends heavily on  convexity, it does not pass through general quasi-Banach spaces. In fact, there are quasi-Banach spaces as $L_p([0,1])$ for $0<p<1$ whose dual space is null (see \cite{Day1940}). Following \cite{KPR1984}, we say that the quasi-Banach space $\XX$ has \emph{point separation property} if for every $f\in\XX\setminus\{0\}$ there is $f^*\in\XX^*$ such that $f^*(f)\not=0$. 

For any subset $A$ of a quasi-Banach space we denote by $[A]$ its closed linear span.

Given a $\sigma$-finite measure space $(\Omega,\Sigma,\mu)$ and a quasi-Banach space $\XX$, we denote by $L_0^+(\mu)$ the set consisting of all measurable functions from $\Omega$ into $[0,\infty]$, and by $L_0(\mu,\XX)$ the vector space consisting of all measurable functions from $\Omega$ into $\XX$. As usual, we identify almost everywhere (a.e.\ for short) coincident functions. We set $L_0(\mu)=L_0(\mu,\FF)$ and
\[
\Sigma(\mu)=\{ A\in\Sigma \colon \mu(A)<\infty\}.
\]
We denote by $\Sp(\mu,\XX)$ the vector space consisting of all integrable 
$\XX$-valued simple functions. That is,
\[
\Sp(\mu,\XX)=[x \chi_E\colon E\in\Sigma(\mu), \ x\in\XX].
\]
We say that $(\Omega,\Sigma,\mu)$ is \emph{infinite-dimensional} if $\Sp(\mu)=\Sp(\mu,\FF)$ is.

Given $f\in L_0^+(\mu)$ we set
\[
\Omega_f(s)=\{ \omega\in\Omega \colon f(\omega)> s\} \text{ and } \rho_f(s)=\rho(\chi_{\Omega_f(s)}), \quad s\in[0,\infty).
\]
Set also $\Omega_f(\infty)=\{ \omega\in\Omega \colon f(\omega)=\infty\}$ and $\rho_f(\infty)=\rho(\chi_{\Omega_f(\infty)})$. If $\rho$ is the function quasi-norm associated with $L_1(\mu)$, then $\mu_f:=\rho_f$ is the \emph{distribution function} of $f$. We say $f$ has a finite distribution function if $\mu_f(s)<\infty$ for all $s>0$. 

An \emph{order ideal} in $L_0(\mu)$ will be a (linear) subspace $L$ of $L_0(\mu)$ such that $\overline{f} \in L$ whenever $f\in L$, and $\max\{f,g\}\in L$ whenever $f$ and $g$ are real-valued functions in $L$. 
A \emph{cone} in $L_0^+(\mu)$ will be a subset $\Cn$ of $L_0^+(\mu)$ such that 
for all $f,g\in\Cn$ and all $\alpha,\beta\ge0$ we have $f<\infty$ a.e., 
$\alpha f+\beta g\in\Cn$, and $\max\{f,g\}\in\Cn$. 
It is immediate that if $L$ is an order ideal in $L_0(\mu)$, then
\[
L^+:=L\cap L_0^+(\mu)
\]
is a cone in $L_0^+(\mu)$; and reciprocally, if $\Cn$ is a cone in $L_0^+(\mu)$, there is a unique order ideal $L$ with $L^+=\Cn$. Namely,
\[
L=\{f\in L_0(\mu) \colon |f|\le g \text{ for some } g\in \Cn\}
\]

Given a quasi-Banach space $\XX$, we say that 
a quasi-Banach space $\UU$ is \emph{complemented} in $\XX$ via a map $S\colon\UU\to\XX$ 
if there is a map $P\colon\XX\to\UU$ such that $P\circ S=\Id_\UU$.

The \emph{unit vector system} is the sequence $(\ee_k)_{k=1}^\infty$ in $\FF^\NN$ defined by $\ee_k=(\delta_{k,n})_{n=1}^\infty$, where $\delta_{k,n}=1$ if $k=n$ and $\delta_{k,n}=0$ otherwise.
A \emph{block basis sequence} with respect to the unit vector system is a sequence $(f_k)_{k=1}^\infty$ such that
\[
f_k=\sum_{n=1+n_{k-1}}^{n_k} a_n \, \ee_n, \quad k\in\NN
\]
for some sequence $(a_n)_{n=1}^\infty$ in $\FF^\NN$ and some increasing sequence $(n_k)_{k=0}^\infty$ of non-negative scalars with $n_0=0$.

\section{Function quasi-norms}\label{sect:fqn}
\noindent
As mentioned in the Introduction, in contrast to the theory of function norms, 
there is no systematic study in the non-locally convex setting. 
In this section we try to go one step forward in that direction. 
We begin with the basic properties of function quasi-norms. 
Here, we do not impose them to satisfy a Fatou property 
(something that Bennet and Sharpley \cite{BennettSharpley1988} do for 
function norms). We devote a subsection to the study of this property. 
Then we study the properties of absolute continuity and domination for function quasi-norms, 
as well as Minkowski-type inequalities. 
We also discuss the use of conditional expectation 
(via the notion of leveling function quasi-norms), 
which will be relevant for the proof of Theorem~\ref{thm:one-to-one}. 
We conclude the section with some comments on function quasi-norms 
over $\NN$ endowed with the counting measure, a specially important particular case. 

\begin{definition}
A \emph{function quasi-norm} over a $\sigma$-finite measure space 
$(\Omega,\Sigma,\mu)$ is a mapping $\rho\colon L_0^+(\mu)\to [0,\infty]$ such that
\begin{enumerate}[label=(F.\arabic*),leftmargin=*,widest=5,series=fqn]
\item\label{FQN:H} $\rho(t f)=t\rho(f)$ for all $t\ge 0$ and $f\in L_0^+(\mu)$;
\item\label{FQN:M} if $f\le g$ a.e., then $\rho(f)\le \rho(g)$;
\item\label{FQN:SF} if $E\in\Sigma(\mu)$, then $\rho(\chi_E)<\infty$;
\item\label{FQN:CM} for every $E\in\Sigma(\mu)$ and every $\varepsilon>0$, there is $\delta>0$ such that $\mu(A)\le \varepsilon$ whenever $A\in\Sigma$ satisfies $A\subseteq E$ and $\rho(\chi_A)\le \delta$; and 
\item\label{FQN:QSA} there is a constant $\kappa$ such that $\rho(f+g)\le\kappa( \rho(f)+\rho(g))$ for all $f$, $g\in L_0^+(\mu)$.
\end{enumerate}
The optimal $\kappa$ in \ref{FQN:QSA} is called the \emph{modulus of concavity} of $\rho$. 
\end{definition}

Notice that \ref{FQN:CM} implies that $\rho(\chi_E)>0$ for all $E\in\Sigma$ with $\mu(E)>0$. 

\begin{definition}
A \emph{function norm} is a function quasi-norm with modulus of concavity $1$. More generally, given $0<p\le 1$, a \emph{function $p$-norm} is a function $\rho\colon L_0^+(\mu)\to [0,\infty]$ which satisfies 
\ref{FQN:H}--\ref{FQN:CM}, 
and
\begin{enumerate}[label=(F.\arabic*),leftmargin=*,widest=5,resume=fqn]
\item\label{FQN:pSA} $\rho^p(f+g)\le \rho^p(f)+\rho^p(g)$ for all $f$, $g\in L_0^+(\mu)$.
\end{enumerate}
\end{definition}
The inequality $a^p+b^p \le 2^{1-p} (a+b)^p$ for all $a$, $b\in[0,\infty]$ and $p\in (0,1]$ yields that any function $p$-norm is a function quasi-norm with modulus of concavity at most $2^{1/p-1}$.

This generalization of the notion of a function norm follows ideas from \cite{BennettSharpley1988} and \cite{LuxZaa1963a}. Asides \ref{FQN:QSA}, the main differences between our definition and that adopted by Luxemburg and Zaanen in \cite{LuxZaa1963a} lie in restricting ourselves to $\sigma$-finite spaces, and in imposing condition~\ref{FQN:SF}, which, on the one hand, prevents from existing non null sets $E$ on which $\rho$ is trivial (in the sense that if $f\in L_0^+(\mu)$ is null outside $E$ then $\rho(f)$ is either $0$ or $\infty$) and, on the other hand, guarantees the existence of enough functions with finite quasi-norm. 
Regarding the approach in \cite{BennettSharpley1988}, we point out that Bennet and Sharpley imposed a function norm to satisfy
\begin{enumerate}[label=(F.\arabic*),leftmargin=*,widest=5,resume=fqn]
\item\label{FQN:LC} for every $E\in\Sigma(\mu)$ there is a constant $C=C_E$ such that
\begin{equation}\label{eq:rhotoL1}
\int_E f\, d\mu \le C_E \rho(f), \quad f\in L_0^+(\mu).
\end{equation}
\end{enumerate}

The most natural examples of functions quasi-norms are $L_p$-quasi-norms, $0<p<\infty$, defined by
\[
f\mapsto\left( \int_\Omega f^p\, d\mu\right)^{1/p} , \quad f\in L_0^+(\mu).
\]
To avoid introducing cumbrous notations, sometimes the symbol $L_p(\mu)$ will mean the function quasi-norm defining the space $L_p(\mu)$ instead of the space itself, and the same convention will be used for Lorentz and Orlicz spaces. Since, if $\mu$ is not purely atomic and $0<p<1$, $L_p(\mu)$ does not satisfy \ref{FQN:LC}, imposing this condition to all function quasi-norms is somewhat nonsense in the non-locally convex setting. Thus we impose its 
 natural substitute \ref{FQN:CM} instead. 
Also, unlike Bennet and Sharpley, we do not a priori impose $\rho$ to satisfy Fatou property (see Section~\ref{sect:Fatou}). 

\begin{definition}
We say that a function quasi-norm $\rho$ is \emph{rearrangement invariant} if every function $f\in L_0^+(\mu)$ with $\rho(f)<\infty$ has a finite distribution function, and $\rho(f)=\rho(g)$ whenever $\mu_f=\mu_g$.
\end{definition}

The proof of the following lemma is based on the elementary inequality
\[
s \rho_f(s)\le  \rho(f), \quad f\in L_0^+(\mu),\quad s\in[0,\infty].
\]
\begin{lemma}\label{lem1}Let $\rho$ be a function quasi-norm over a $\sigma$-finite measure space $(\Omega,\Sigma,\mu)$.
\begin{enumerate}[label=(\roman*),leftmargin=*,widest=iii]
\item\label{lem1:3} 
If $f\in \Sp(\mu)$, then $\rho(|f|)<\infty$.
\item\label{lem1:2} 
If $f\in L_0^+(\mu)$ satisfies $\rho(f)<\infty$, then $f<\infty$ a.e.
\item\label{lem1:1} 
If $f\in L_0^+(\mu)$ satisfies $\rho(f)=0$, then $f=0$ a.e.
\item\label{lem1:4} 
Let $E\in \Sigma(\mu)$, $s>0$, and $\varepsilon>0$.  
Then there is $\delta>0$ such that for all $f\in L_0^+(\mu)$ with $\rho(f)\le \delta$ we have
\[
\mu(\{ \omega\in E \colon f(\omega)> s\}) \le \varepsilon.
\]
\end{enumerate}
\end{lemma}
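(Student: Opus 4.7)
The plan is to reduce all four statements to the pointwise inequality
$s\,\chi_{\Omega_f(s)}\le f$ (valid a.e.\ for every $s\in[0,\infty)$), which, via \ref{FQN:M} and \ref{FQN:H}, yields $s\,\rho_f(s)\le\rho(f)$. For the case $s=\infty$, the same reasoning applied to $t\,\chi_{\Omega_f(\infty)}\le f$ for arbitrary $t>0$ gives $t\,\rho_f(\infty)\le\rho(f)$ for every $t>0$. This elementary bound, together with $\sigma$-finiteness and \ref{FQN:CM}, is the only tool needed.

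For \ref{lem1:3}, I would write $f=\sum_{j=1}^{n}a_j\chi_{E_j}$ with $E_j\in\Sigma(\mu)$ and $a_j\in\FF$, bound $|f|\le\sum_{j=1}^{n}|a_j|\chi_{E_j}$ by \ref{FQN:M}, and iterate \ref{FQN:QSA} together with \ref{FQN:H} and \ref{FQN:SF} to conclude $\rho(|f|)<\infty$. For \ref{lem1:4}, apply \ref{FQN:CM} to the given pair $(E,\varepsilon)$ to obtain $\delta_0>0$, set $\delta:=s\delta_0$, and note that $A:=\{\omega\in E:f(\omega)>s\}\subseteq E$ satisfies $s\,\chi_A\le f$, whence $\rho(\chi_A)\le\rho(f)/s\le\delta_0$; a second invocation of \ref{FQN:CM} then delivers $\mu(A)\le\varepsilon$.

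Parts \ref{lem1:2} and \ref{lem1:1} both reduce to the same auxiliary principle: if $B\subseteq\Omega$ is measurable and $\rho(\chi_B)=0$, then $\mu(B)=0$. To establish this, use $\sigma$-finiteness to write $\Omega=\bigcup_{n\ge1}\Omega_n$ with $\Omega_n\in\Sigma(\mu)$, and apply \ref{FQN:CM} to each $\Omega_n$ with arbitrary $\varepsilon>0$: since $B\cap\Omega_n\subseteq\Omega_n$ and $\rho(\chi_{B\cap\Omega_n})\le\rho(\chi_B)=0$, we get $\mu(B\cap\Omega_n)\le\varepsilon$ for every $\varepsilon>0$, hence $\mu(B)=0$. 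Part \ref{lem1:2} follows by taking $B=\Omega_f(\infty)$ and using $t\rho(\chi_{\Omega_f(\infty)})\le\rho(f)<\infty$ for all $t>0$ to force $\rho(\chi_{\Omega_f(\infty)})=0$. Part \ref{lem1:1} follows by applying the principle to $B=\Omega_f(1/n)$ for each $n\in\NN$ (since $s\rho_f(s)\le\rho(f)=0$ forces $\rho(\chi_{\Omega_f(1/n)})=0$) and writing $\{f>0\}=\bigcup_n\Omega_f(1/n)$.

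The only real subtlety is the passage from a zero-$\rho$ statement to a zero-$\mu$ statement for sets that are a priori only $\sigma$-finite; condition \ref{FQN:CM} is designed precisely to effect this passage once one localizes to finite-measure pieces, which is exactly what $\sigma$-finiteness of $(\Omega,\Sigma,\mu)$ provides.
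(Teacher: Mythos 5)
Your proof is correct and follows essentially the same route as the paper: everything rests on the elementary bound $s\,\rho_f(s)\le\rho(f)$ together with \ref{FQN:CM}, and your argument for \ref{lem1:4} is identical to the paper's. The only difference is cosmetic: you spell out the auxiliary principle ``$\rho(\chi_B)=0$ implies $\mu(B)=0$'' via localization to finite-measure pieces, which the paper leaves implicit in its remark that \ref{FQN:CM} forces $\rho(\chi_E)>0$ whenever $\mu(E)>0$.
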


\begin{proof}
Statement \ref{lem1:3} is clear. 
Now let $f\in L_0^+(\mu)$. If $\rho(f)$ is finite, then $\rho_f(\infty)=0$ and \ref{lem1:2} follows. If $\rho(f)=0$, then $\rho_f(s)=0$ for all $s>0$. Since $\Omega_f(0)=\cup_{n=1}^\infty \Omega_f(2^{-n})$, we obtain \ref{lem1:1}. 
Finally, let $E\in \Sigma(\mu)$, $s>0$, and $\varepsilon>0$. 
By \ref{FQN:CM}, there is $\tilde{\delta}>0$ such that if $A\subseteq E$ 
with $\rho(\chi_A)\le\tilde{\delta}$, then $\mu(A)\le\varepsilon$. 
Take $\delta:=s\tilde{\delta}$, and let $f\in L_0^+(\mu)$ with $\rho(f)\le \delta$. 
Set $A:=\{ \omega\in E \colon f(\omega)> s\}$. 
Since $\rho(\chi_A)\le\rho(f)/s\le\tilde{\delta}$, we obtain \ref{lem1:4}. 
\end{proof}

\begin{definition}
A function quasi-norm $\rho$ is said to be \emph{$p$-convex} if there is a constant $C$ such that
\[
\textstyle
\rho^p(\sum_{j=1}^n f_j)\le C \sum_{j=1}^n \rho^p(f_j), \quad n\in\NN, \, f_j\in L_0^+(\mu).
\]
\end{definition}

\begin{proposition}[Aoki-Rolewicz Theorem for function quasi-norms]\label{prop:AOFQN}Any function quasi-norm is $p$-convex for some $0<p\le 1$. Indeed, if $\kappa$ is the modulus of concavity we can choose $p$ such that $2^{1/p-1}=\kappa$.
\end{proposition}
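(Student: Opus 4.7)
The plan is to reduce the statement to the Aoki--Rolewicz theorem for quasi-normed spaces recalled in Section~\ref{sec:terminology}. The point is that a function quasi-norm, although defined only on $L_0^+(\mu)$, canonically extends to a quasi-norm on a vector subspace of $L_0(\mu)$: set $\Vert f\Vert_\rho:=\rho(|f|)$ for $f\in L_0(\mu)$, let
\[
L_\rho:=\{f\in L_0(\mu)\colon \Vert f\Vert_\rho<\infty\},
\]
and identify a.e.\ equal functions. Properties \ref{FQN:H}, \ref{FQN:M} and \ref{FQN:QSA}, combined with the pointwise inequality $|f+g|\le|f|+|g|$, imply that $\Vert\cdot\Vert_\rho$ is absolutely homogeneous and $\kappa$-subadditive on $L_\rho$, where $\kappa$ denotes the modulus of concavity of $\rho$. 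Moreover, Lemma~\ref{lem1}\ref{lem1:1} guarantees that $\Vert f\Vert_\rho=0$ forces $f=0$ a.e., so $\Vert\cdot\Vert_\rho$ is an honest quasi-norm (not merely a semi-quasi-norm). Hence $(L_\rho,\Vert\cdot\Vert_\rho)$ is a quasi-normed space whose modulus of concavity does not exceed $\kappa$.

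Now I would invoke the classical Aoki--Rolewicz theorem in its sharp quantitative form: for the value $p\in(0,1]$ with $2^{1/p-1}=\kappa$ there exists a constant $C$ (depending only on $\kappa$) such that
\[
\Bigl\Vert\sum_{j=1}^n f_j\Bigr\Vert_\rho^{\,p}\le C\sum_{j=1}^n\Vert f_j\Vert_\rho^{\,p},\qquad n\in\NN,\; f_j\in L_\rho.
\]
Given $f_1,\dots,f_n\in L_0^+(\mu)$, the desired inequality is trivial if some $\rho(f_j)=\infty$; otherwise each $f_j$ lies in $L_\rho$, and since $f_j\ge 0$ one has $|f_j|=f_j$ and $|\sum_j f_j|=\sum_j f_j$, so the displayed inequality reads exactly
\[
\rho^p\Bigl(\sum_{j=1}^n f_j\Bigr)\le C\sum_{j=1}^n \rho^p(f_j),
\]
which is the claimed $p$-convexity with the sharp exponent.

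The only real subtlety, and hence what I would call the main obstacle, is the recognition that Lemma~\ref{lem1}\ref{lem1:1} is precisely what promotes $\Vert\cdot\Vert_\rho$ from a semi-quasi-norm to a genuine quasi-norm on $L_\rho$; after that observation the proof is pure bookkeeping and all the analytic work is done by the vector-valued Aoki--Rolewicz theorem. Axioms \ref{FQN:SF} and \ref{FQN:CM}, notably, play no role here.
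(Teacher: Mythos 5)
Your argument is correct, but it takes a genuinely different route from the paper's. The paper proves Proposition~\ref{prop:AOFQN} by rerunning the proof of the classical Aoki--Rolewicz theorem (see \cite{KPR1984}*{Lemma 1.1}) directly for the gauge $\rho$ on $L_0^+(\mu)$, and omits the details; you instead reduce to that classical theorem by packaging $\rho$ into the quasi-normed space $(\LL_\rho,\Vert\cdot\Vert_\rho)$ --- essentially Lemma~\ref{lem2}~\ref{lem2:1}, which the paper only records later, so your self-contained verification avoids any circularity --- and then transferring the resulting inequality back to non-negative functions, where $|f_j|=f_j$ and $|\sum_j f_j|=\sum_j f_j$. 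The reduction is complete: when some $\rho(f_j)=\infty$ the inequality is trivial, and when all $\rho(f_j)<\infty$, Lemma~\ref{lem1}~\ref{lem1:2} lets you regard each $f_j$ as an element of $\LL_\rho$ after discarding a null set, which is harmless because \ref{FQN:M} makes $\rho$ depend only on the a.e.\ class. What your route buys is that none of the combinatorial work behind Aoki--Rolewicz has to be redone; it costs nothing of substance. One small quibble: the ``main obstacle'' you single out is not actually an obstacle. The Aoki--Rolewicz argument never uses definiteness of the quasi-norm, so it applies verbatim to the semi-quasi-norm $f\mapsto\rho(|f|)$, and the appeal to Lemma~\ref{lem1}~\ref{lem1:1} is superfluous; dropping it also repairs the slight inconsistency in your closing sentence, since the proof of that lemma does rely on \ref{FQN:CM}.
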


\begin{proof}
It goes over the lines of the proof of the Aoki-Rolewicz Theorem (see e.g.\ \cite{KPR1984}*{Lemma 1.1}). So, we omit it.
\end{proof}

\begin{definition}
Given two function quasi-norms $\rho$ and $\lambda$ over a $\sigma$-finite measure space $(\Omega,\Sigma,\mu)$, we say that $\rho$ \emph{dominates} $\lambda$ if there is a constant $C$ such that $\lambda(f)\le C \rho(f)$ for all $f\in L_0^+(\mu)$. If $\rho$ dominates and is dominated by $\lambda$, we say that $\rho$ and $\lambda$ are \emph{equivalent}.
\end{definition}

\begin{lemma}\label{lem:2}
Let $0<p\le 1$, and let $\rho$ be a function quasi-norm. 
Then $\rho$ is equivalent to a function $p$-norm if and only if it is $p$-convex.
\end{lemma}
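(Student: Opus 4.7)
The direction ``$\Leftarrow$'' is immediate: if $\sigma$ is a function $p$-norm equivalent to $\rho$, say $c_1 \rho \le \sigma \le c_2 \rho$, then for any $f_1, \ldots, f_n \in L_0^+(\mu)$,
\[
\rho^p\Big(\sum_{j=1}^n f_j\Big) \le c_1^{-p}\sigma^p\Big(\sum_{j=1}^n f_j\Big) \le c_1^{-p}\sum_{j=1}^n \sigma^p(f_j) \le \Big(\frac{c_2}{c_1}\Big)^p \sum_{j=1}^n \rho^p(f_j),
\]
so $\rho$ is $p$-convex.

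For the converse, assume $\rho$ is $p$-convex with constant $C$, and mimic the Aoki--Rolewicz construction at the level of function quasi-norms. Define
\[
\sigma(f) := \inf\left\{\Big(\sum_{j=1}^n \rho^p(f_j)\Big)^{1/p} : n \in \NN,\ f_j \in L_0^+(\mu),\ f \le \sum_{j=1}^n f_j\right\}, \quad f \in L_0^+(\mu).
\]
The trivial decomposition $n = 1$, $f_1 = f$ gives $\sigma(f) \le \rho(f)$. Conversely, for any decomposition $f \le \sum_{j=1}^n f_j$, monotonicity \ref{FQN:M} together with the $p$-convexity of $\rho$ yields
\[
\rho^p(f) \le \rho^p\Big(\sum_{j=1}^n f_j\Big) \le C \sum_{j=1}^n \rho^p(f_j);
\]
taking the infimum on the right gives $\rho(f) \le C^{1/p}\sigma(f)$. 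Hence $\sigma$ and $\rho$ are equivalent, and it remains to show that $\sigma$ is itself a function $p$-norm.

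Homogeneity \ref{FQN:H} is clear from the definition. Monotonicity \ref{FQN:M} holds because any admissible decomposition for $g$ is still admissible for $f$ when $f \le g$. Condition \ref{FQN:SF} follows from $\sigma(\chi_E) \le \rho(\chi_E) < \infty$, and \ref{FQN:CM} transfers from $\rho$ by equivalence: if $\sigma(\chi_A) \le \delta$ then $\rho(\chi_A) \le C^{1/p}\delta$, so the $\delta$ supplied by \ref{FQN:CM} for $\rho$ works, after rescaling, for $\sigma$. The crucial point is \ref{FQN:pSA}, which is the very reason for the construction: given decompositions $f \le \sum_i f_i$ and $g \le \sum_j g_j$, concatenating them yields $f + g \le \sum_i f_i + \sum_j g_j$, so
\[
\sigma^p(f+g) \le \sum_i \rho^p(f_i) + \sum_j \rho^p(g_j),
\]
and taking independent infima on the right gives $\sigma^p(f+g) \le \sigma^p(f) + \sigma^p(g)$.

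The argument is a faithful transcription of the classical Aoki--Rolewicz construction into the setting of function quasi-norms, so no single step is a genuine obstacle. The mildest worry is the transfer of \ref{FQN:CM} to $\sigma$, but, as noted, the two-sided equivalence between $\sigma$ and $\rho$ reduces this instantly to the same property for $\rho$.
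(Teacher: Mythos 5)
Your proof is correct and follows essentially the same route as the paper, which defines the equivalent function $p$-norm by the same Aoki--Rolewicz-style infimum over finite decompositions (the paper uses exact decompositions $f=\sum_j f_j$ rather than $f\le\sum_j f_j$, but by monotonicity the two infima coincide, and your variant merely makes \ref{FQN:M} more transparent). The paper states the verification is ``immediate''; you have simply written out the details it omits.
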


\begin{proof}
It is clear that any function $p$-norm is $p$-convex, and $p$-convexity 
is inherited by passing to an equivalent function quasi norm. 
Reciprocally, if $\rho$ is a $p$-convex function quasi-norm over a $\sigma$-finite measure space $(\Omega,\Sigma,\mu)$, 
then it is immediate that the map map $\lambda\colon L_0^+(\mu)\to[0,\infty]$ given by
\[
\lambda(f)=\inf\left\{ \left( \sum_{j=1}^n \rho^p (f_j) \right)^{1/p} \colon n\in\NN,\ f_j\in L_0^+(\mu), \ f=\sum_{j=1}^n f_j \right\}
\]
is a function $p$-norm equivalent to $\rho$. 
\end{proof}

\begin{corollary}\label{cor:AOR}Any function quasi-norm is equivalent to a function $p$-norm for some $0<p\le 1$.
\end{corollary}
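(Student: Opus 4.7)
The plan is a two-line combination of the two immediately preceding results, and there is really no genuine obstacle. First I apply Proposition~\ref{prop:AOFQN} (the Aoki--Rolewicz theorem for function quasi-norms) to the given function quasi-norm $\rho$: this produces some $0<p\le 1$ (explicitly, any $p$ with $2^{1/p-1}=\kappa$, where $\kappa$ is the modulus of concavity of $\rho$) such that $\rho$ is $p$-convex.

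Second, I invoke Lemma~\ref{lem:2}, which states precisely that $p$-convexity of a function quasi-norm is equivalent to its being equivalent to a function $p$-norm. The forward implication then gives a function $p$-norm equivalent to $\rho$ — namely the one constructed in the proof of Lemma~\ref{lem:2},
\[
\lambda(f)=\inf\left\{\Bigl(\sum_{j=1}^n \rho^p(f_j)\Bigr)^{1/p} : n\in\NN,\ f_j\in L_0^+(\mu),\ f=\sum_{j=1}^n f_j\right\},
\]
which we may cite verbatim.

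The genuine work is hidden in Proposition~\ref{prop:AOFQN} (which mirrors the classical Aoki--Rolewicz argument at the level of function quasi-norms) and in Lemma~\ref{lem:2} (which requires checking that $\lambda$ satisfies \ref{FQN:H}--\ref{FQN:CM} as well as \ref{FQN:pSA}); once these are in hand, the corollary is a pure chaining. It is worth emphasizing in the write-up that no additional hypothesis — Fatou property, local convexity, or the condition \ref{FQN:LC} of Bennett--Sharpley — is needed, so the conclusion applies in the full generality adopted at the start of the section. The expected write-up is therefore one or two sentences: ``By Proposition~\ref{prop:AOFQN}, $\rho$ is $p$-convex for some $0<p\le 1$; by Lemma~\ref{lem:2}, $\rho$ is then equivalent to a function $p$-norm.''
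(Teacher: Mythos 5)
Your proposal is correct and coincides exactly with the paper's own proof, which simply chains Proposition~\ref{prop:AOFQN} with Lemma~\ref{lem:2}. Nothing further is needed.
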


\begin{proof}
It follows from Proposition~\ref{prop:AOFQN} and Lemma~\ref{lem:2}.
\end{proof}

In light of Corollary~\ref{cor:AOR}, it is natural, and convenient in some situations, to restrict ourselves to function quasi-norms that are function $p$-norms for some $p$. However, we emphasize that some $p$-convex spaces arising naturally in Mathematical Analysis are given by a function quasi-norm that is not a  $p$-norm. Take, for instance the $1$-convex (i.e., locally convex) function space $L_{r,\infty}$, $r>1$. So, when working in the general framework of non-locally convex spaces, it is convenient to know whether a given property pass to equivalent function quasi-norms.

\begin{definition}
Let $\rho$ be a function quasi-norm over a $\sigma$-finite measure space $(\Omega,\Sigma,\mu)$, and let $\XX$ be a quasi-Banach space. The space
\[
\LL_\rho(\XX)=\{f\in L_0(\mu,\XX) \colon \Vert f\Vert_\rho:= \rho(\Vert f\Vert)<\infty\}.
\]
endowed with the gauge $\Vert \cdot\Vert_\rho$ will be called the \emph{vector-valued K\"othe space} associated with $\rho$ and $\XX$.
The space $\LL_\rho=\LL_\rho(\FF)$ will be called the \emph{K\"othe space} associated with $\rho$.
\end{definition}
Note that we do not impose the functions in $\LL_\rho(\XX)$ to be strongly measurable. If $\rho$ is the function quasi-norm associated to the Lebesgue space $L_p(\mu)$, $0<p<\infty$, we set $L_p(\mu,\XX):=\LL_\rho(\XX)$. If $A\in\Sigma$, we set $L_p(A,\mu,\XX):=L_p(\mu|_A,\XX)$, where $\mu|_A$ is the restriction of $\mu$ to $\Sigma\cap\Pt(A)$. In general, if $\rho|_A$ is the function quasi-norm defined by $\rho|_A(f)=\rho(\tilde{f})$, where
\[
\tilde{f}(\omega)=\begin{cases} f(\omega) & \text{ if } \omega\in A, \\ 0 & \text{ otherwise,} \end{cases}
\]
we set $\LL_\rho(A,\XX)=\LL_{\rho|_A}(\XX)$.

It is clear that $\LL_\rho$ is an order ideal in $L_0(\mu)$. By Lemma~\ref{lem1}~\ref{lem1:2}, its cone is given by
\[
\LL_\rho^+=\{f\in L_0^+(\mu) \colon \rho(f)<\infty\}.
\]

\begin{lemma}\label{lem2}
Let $\rho$ be a function quasi-norm over a $\sigma$-finite measure space $(\Omega,\Sigma,\mu)$ and $\XX$ be a quasi-Banach space.
\begin{enumerate}[label=(\roman*),leftmargin=*,widest=iii]
\item\label{lem2:1} $\LL_\rho(\XX)$ is a quasi-normed space. 
\item\label{lem2:2} $\Sp(\mu,\XX) \subseteq\LL_\rho(\XX)$.
\item\label{lem2:3} If we endow $L_0(\mu,\XX)$ with the vector topology of the local convergence in measure, then $\LL_\rho(\XX)\subseteq L_0(\mu,\XX)$ continuously.
\item\label{lem2:4} If $\mathcal{K}$ is a closed subset of $\XX$, then $\LL_\rho(\mathcal{K}):=\{ f\in\LL_\rho(\XX) \colon f(\omega)\in \mathcal{K} \text{ a.e.\ } \omega\in\Omega\}$ is closed in $\LL_\rho(\XX)$.
\end{enumerate}
\end{lemma}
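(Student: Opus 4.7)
The plan is to verify the four statements in order, treating \ref{lem2:1}--\ref{lem2:3} as routine consequences of the axioms combined with Lemma~\ref{lem1}, and then using \ref{lem2:3} as the key input for \ref{lem2:4}. For \ref{lem2:1} I would check the three quasi-norm axioms for $\Vert f\Vert_\rho := \rho(\Vert f(\cdot)\Vert_\XX)$. Non-degeneracy \ref{it:H} follows from Lemma~\ref{lem1}~\ref{lem1:1}: if $\rho(\Vert f\Vert_\XX)=0$, then $\Vert f\Vert_\XX=0$ a.e., whence $f=0$ a.e. Homogeneity \ref{it:Hom} is immediate from \ref{FQN:H}. For the quasi-triangle inequality \ref{it:modconcavity}, chain the pointwise estimate $\Vert f+g\Vert_\XX \le \kappa_\XX (\Vert f\Vert_\XX + \Vert g\Vert_\XX)$ with \ref{FQN:M}, \ref{FQN:H}, and \ref{FQN:QSA}, yielding a modulus of concavity for $\Vert\cdot\Vert_\rho$ bounded by $\kappa_\XX\kappa_\rho$.

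For \ref{lem2:2}, since $\Sp(\mu,\XX)$ is by definition the linear span of the generators $x\chi_E$ with $x\in\XX$ and $E\in\Sigma(\mu)$, and \ref{lem2:1} makes $\LL_\rho(\XX)$ a vector space, it suffices to observe that $\Vert x\chi_E\Vert_\rho=\Vert x\Vert_\XX\,\rho(\chi_E)$ is finite, which follows from \ref{FQN:H} and \ref{FQN:SF}. For \ref{lem2:3}, recall that a basis of neighborhoods of $0$ for the local convergence in measure topology on $L_0(\mu,\XX)$ is given by
\[
U(E,s,\varepsilon)=\{g\in L_0(\mu,\XX) \colon \mu(\{\omega\in E \colon \Vert g(\omega)\Vert_\XX > s\})\le\varepsilon\},
\]
with $E\in\Sigma(\mu)$ and $s,\varepsilon>0$. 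Applying Lemma~\ref{lem1}~\ref{lem1:4} to $\Vert f\Vert_\XX\in L_0^+(\mu)$ furnishes a $\delta>0$ such that every $f\in\LL_\rho(\XX)$ with $\Vert f\Vert_\rho\le\delta$ lies in $U(E,s,\varepsilon)$; translation invariance then upgrades continuity at $0$ to continuity everywhere.

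For \ref{lem2:4}, let $(f_n)_{n=1}^\infty$ be a sequence in $\LL_\rho(\mathcal{K})$ converging in $\LL_\rho(\XX)$ to some $f$. By \ref{lem2:3}, $f_n\to f$ locally in measure. A diagonal extraction over an exhaustion $\Omega=\bigcup_k\Omega_k$ with $\Omega_k\in\Sigma(\mu)$ yields a subsequence $(f_{n_j})$ converging to $f$ pointwise $\mu$-a.e. Since each $f_{n_j}(\omega)\in\mathcal{K}$ a.e.\ and $\mathcal{K}$ is closed in $\XX$, the limit satisfies $f(\omega)\in\mathcal{K}$ a.e., so $f\in\LL_\rho(\mathcal{K})$. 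The only step that goes beyond pure axiom-bookkeeping is this extraction of an a.e.-convergent subsequence from local convergence in measure; the main (mild) delicacy is invoking $\sigma$-finiteness carefully so that the diagonalization pins down a single exceptional null set valid on all of $\Omega$, but this is the standard argument.
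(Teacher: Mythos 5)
Your proof is correct. For parts \ref{lem2:1}--\ref{lem2:3} the paper simply declares them straightforward from the definitions and Lemma~\ref{lem1}, and your verifications are exactly the intended ones. For part \ref{lem2:4}, however, you take a genuinely different route. The paper argues by separation: given $x\in\LL_\rho(\XX)\setminus\LL_\rho(\mathcal{K})$, it produces $\varepsilon>0$ and a set $A$ of positive measure on which $\Vert x(\omega)-k\Vert\ge\varepsilon$ for every $k\in\mathcal{K}$, whence $\Vert x-y\Vert_\rho\ge\varepsilon\rho(\chi_A)>0$ for all $y\in\LL_\rho(\mathcal{K})$; this shows every point outside $\LL_\rho(\mathcal{K})$ lies at positive distance from it, uses only the positivity of $\rho(\chi_A)$ coming from \ref{FQN:CM}, and bypasses part \ref{lem2:3} entirely. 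You instead prove sequential closedness (which suffices since $\LL_\rho(\XX)$ is metrizable by \ref{lem2:1}) by extracting an a.e.-convergent subsequence from convergence locally in measure --- essentially anticipating the argument of Proposition~\ref{prop21}, which the paper states immediately afterwards and which depends only on \ref{lem2:3}, so there is no circularity. Your route is the more standard measure-theoretic one and avoids the measurability point the paper glosses over (namely that $\{\omega\colon \Vert x(\omega)-k\Vert\ge 1/n \ \forall k\in\mathcal{K}\}$ is measurable, needed to find the set $A$); the paper's route is shorter, quantitative (it gives an explicit lower bound on the distance to $\LL_\rho(\mathcal{K})$), and independent of the diagonal extraction. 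Both are complete proofs.
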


\begin{proof}
Statements \ref{lem2:1}, \ref{lem2:2}, and \ref{lem2:3} are straightforward from the very definition of function quasi-norm and Lemma~\ref{lem1}. Now let $\mathcal{K}$ be a closed subset of $\XX$, and let $x$ be a function in $\LL_\rho(\XX)\setminus\LL_\rho(\mathcal{K})$ (assuming that this set is non-empty). There is $\varepsilon>0$ and $A\subseteq\Sigma$ with $\mu(A)>0$ such that $\Vert x(a)-k\Vert \ge \varepsilon$ for all $a\in A$ and all $k\in\mathcal{K}$. Therefore $\Vert x-y\Vert_\rho\ge\varepsilon\rho(\chi_A)>0$ for all $y\in \LL_\rho(\mathcal{K})$, and we obtain \ref{lem2:4}. 
\end{proof}

\begin{lemma}\label{lem:ConABs}
Let $\rho$ be a function quasi-norm, and let $\XX$ be a Banach space. If a sequence $(x_n)_{n=1}^\infty$ converges to $x$ in $\LL_\rho(\XX)$, then $(\lVert x_n \rVert)_{n=1}^\infty$ converges to $\lVert x \rVert$ in $\LL_\rho$.
\end{lemma}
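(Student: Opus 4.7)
The statement reduces to a pointwise inequality followed by a single application of monotonicity of the function quasi-norm. Since $\XX$ is a Banach space, its norm satisfies the reverse triangle inequality with constant $1$: for every $\omega\in\Omega$,
\[
\bigl|\lVert x_n(\omega)\rVert - \lVert x(\omega)\rVert\bigr| \le \lVert x_n(\omega)-x(\omega)\rVert.
\]
(This is precisely where the hypothesis that $\XX$ be a genuine Banach space, and not merely quasi-Banach, enters; in a quasi-Banach space the reverse triangle inequality degrades by the modulus of concavity and the argument does not go through as cleanly.)

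From this pointwise bound and the monotonicity property \ref{FQN:M} of $\rho$ applied to the nonnegative measurable functions $|\lVert x_n\rVert-\lVert x\rVert|$ and $\lVert x_n-x\rVert$, I would conclude
\[
\bigl\lVert \lVert x_n\rVert - \lVert x\rVert \bigr\rVert_\rho
= \rho\bigl(\bigl|\lVert x_n\rVert - \lVert x\rVert\bigr|\bigr)
\le \rho\bigl(\lVert x_n-x\rVert\bigr)
= \lVert x_n - x\rVert_\rho.
\]
One small technical point to mention is that $\lvert \lVert x_n\rVert - \lVert x\rVert\rvert$ and $\lVert x_n-x\rVert$ lie in $L_0^+(\mu)$ (they are measurable because $x_n, x\in L_0(\mu,\XX)$ and the norm on $\XX$ is continuous), so $\rho$ is well-defined on them. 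Since $x_n\to x$ in $\mathcal{L}_\rho(\XX)$ means $\lVert x_n-x\rVert_\rho\to 0$, the displayed inequality yields $\lVert x_n\rVert \to \lVert x\rVert$ in $\mathcal{L}_\rho$.

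There is no real obstacle here; the content of the lemma is just the interplay between the reverse triangle inequality in a Banach space and monotonicity of $\rho$. The proof is one display long.
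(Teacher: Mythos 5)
Your proof is correct and is exactly the paper's argument: the authors likewise deduce the lemma from the pointwise inequality $\lvert\,\lVert x_n\rVert-\lVert x\rVert\,\rvert\le\lVert x_n-x\rVert$ together with monotonicity of $\rho$. Nothing further is needed.
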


\begin{proof}
It follows from the inequality $\lvert \lVert x_n\rVert -\lVert x\Vert \rvert\le \lVert x_n-x\rVert$ for all $n\in\NN$.
\end{proof}

\begin{proposition}\label{prop21}
Let $\rho$ be a function quasi-norm over a $\sigma$-finite measure space 
$(\Omega,\Sigma,\mu)$, let $\XX$ be a quasi-Banach space, and  
let $(x_n)_{n=1}^\infty$ be a sequence in $L_0(\mu,\XX)$ 
such that $\lim_n \Vert x_n-x\Vert_\rho=0$ for some $x\in L_0(\mu,\XX)$. 
Then, there is a subsequence
$(y_n)_{n=1}^\infty$ of $(x_n)_{n=1}^\infty$ such that $\lim_n y_n=x$ a.e.
\end{proposition}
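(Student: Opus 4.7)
The plan is to deduce from $\rho$-convergence the local convergence in measure of $\|x_n-x\|$, and then to extract an almost everywhere convergent subsequence by a standard Borel--Cantelli plus diagonal argument.

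First, I would invoke Lemma~\ref{lem1}~\ref{lem1:4} applied to the nonnegative scalar functions $g_n:=\|x_n-x\|\in L_0^+(\mu)$: for every $E\in\Sigma(\mu)$, every $s>0$, and every $\varepsilon>0$, there exists $\delta>0$ such that $\rho(g_n)\le\delta$ implies $\mu(\{\omega\in E\colon g_n(\omega)>s\})\le\varepsilon$. Since $\|x_n-x\|_\rho=\rho(g_n)\to 0$, this shows that for every $E\in\Sigma(\mu)$ and every $s>0$,
\[
\lim_{n\to\infty}\mu(\{\omega\in E\colon \|x_n(\omega)-x(\omega)\|>s\})=0.
\]
(Equivalently, by Lemma~\ref{lem2}~\ref{lem2:3}, $x_n\to x$ locally in measure in $L_0(\mu,\XX)$; one may appeal to that lemma directly, but the pointwise estimate above is what is actually needed.)

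Next, using $\sigma$-finiteness, write $\Omega=\bigcup_{k=1}^\infty E_k$ with $E_k\in\Sigma(\mu)$ and $E_k\subseteq E_{k+1}$. By the convergence just established and a diagonal argument, one can inductively choose a strictly increasing sequence of indices $(n_j)_{j=1}^\infty$ such that, for every $j\in\NN$ and every $k\in\{1,\dots,j\}$,
\[
\mu\bigl(\bigl\{\omega\in E_k\colon \|x_{n_j}(\omega)-x(\omega)\|>2^{-j}\bigr\}\bigr)<2^{-j}.
\]
Set $y_j:=x_{n_j}$.

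Finally, fix $k$ and, for $j\ge k$, let $A_j^k:=\{\omega\in E_k\colon\|y_j(\omega)-x(\omega)\|>2^{-j}\}$. Then $\sum_{j\ge k}\mu(A_j^k)<\infty$, so by the Borel--Cantelli lemma $\mu(\limsup_j A_j^k)=0$. For almost every $\omega\in E_k$ we therefore have $\|y_j(\omega)-x(\omega)\|\le 2^{-j}$ for all sufficiently large $j$, hence $y_j(\omega)\to x(\omega)$. Since $\Omega=\bigcup_k E_k$ is a countable union, the exceptional sets combine into a single null set and $y_j\to x$ a.e.\ on $\Omega$.

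There is no genuine obstacle: the only delicate point is making the diagonal extraction work simultaneously for all the sets $E_k$ in the exhaustion, which is handled by requiring the condition above to hold for all $k\le j$ at stage $j$. Everything else follows from the standard Borel--Cantelli argument together with the condition~\ref{FQN:CM} built into the definition of a function quasi-norm and exploited in Lemma~\ref{lem1}~\ref{lem1:4}.
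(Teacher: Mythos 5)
Your proof is correct and follows essentially the same route as the paper: both reduce the hypothesis $\lVert x_n-x\rVert_\rho\to 0$ to local convergence in measure (the paper via Lemma~\ref{lem2}~\ref{lem2:3}, you via the underlying Lemma~\ref{lem1}~\ref{lem1:4}) and then extract an a.e.\ convergent subsequence by a diagonal argument over an exhaustion by finite-measure sets. Your version merely makes the standard extraction explicit through Borel--Cantelli, which is a matter of presentation rather than a different method.
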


\begin{proof}
Let $(A_j)_{j=1}^\infty$ be an increasing sequence of finite-measure sets such that $x_n$ is null outside $A=\cup_{j=1}^\infty A_j$ for all $n\in\NN$. Then $\rho(\Vert x\Vert\chi_{\Omega\setminus A})=0$. Therefore $x(\omega)=0$ a.e.\ $\omega\in\Omega\setminus A$. By Lemma~\ref{lem2}~\ref{lem2:3}, for each $j\in\NN$ there is an increasing sequence $(n_k)_{k=1}^\infty$ such that $\lim_k x_{n_k}(\omega)=x(\omega)$ a.e.\ $\omega\in A_j$. The Cantor diagonal technique yields a subsequence $(y_n)_{n=1}^\infty$ of $(x_n)_{n=1}^\infty$ such that $\lim_n y_n(\omega)=x(\omega)$ a.e.\ $\omega\in A$.
\end{proof}

\subsection{The Fatou property}\label{sect:Fatou}
\begin{definition}\label{def:Fatou}
Let $\rho$ be a function quasi-norm over a $\sigma$-finite measure space $(\Omega,\Sigma,\mu)$. We say that $\rho$ has the \emph{rough Fatou property} if there is a constant $C$ such that $\rho(\lim_n f_n)\le C \lim_n \rho(f_n)$ whenever $(f_n)_{n=1}^\infty$ is non-decreasing sequence in $L_0^+(\mu)$. If the above holds with $C=1$ we say that $\rho$ has the \emph{Fatou property}. We say that $\rho$ has the \emph{weak Fatou property} if $\rho(\lim_n f_n)<\infty$ whenever the non-decreasing sequence $(f_n)_{n=1}^\infty$ in $L_0^+(\mu)$ satisfies $\lim_n \rho(f_n)<\infty$.
\end{definition}

Fatou property does not pass to equivalent function quasi-norms. In contrast, both rough and weak Fatou property are preserved. In fact, these two notions are equivalent. 

\begin{proposition}[cf.\ \cite{Amemiya1953}*{Lemma}]\label{prop:roughweakFatou}
If $\rho$ is a function quasi-norm with the weak Fatou property, 
then it also has the rough Fatou property.
\end{proposition}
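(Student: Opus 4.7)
The plan is to argue by contradiction: assume rough Fatou fails, and combine a sequence of witnesses to produce a single non-decreasing sequence whose $\rho$-norms stay uniformly bounded but whose pointwise limit has infinite $\rho$-norm, thereby contradicting the weak Fatou property. First, using Corollary~\ref{cor:AOR} and the fact noted after Definition~\ref{def:Fatou} that both Fatou properties are invariant under equivalence of function quasi-norms, I would reduce to the case that $\rho$ itself is a function $p$-norm for some $0<p\le 1$.

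For the extraction, note first that if $(f^{(k)})_k$ is non-decreasing in $L_0^+(\mu)$ with $\lim_k\rho(f^{(k)})=0$, then \ref{FQN:M} forces $\rho(f^{(k)})=0$ for every $k$, so Lemma~\ref{lem1}~\ref{lem1:1} gives $f^{(k)}=0$ a.e.\ and $\rho(\lim_k f^{(k)})=0$. Hence failure of rough Fatou supplies, for each $n\in\NN$, a non-decreasing sequence $(f_n^{(k)})_k$ with $\lim_k\rho(f_n^{(k)})\in(0,\infty)$ and $\rho(\lim_k f_n^{(k)})>n\,2^{n/p}\,\lim_k\rho(f_n^{(k)})$. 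After dividing by $\lim_k\rho(f_n^{(k)})$, I may assume $\rho(f_n^{(k)})\le 1$ for every $n,k$ and $\rho(h_n)\ge n\,2^{n/p}$, where $h_n:=\lim_k f_n^{(k)}$. I would then set
\[
G_N:=\sum_{n=1}^N 2^{-n/p}\,f_n^{(N)},\qquad N\in\NN.
\]
Non-negativity of the summands together with the monotonicity in $k$ of each $(f_n^{(k)})_k$ yields that $(G_N)_N$ is non-decreasing, while the function $p$-norm inequality gives $\rho^p(G_N)\le \sum_{n=1}^N 2^{-n}\rho^p(f_n^{(N)})\le 1$.

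To conclude, for any $M\le N$ one has $G_N\ge 2^{-M/p}f_M^{(N)}$ pointwise; letting $N\to\infty$ and using $f_M^{(N)}\nearrow h_M$ gives $\lim_N G_N\ge 2^{-M/p}h_M$, so \ref{FQN:H} and \ref{FQN:M} yield $\rho(\lim_N G_N)\ge 2^{-M/p}\rho(h_M)\ge M$ for every $M\in\NN$, hence $\rho(\lim_N G_N)=\infty$. This contradicts the weak Fatou property applied to $(G_N)_N$, whose $\rho$-norms are bounded by $1$. The main obstacle is the simultaneous calibration of the weights $2^{-n/p}$ and of the growth rate $n\,2^{n/p}$ imposed on $\rho(h_n)$: the former must keep the $G_N$ uniformly $\rho$-bounded via $p$-convexity, while the latter must still overwhelm the decay $2^{-n/p}$ in the lower estimate for $\rho(\lim_N G_N)$. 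This calibration is exactly what makes the preliminary Aoki-Rolewicz reduction to a $p$-norm essential.
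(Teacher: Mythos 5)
Your proof is correct and follows essentially the same route as the paper's: reduce to a function $p$-norm via Corollary~\ref{cor:AOR}, extract normalized witnesses to the failure of the rough Fatou property, and combine them into the single non-decreasing sequence $G_N=\sum_{n=1}^N 2^{-n/p}f_n^{(N)}$ whose norms stay bounded while $\rho(\lim_N G_N)=\infty$. The only differences are cosmetic (your growth rate $n\,2^{n/p}$ versus the paper's $2^{2k/p}$) plus your explicit treatment of the degenerate case $\lim_k\rho(f^{(k)})=0$, which the paper leaves implicit in its normalization.
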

\begin{proof}
Let $\rho$ be a function quasi-norm over a $\sigma$-finite measure space 
$(\Omega,\Sigma,\mu)$. 
By Corollary~\ref{cor:AOR}, we can assume without loss of generality that it is a function $p$-norm for some $0<p\le 1$. Suppose that $\rho$ does not have the rough Fatou property. Then, for each $k\in\NN$ there is a non-decreasing sequence $(f_{k,n})_{n=1}^\infty$ in $L_0^+(\mu)$ with $\sup_n \rho(f_{k,n})\le 1$ and $\rho(\lim_n f_{k,n})>2^{2k/p}$. The sequence $(g_n)_{n=1}^\infty$ defined by
\[
g_n=\sum_{k=1}^n 2^{-k/p} f_{k,n},\quad n\in\NN,
\]
is non-decreasing, and we have
\[
2^{-k/p} f_{k,n} \le g:=\lim_n g_n, \quad k\le n.
\]
Then $\rho(g)\ge 2^{-k/p}\rho(\lim_n f_{k,n})>2^{k/p}$ for all $k\in\NN$. 
That is, $\rho(g)=\infty$. On the other hand, since $\rho$ is a function $p$-norm, $\rho^p(g_n) \le \sum_{k=1}^n 2^{-k}\le 1$ for all $n\in\NN$. Therefore $\rho$ does not have the weak Fatou property.
\end{proof}

\begin{proposition}[cf.\ \cite{BennettSharpley1988}*{Theorem 1.8}]
Let $\lambda$ and $\rho$ be two function quasi-norms over the same $\sigma$-finite measure space. Suppose that $\rho$ has the weak Fatou property. Then $\rho$ dominates $\lambda$ if and only if $\LL_\rho^+\subseteq \LL_\lambda^+$.
\end{proposition}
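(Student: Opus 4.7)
The easy direction is direct: if $\rho$ dominates $\lambda$ with constant $C$, and $f\in \LL_\rho^+$, then $\lambda(f)\le C\rho(f)<\infty$, so $f\in\LL_\rho^+\subseteq\LL_\lambda^+$. All the work is in the converse.

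For the nontrivial direction, the plan is a contradiction argument in the spirit of Proposition~\ref{prop:roughweakFatou}. By Corollary~\ref{cor:AOR} I may assume without loss of generality that $\rho$ is a function $p$-norm for some $0<p\le 1$. Suppose $\rho$ does not dominate $\lambda$; then for every $k\in\NN$ there is some $f\in L_0^+(\mu)$ with $\lambda(f)>2^{2k/p}\rho(f)$. Since $\LL_\rho^+\subseteq\LL_\lambda^+$, any such witness must have $\rho(f)<\infty$, and the cases $\rho(f)=0$ (which forces $f=0$ a.e.\ and hence $\lambda(f)=0$ by Lemma~\ref{lem1}~\ref{lem1:1}) and $\rho(f)=\infty$ are both ruled out. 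So I may rescale and produce a sequence $(f_k)_{k=1}^\infty$ in $L_0^+(\mu)$ with
\[
\rho(f_k)=2^{-k/p},\qquad \lambda(f_k)>2^{k/p},\qquad k\in\NN.
\]

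Now I telescope as in the proof of Proposition~\ref{prop:roughweakFatou}. Set $g_n=\sum_{k=1}^n f_k$, a non-decreasing sequence in $L_0^+(\mu)$. Because $\rho$ is a function $p$-norm,
\[
\rho^p(g_n)\le \sum_{k=1}^n \rho^p(f_k)=\sum_{k=1}^n 2^{-k}\le 1,
\]
so $\sup_n \rho(g_n)\le 1$. The weak Fatou property then yields $\rho(g)<\infty$, where $g:=\lim_n g_n=\sum_{k=1}^\infty f_k$. Hence $g\in\LL_\rho^+\subseteq\LL_\lambda^+$, i.e.\ $\lambda(g)<\infty$. But $f_k\le g$ pointwise, so by \ref{FQN:M} we get $\lambda(f_k)\le\lambda(g)$ for every $k$, contradicting $\lambda(f_k)>2^{k/p}\to\infty$.

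The only subtle point is the preparation of the witnesses $f_k$: I need each $f_k$ to lie in $\LL_\rho^+$ (so the normalization $\rho(f_k)=2^{-k/p}$ makes sense) and I need $\lambda(f_k)$ to be a well-defined finite number that is nonetheless much larger than $\rho(f_k)$. Both are enforced by the hypothesis $\LL_\rho^+\subseteq\LL_\lambda^+$ together with Lemma~\ref{lem1}~\ref{lem1:1}, as described above. Once these are in place the proof is a direct transcription of the weak-Fatou telescoping trick, and the role of the weak Fatou property is precisely to convert a uniform $\rho$-bound on the partial sums into finiteness of $\rho(g)$, which is what triggers the hypothesis $\LL_\rho^+\subseteq\LL_\lambda^+$ and closes the argument.
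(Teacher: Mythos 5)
Your proof is correct and follows essentially the same route as the paper: both arguments take normalized witnesses of the failure of domination, sum them into a single non-decreasing limit whose $\rho$-quasi-norm is controlled via the (weak/rough) Fatou property, and then derive a contradiction from the blow-up of $\lambda$ on the summands. The only cosmetic difference is that you reduce to a function $p$-norm via Corollary~\ref{cor:AOR} and apply the weak Fatou property directly to the partial sums, whereas the paper keeps $\rho$ as is and invokes the rough Fatou property (Proposition~\ref{prop:roughweakFatou}) together with Proposition~\ref{prop:AOFQN}.
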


\begin{proof}
The direct implication is obvious.
Suppose now that $\rho$ does not dominate $\lambda$. 
Then there is a sequence $(f_n)_{n=1}^\infty$ in $L_0^+(\mu)$ such that $4^{n} \rho(f_n)<\lambda(f_n)$ for all $n\in\NN$. Set
\[
f=\sum_{n=1}^\infty \frac{2^{-n}} {\rho(f_n)} f_n.
\]
Using that $\rho$ has the rough Fatou property (due to Proposition~\ref{prop:roughweakFatou}) and Proposition~\ref{prop:AOFQN}, we obtain that $\rho(f)<\infty$. Since
\[
\lambda(f)\ge \sup_n \frac{ 2^{-n} \lambda(f_n) }{ \rho(f_n) } \ge \sup_n 2^n =\infty, 
\]
the space $\LL_\rho^+$ is not contained in $\LL_\lambda^+$. 
\end{proof}

\begin{definition}
Let $0<p\le 1$ and let $\rho$ be a function quasi-norm over a $\sigma$-finite measure space $(\Omega,\Sigma,\mu)$. We say that $\rho$ has the \emph{Riesz-Fischer $p$-property} if for every sequence $(f_n)_{n=1}^\infty$ in $L_0^+(\mu)$ with $\sum_{n=1}^\infty \rho^p(f_n)<\infty$ we have $\rho(\sum_{n=1}^\infty f_n)<\infty$.
\end{definition}

\begin{lemma}[cf.\ \cite{Amemiya1953}*{Theorem}]
Let $\rho$ be a $p$-convex function quasi-norm with the weak Fatou property. Then $\rho$ has the Riesz-Fischer $p$-property.
\end{lemma}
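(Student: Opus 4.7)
The proof is a direct combination of $p$-convexity with the weak Fatou property. The plan is to form the partial sums of the series $\sum_{n=1}^\infty f_n$, bound their $\rho$-values uniformly using $p$-convexity, and then invoke the weak Fatou property to transfer the bound to the limit.

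More concretely, first I would set $g_N = \sum_{n=1}^N f_n$ for each $N \in \NN$. Since each $f_n$ is in $L_0^+(\mu)$, the sequence $(g_N)_{N=1}^\infty$ is non-decreasing. Applying the $p$-convexity constant $C$ of $\rho$, we get
\[
\rho^p(g_N) \le C \sum_{n=1}^N \rho^p(f_n) \le C \sum_{n=1}^\infty \rho^p(f_n),
\]
and the right-hand side is finite by hypothesis. In particular, the non-decreasing scalar sequence $(\rho(g_N))_{N=1}^\infty$ (monotone by \ref{FQN:M}) has $\lim_N \rho(g_N) < \infty$.

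Now I invoke the weak Fatou property: since $(g_N)$ is non-decreasing in $L_0^+(\mu)$ and $\lim_N \rho(g_N)$ is finite, we conclude that $\rho(\lim_N g_N) < \infty$. But $\lim_N g_N = \sum_{n=1}^\infty f_n$ pointwise, so $\rho(\sum_{n=1}^\infty f_n) < \infty$, as required.

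There is no real obstacle in this argument; the only subtlety is that weak Fatou is formulated for non-decreasing sequences in $L_0^+(\mu)$, which is precisely the situation of the partial sums, so the hypothesis fits hand in glove with $p$-convexity. The proof uses neither rearrangement invariance nor the stronger rough Fatou property (although by Proposition~\ref{prop:roughweakFatou} one could use it for free), keeping the argument at the level of the defining conditions.
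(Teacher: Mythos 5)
Your proof is correct and is essentially identical to the paper's: both form the partial sums, bound $\rho^p$ of the partial sums uniformly by $C\sum_{n=1}^\infty\rho^p(f_n)$ via $p$-convexity, and then apply the weak Fatou property to the non-decreasing sequence of partial sums. Your additional remark that $(\rho(g_N))_{N=1}^\infty$ is monotone by \ref{FQN:M} (so its limit exists) is a fair, if minor, point of extra care.
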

\begin{proof} Let $(f_n)_{n=1}^\infty$ be a sequence in $L_0^+(\mu)$ with $A:=\sum_{n=1}^\infty \rho^p(f_n)<\infty$. If $C$ denotes the $p$-convexity constant of $\rho$, then 
\[
\rho\left(\sum_{n=1}^m f_n \right)\le C^{1/p} \left(\sum_{n=1}^m \rho^p(f_n)\right)^{1/p} \le C^{1/p}A^{1/p}, \quad m\in\NN.
\]
Hence $\lim_m\rho(\sum_{n=1}^m f_n)<\infty$, and therefore $\rho(\sum_{n=1}^\infty f_n)<\infty$ (since $\rho$ has the weak Fatou property). That is, $\rho$ has the Riesz-Fischer $p$-property.
\end{proof}

\begin{proposition}
Let $0<p\le 1$ and let $\rho$ be a function quasi-norm over a $\sigma$-finite measure space $(\Omega,\Sigma,\mu)$. The following are equivalent.
\begin{enumerate}[label=(\roman*),leftmargin=*,widest=iii]
\item\label{prop:RFC:1} $\rho$ has the Riesz-Fischer $p$-property.
\item\label{prop:RFC:2} There is a constant $C$ such that $\rho^p(\sum_{n=1}^\infty f_n)\le C \sum_{n=1}^\infty \rho^p (f_n)$ for every sequence $(f_n)_{n=1}^\infty$ in $L_0^+(\mu)$.
\item\label{prop:RFC:4} $\LL_\rho(\XX)$ is a quasi-Banach space for any (resp.\ some) nonzero quasi-Banach space $\XX$.
\item\label{prop:RFC:3} $\LL_\rho$ is a $p$-convex quasi-Banach space. 
\end{enumerate}
Moreover, the optimal constant in \ref{prop:RFC:2} is the $p$-convexity constant of $\LL_\rho$. In particular, $\LL_\rho$ is a $p$-Banach space if and only if \ref{prop:RFC:2} holds with $C=1$.
\end{proposition}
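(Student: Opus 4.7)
The implication (ii)$\Rightarrow$(i) is immediate. For (i)$\Rightarrow$(ii) I would argue by contradiction via a gliding hump, in the spirit of Proposition~\ref{prop:roughweakFatou}: if (ii) fails, for each $k\in\NN$ there is a finite sequence $(f_{k,j})_{j=1}^{n_k}$ in $L_0^+(\mu)$ with $\sum_j\rho^p(f_{k,j})\le 1$ and $\rho^p(\sum_j f_{k,j})>4^k$. Rescaling the $k$-th block by $2^{-k/p}$ and concatenating the rescaled blocks into a single sequence $(g_m)$ gives $\sum_m\rho^p(g_m)\le\sum_k 2^{-k}<\infty$, while monotonicity of $\rho$ applied block by block forces $\rho(\sum_m g_m)\ge 2^{k/p}$ for every $k$, contradicting (i).

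Next, (ii)$\Leftrightarrow$(iv). Applied to finite sums, (ii) is exactly $p$-convexity of $\rho$. For completeness of $\LL_\rho$, given $(f_n)$ in $\LL_\rho$ with $\sum\|f_n\|_\rho^p<\infty$, (ii) yields $\sum_n|f_n|\in\LL_\rho^+$, so $\sum_n f_n$ converges absolutely a.e.\ to some $f\in\LL_\rho$, and the tail bound $\|f-\sum_{n\le N}f_n\|_\rho^p\le C\sum_{n>N}\|f_n\|_\rho^p\to 0$ gives norm convergence. Conversely, if $C$ is the $p$-convexity constant of $\LL_\rho$ and $\sum\rho^p(f_n)<\infty$, then the partial sums $g_N=\sum_{n\le N}f_n$ are Cauchy in $\LL_\rho$, converge in norm and (by Proposition~\ref{prop21}) a.e.\ to some $g\in\LL_\rho$ which monotonicity identifies with $\sum f_n$; passing the finite-sum bound $\rho^p(g_N)\le C\sum_{n\le N}\rho^p(f_n)$ to the limit via the equivalent $p$-norm of Lemma~\ref{lem:2}---in which $p$-th powers behave continuously under norm convergence---yields (ii) with the same sharp constant $C$, establishing the ``moreover''.

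The pattern for (ii)$\Leftrightarrow$(iii) is analogous. Assuming (ii), the scalar completeness argument transfers to $\LL_\rho(\XX)$ for any quasi-Banach $\XX$, once pointwise norm-summability of $(x_n(\omega))$ is upgraded to a.e.\ convergence of $\sum x_n(\omega)$ in $\XX$; this uses that $\XX$ is $q$-Banach for some $q>0$ by Aoki-Rolewicz, applied to a sufficiently rapidly-decreasing subsequence. Conversely, given (iii) for some nonzero quasi-Banach $\XX$, fix $x_0\in\XX$ with $\|x_0\|=1$; the map $f\mapsto fx_0$ is a linear isometry from $\LL_\rho$ onto $\LL_\rho(x_0\FF)$, closed in $\LL_\rho(\XX)$ by Lemma~\ref{lem2}~\ref{lem2:4}, so $\LL_\rho$ inherits completeness, and the $p$-convex inequality restricts through the same embedding.

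The main obstacle I anticipate is the constant bookkeeping in (iv)$\Rightarrow$(ii): a naive application of $p$-convexity to $\rho^p(g-g_N+g_N)$ inflates $C$ by an extra factor of $C$, so matching the sharp constant requires carefully exploiting the $p$-normed renorming of Lemma~\ref{lem:2} (where $|\tilde\rho^p(g)-\tilde\rho^p(g_N)|\le\tilde\rho^p(g-g_N)$) together with the monotone structure $g_N\nearrow g$ to pass to the limit without losing constants. A secondary obstacle is ensuring, in the vector-valued step of (ii)$\Rightarrow$(iii), that the Aoki-Rolewicz $q$-Banach parameter of $\XX$ really suffices to guarantee a.e.\ convergence of $\sum x_n$ when only $\sum\|x_n(\omega)\|_\XX$ is known to be finite---which is why the extraction of an appropriate subsequence, tuned to $q$, is essential.
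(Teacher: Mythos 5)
Your overall architecture is essentially the paper's, reorganized: a concatenation-of-blocks contradiction for (i)$\Rightarrow$(ii), a.e.\ convergence of a dominating scalar series to get completeness of $\LL_\rho(\XX)$ out of (ii) (your ``geometrically decaying subsequence tuned to the Aoki--Rolewicz exponent'' is the same device as the paper's $\kappa^n$-weighting), and the closed one-dimensional-range subspace of Lemma~\ref{lem2}~\ref{lem2:4} to pull completeness back to $\LL_\rho$. The paper proves a single cycle (i)$\Rightarrow$(ii)$\Rightarrow$(iii)$\Rightarrow$(iv)$\Rightarrow$(i) rather than your two separate equivalences, but that difference is harmless.

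Three steps need repair. First, in (i)$\Rightarrow$(ii) you assert that failure of (ii) is witnessed by \emph{finite} sequences. Without a Fatou-type hypothesis you cannot truncate an infinite witness, since $\rho(\sum_{n\le N}f_n)$ need not approach $\rho(\sum_{n=1}^\infty f_n)$; the fix is simply to keep the infinite blocks and concatenate over $\NN^2$, as the paper does --- monotonicity still bounds $\rho$ of the total sum below by $\rho$ of each block. Second, your route to the sharp constant in the ``moreover'' through the equivalent $p$-norm of Lemma~\ref{lem:2} cannot work: equivalence of quasi-norms distorts constants, so continuity of the renormed gauge yields the inequality only for the renorming, not for $\rho$; and your fallback, the monotone structure $g_N\nearrow g$, bounds $\rho(g_N)$ by $\rho(g)$, which is the wrong direction absent the Fatou property. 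A clean fix is to apply the finite $p$-convexity inequality to the $(N+1)$-term decomposition $\sum_{n=1}^\infty f_n=f_1+\dots+f_N+\sum_{n>N}f_n$ and let $N\to\infty$, using that $\rho^p(\sum_{n>N}f_n)\to0$, which is already available from (ii) with some (possibly worse) constant. Third, in the converse direction from (iii), the phrase ``the $p$-convex inequality restricts through the same embedding'' is circular: hypothesis (iii) asserts only that $\LL_\rho(\XX)$ is complete and carries no $p$-convexity to restrict. Completeness of $\LL_\rho$ does transfer through your isometric embedding, but the convexity must be produced separately; the paper does this by invoking Aoki--Rolewicz at that point and then closing the implication cycle back through (i) and (ii).
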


\begin{proof}
Assume that \ref{prop:RFC:2} does not hold. Then for every $k\in\NN$ there is a sequence $(f_{k,n})_{n=1}^\infty$ in $L_0^+(\mu)$ such that
\[
\rho^p\left(\sum_{n=1}^\infty f_{k,n} \right)\ge k \text{ and } \sum_{n=1}^\infty \rho^p (f_{k,n}) \le 2^{-k}.
\]
Then $\sum_{(k,n)\in\NN^2} \rho^p (f_{k,n})\le 1$, and also
\[
\rho^p\left(\sum_{(k,n)\in\NN^2} f_{k,n}\right)\ge \rho^p\left(\sum_{n=1}^\infty f_{k,n} \right)\ge k
\]
for all $k\in\NN$. That is, $\rho(\sum_{(k,n)\in\NN^2} f_{k,n})=\infty$. So \ref{prop:RFC:1} does not hold. In other words, \ref{prop:RFC:1} implies \ref{prop:RFC:2}. 

Now assume \ref{prop:RFC:2}. 
Let $\XX$ be a nonzero quasi-Banach space with modulus of concavity $\kappa$. 
By Lemma~\ref{lem2}~\ref{lem2:1}, we already know that $\LL_\rho(\XX)$ is a 
quasi-normed space. Therefore, in order to obtain \ref{prop:RFC:4}, 
it suffices to prove that the series $\sum_{n=1}^{\infty} f_n$ converges in $\LL_\rho(\XX)$ 
for every sequence $(f_n)_{n=1}^{\infty}$ in $\LL_\rho(\XX)$ such that 
\begin{equation}\label{eq:kappaCauchy}
\sum_{n=1}^{\infty} \kappa^{np}\rho^p(\lVert f_n\rVert)<\infty.
\end{equation}
Using \ref{prop:RFC:2} and Lemma~\ref{lem1}~\ref{lem1:2}, we obtain that 
$\sum_{n=1}^{\infty} \kappa^n\lVert f_n\rVert$ converges a.e. in $\Omega$; 
say it converges in $\Omega\setminus\Nt$ where $\mu(\Nt)=0$. 
Set $g_n:=f_n\chi_{\Omega\setminus\Nt}$. 
Obviously $\lVert g_n\rVert\le\lVert f_n\rVert$, 
so $\rho(\lVert g_n\rVert)\le\rho(\lVert f_n\rVert)$ for all $n\in\NN$. 
Then \eqref{eq:kappaCauchy} is also true if we put $g_n$ instead of $f_n$. 

For all $M,N\in\NN$ with $M\ge N$, we have 
$\lVert \sum_{n=N}^{M} g_n\rVert\le \sum_{n=N}^{M} \kappa^n\lVert g_n\rVert$. 
Since $\sum_{n=1}^{\infty} \kappa^n\lVert g_n(t)\rVert$ converges for all $t\in\Omega$, 
$(\sum_{n=1}^{m} g_n(t))_{m=1}^{\infty}$ is a Cauchy sequence in $\XX$.  
Therefore $\sum_{n=1}^{\infty} g_n(t)=:f(t)$ converges for all $t\in\Omega$. 
Let us see that $\sum_{n=1}^{\infty} f_n$ converges to $f$ in $\LL_\rho(\XX)$. 

Notice that if a sequence $(x_n)_{n=1}^{\infty}$ converges to $x$ in $\XX$, 
since $\lVert x\rVert\le \kappa\lVert x_n\rVert + \kappa\lVert x-x_n\rVert$, 
we have $\lVert x\rVert\le \kappa\liminf_n\lVert x_n\rVert$. 
Recall that if two functions $u,v$ in $L_0^+(\mu)$ are equal a.e., 
then $\rho(u)=\rho(v)$. Hence
\begin{align*}
\rho\left(\left\lVert f-\sum_{n=1}^{m} f_n\right\rVert\right) 
&= \rho\left(\left\lVert f-\sum_{n=1}^{m} g_n\right\rVert\right) 
= \rho\left(\left\lVert \sum_{n=m+1}^{\infty} g_n\right\rVert\right)\\
&\le \rho\left(\kappa\liminf_{M\to\infty} \left\lVert \sum_{n=m+1}^{M} g_n\right\rVert \right)
\le \kappa\rho\left( \sum_{n=m+1}^{\infty} \kappa^n\lVert g_n\rVert\right)\\
&\le \kappa \left(\sum_{n=m+1}^{\infty} \kappa^{np}\rho^p(\lVert g_n\rVert)\right)^{1/p}
\xrightarrow[m\to\infty]{} 0. 
\end{align*}
Therefore, we have proved that \ref{prop:RFC:2} implies \ref{prop:RFC:4}. 

Suppose that $\LL_\rho(\XX)$ is a quasi-Banach space 
for some nonzero quasi-Banach space $\XX$. 
Take a nonzero vector $x$ in $\XX$. Since obviously $\FF$ is isomorphic to 
$\{tx\colon t\in\FF\}$, which is a closed subset of $\XX$, 
it follows that $\LL_\rho$ is a quasi-Banach space using Lemma~\ref{lem2}~\ref{lem2:4}. 
By the Aoki-Rolewicz theorem, $\LL_\rho$ is $p$-convex for some $0<p\le 1$. 
Hence \ref{prop:RFC:4} implies \ref{prop:RFC:3}.

Finally, assume that \ref{prop:RFC:3} holds. 
Let $(f_n)_{n=1}^\infty$ be a sequence in $L_0^+(\mu)$ such that 
$\sum_{n=1}^\infty \rho^p(f_n)<\infty$. 
Since $\LL_\rho$ is $p$-convex (with constant $C$), for all $M,N\in\NN$ with $M\ge N$ we have 
\[
\rho\left(\sum_{n=N}^{M} f_n \right) \le C^{1/p} \left(\sum_{n=N}^{M} \rho^p(f_n) \right)^{1/p}.
\]
Therefore $(\sum_{n=1}^{m} f_n)_{m=1}^{\infty}$ is a Cauchy sequence in 
the quasi-Banach space $\LL_\rho(\XX)$, so it converges to a function $f$ in $\LL_\rho(\XX)$. 
By Proposition~\ref{prop21}, there is a subsequence 
$(\sum_{n=1}^{m_j} f_n)_{j=1}^{\infty}$ that converges to $f$ a.e., 
say  in $\Omega\setminus\Nt$ where $\mu(\Nt)=0$. 
Since $(\sum_{n=1}^{m} f_n)_{m=1}^{\infty}$ is non-decreasing, 
it follows that it converges to $f$ in $\Omega\setminus\Nt$. 
That is, $\sum_{n=1}^{\infty} f_n=f$ a.e., 
and therefore $\rho(\sum_{n=1}^{\infty}f_n)=\rho(f)<\infty$. 
Hence \ref{prop:RFC:3} implies \ref{prop:RFC:1}. 
\end{proof}

\subsection{Absolute continuity and domination}\label{subsect:DC}
\begin{definition}
Let $\rho$ be a function quasi-norm over a $\sigma$-finite measure space $(\Omega,\Sigma,\mu)$. We say that $f\in \LL^+_\rho$ is \emph{absolutely continuous} with respect to $\rho$ if
\[
\textstyle
\lim_n \rho(f_n)=\rho(\lim_n f_n)
\]
for every non-increasing sequence $(f_n)_{n=1}^\infty$ in $L_0^+(\mu)$ with $f_1\le f$. If the above holds only in the case when $\lim_n f_n=0$, we say that $f$ is \emph{dominating}.
We denote by $\LL_{\rho}^{a}$ (resp.\ $\LL_{\rho}^{d}$) the set consisting of all $f\in L_0(\mu)$ such that $|f|$ is absolutely continuous (resp.\ dominating). We say that $\rho$ is \emph{absolutely continuous} (resp.\ dominating) if $\LL_\rho^a=\LL_\rho$ (resp.\ $\LL_\rho^d=\LL_\rho$). If $\chi_E\in \LL_{\rho}^{a}$ (resp.\ $\LL_{\rho}^{d}$) for every $E\in\Sigma(\mu)$, we say that $\rho$ is \emph{locally absolutely continuous} (resp.\ locally dominating).
\end{definition}

Notice that domination is preserved under equivalence of function quasi-norms, but absolute continuity is not. Propostion~\ref{prop:TCD} below yields that if the function quasi-norm is continuous (in the sense that $\lim_n \Vert x_n\Vert_\rho=\Vert x\Vert_\rho$ whenever $(x_n)_{n=1}^\infty$ and $x$ in $\LL_\rho$ satisfy $\lim_n \Vert x_n-x\Vert_\rho=0$), then both concepts are equivalent. Notice that any function $p$-norm, $0<p\le 1$, is continuous. So, the existence of non-continuous function quasi-norms is a `pathology' which only occurs in the non-locally convex setting. We must point out that, since it is by no means clear whether absolutely continuous norms are continuous, the terminology could be somewhat confusing. Notwithstanding, we prefer to use terminology similar to that it is customary within framework of function norms.

\begin{proposition}\label{prop:TCD}
Let $\rho$ be a function quasi-norm over a $\sigma$-finite measure space $(\Omega,\Sigma,\mu)$. Suppose that $f\in \LL^+_\rho$ is dominating. Then $\lim_n x_n=x$ in $\LL_\rho(\XX)$ for every quasi-Banach space $\XX$ and every sequence $(x_n)_{n=1}^\infty$ in $L_0(\mu,\XX)$ with $\lim_n x_n=x$ a.e.\ and $\Vert x_n\Vert\le f$ a.e.\ for all $n\in\NN$.
\end{proposition}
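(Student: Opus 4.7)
The plan is the natural dominated-convergence argument: form a non-increasing envelope of the pointwise errors $\lVert x_n-x\rVert$, bound it a.e.\ by a scalar multiple of $f$, and invoke the defining property of ``dominating''. The only non-routine point is that the quasi-norm on $\XX$ need not be continuous, so I have to control $\lVert x\rVert$ pointwise via $\liminf$ rather than by a direct limit.

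First I would check that $x\in\LL_\rho(\XX)$. At each $\omega$ in the co-null set where simultaneously $x_n(\omega)\to x(\omega)$ in $\XX$ and $\lVert x_n(\omega)\rVert\le f(\omega)$ for all $n$, the quasi-triangle inequality yields, for every $n$,
\[
\lVert x(\omega)\rVert \le \kappa\lVert x(\omega)-x_n(\omega)\rVert+\kappa\lVert x_n(\omega)\rVert,
\]
where $\kappa$ denotes the modulus of concavity of $\XX$. The first term tends to $0$ by the topological definition of the convergence $x_n(\omega)\to x(\omega)$, and the second is bounded by $\kappa f(\omega)$, so taking $\liminf_n$ gives $\lVert x(\omega)\rVert\le\kappa f(\omega)$ a.e. In particular $x\in\LL_\rho(\XX)$ by \ref{FQN:M}.

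Next I would set
\[
h_n(\omega):=\sup_{k\ge n}\lVert x_k(\omega)-x(\omega)\rVert,\quad \omega\in\Omega.
\]
As a countable supremum of nonnegative measurable functions, $h_n\in L_0^+(\mu)$. The sequence $(h_n)$ is clearly non-increasing, and $\lim_n h_n=0$ a.e.\ because $x_n\to x$ a.e. A second application of the quasi-triangle inequality combined with the bound above gives
\[
\lVert x_k-x\rVert\le \kappa(\lVert x_k\rVert+\lVert x\rVert)\le \kappa(1+\kappa)f\quad\text{a.e.},
\]
so, writing $C_2:=\kappa(1+\kappa)$, we obtain $h_n\le C_2 f$ a.e. Hence $(h_n/C_2)_{n=1}^\infty$ is a non-increasing sequence in $L_0^+(\mu)$ with $h_1/C_2\le f$ and pointwise limit $0$. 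Since $f$ is dominating, $\rho(h_n/C_2)\to 0$, and \ref{FQN:H} yields $\rho(h_n)=C_2\rho(h_n/C_2)\to 0$. Finally, $\lVert x_n-x\rVert\le h_n$ a.e., so \ref{FQN:M} delivers $\lVert x_n-x\rVert_\rho=\rho(\lVert x_n-x\rVert)\le\rho(h_n)\to 0$, which is the desired convergence in $\LL_\rho(\XX)$.
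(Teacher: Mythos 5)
Your proof is correct and follows essentially the same route as the paper's: bound $\lVert x\rVert\le\kappa f$ a.e., form the non-increasing envelope $\sup_{k\ge n}\lVert x_k-x\rVert\le\kappa(1+\kappa)f$, and apply the dominating property. The only difference is that you make explicit the rescaling by $\kappa(1+\kappa)$ needed to meet the condition $h_1\le f$ in the definition of ``dominating'', a step the paper leaves implicit.
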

\begin{proof}Let $\Nt$ be a null set such that $\sup_n \Vert x_n(\omega)\Vert \le f(\omega)<\infty$ and $\lim_n x_n(\omega)=x(\omega)$ for all $\omega\in\Omega\setminus\Nt$. 
Then $\lVert x(\omega)\rVert\le\kappa f(\omega)$ for all $\omega\in\Omega\setminus\Nt$,
where $\kappa$ is the modulus of concavity of the quasi-norm $\lVert \cdot\rVert$.
Set
\[
f_n=\sup_{j\ge n} \Vert x_j-x\Vert \chi_{\Omega\setminus\Nt}, \quad n\in\NN.
\]
The sequence $(f_n)_{n=1}^\infty$ in $L_0^+(\mu)$ decreases to $0$, and 
$\sup_n f_n\le \kappa(\kappa+1)f$. 
Consequently, $\lim_n \rho(f_n)=0$. 
Since $\Vert x_j -x\Vert_\rho\le \rho(f_n)$ whenever $j\ge n$ we are done.
\end{proof}

\begin{proposition}[cf.\ \cite{BennettSharpley1988}*{Proposition 3.6}]\label{prop:13}
Let $\rho$ be a function quasi-norm over a $\sigma$-finite measure space $(\Omega,\Sigma,\mu)$, and let $f$ be a function in $\LL^+_\rho$. Then, $f$ is dominating if and only if
\[
\lim_n \rho(f\chi_{A_n})=0
\]
whenever the sequence $(A_n)_{n=1}^\infty$ in $\Sigma$ decreases to $\emptyset$.
\end{proposition}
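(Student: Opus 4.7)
The ``only if'' direction is immediate from the definition of dominating. Given $A_n\downarrow\emptyset$, the sequence $g_n:=f\chi_{A_n}$ is non-increasing, bounded above by $f$, and converges pointwise to $0$ (each $\omega$ eventually leaves $A_n$), so that the domination of $f$ forces $\rho(f\chi_{A_n})\to 0$.

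For the converse, I start with a non-increasing sequence $(f_n)_{n=1}^\infty$ in $L_0^+(\mu)$ with $f_1\le f$ and $f_n\to 0$ a.e., and I plan to control $\rho(f_n)$ through a threshold parameter $\varepsilon>0$. Setting $A_n:=\{f_n>\varepsilon\}$ gives a non-increasing family with null intersection (because $f_n\to 0$ a.e.), so the hypothesis yields $\rho(f\chi_{A_n})\to 0$ as $n\to\infty$. A case analysis on $A_n^c$ according to whether $f>\varepsilon$ or $0<f\le\varepsilon$, combined with $f_n\le f$, produces the pointwise bound
\[
f_n \;\le\; f\chi_{A_n} \;+\; \varepsilon\chi_{\{f>\varepsilon\}} \;+\; f\chi_{\{0<f\le\varepsilon\}},
\]
whence iterated quasi-subadditivity gives
\[
\rho(f_n) \;\le\; \kappa^{2}\bigl(\rho(f\chi_{A_n})+\rho(\varepsilon\chi_{\{f>\varepsilon\}})+\rho(f\chi_{\{0<f\le\varepsilon\}})\bigr),
\]
where $\kappa$ is the modulus of concavity of $\rho$. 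Taking $\limsup_{n}$ annihilates the first term, so the proof reduces to checking that the remaining two terms tend to $0$ as $\varepsilon\to 0^+$.

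The term $\rho(f\chi_{\{0<f\le\varepsilon\}})$ vanishes directly by the hypothesis, since $\{0<f\le\varepsilon_k\}\downarrow\emptyset$ for any $\varepsilon_k\downarrow 0$. The main obstacle is the term $\rho(\varepsilon\chi_{\{f>\varepsilon\}})$: the set $\{f>\varepsilon\}$ actually \emph{grows} as $\varepsilon\to 0^+$, so the hypothesis does not apply to it in any immediate way. To handle it I would introduce an auxiliary level $c>0$ and split $\{f>\varepsilon\}=\{\varepsilon<f\le c\}\cup\{f>c\}$ for $\varepsilon<c$. On the first piece, $\varepsilon<f$ yields $\varepsilon\chi_{\{\varepsilon<f\le c\}}\le f\chi_{\{0<f\le c\}}$, whose $\rho$-value can be made arbitrarily small by choosing $c$ small (by the case already handled). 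On the second piece, the Chebyshev-type bound $\chi_{\{f>c\}}\le f/c$ together with $f\in\LL_\rho^+$ gives $\rho(\varepsilon\chi_{\{f>c\}})\le(\varepsilon/c)\rho(f)$, which becomes arbitrarily small once $\varepsilon$ is chosen small with respect to the previously fixed $c$. Selecting the parameters $c$ and then $\varepsilon$ in that order completes the proof.
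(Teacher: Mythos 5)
Your proof is correct, but it takes a genuinely different route from the paper's. The paper first treats the case $\mu(\Omega)<\infty$, writing $f_n\le f\chi_{\{f_n\ge s\}}+s\chi_{\{f_n<s\}}$ and absorbing the second term via $s\rho(\chi_{\{f_n<s\}})\le s\rho(\chi_\Omega)$ with $s$ chosen small in terms of $\rho(\chi_\Omega)$; it then reduces the general case to this one by truncating to an exhausting sequence $(\Omega_m)_{m=1}^\infty$ in $\Sigma(\mu)$ and applying the hypothesis to the tails $\Omega\setminus\Omega_m$. You avoid the finite/infinite case split entirely with the three-term decomposition $f_n\le f\chi_{\{f_n>\varepsilon\}}+\varepsilon\chi_{\{f>\varepsilon\}}+f\chi_{\{0<f\le\varepsilon\}}$, and the extra work this creates (the set $\{f>\varepsilon\}$ grows as $\varepsilon\to 0^+$) is handled cleanly by the further split at a level $c$ together with the Chebyshev bound $\chi_{\{f>c\}}\le f/c$ and $\rho(f)<\infty$. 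The trade-off: the paper's argument leans on property \ref{FQN:SF} (finiteness of $\rho(\chi_E)$ for $E$ of finite measure) and a reduction step, whereas yours uses only the standing hypothesis $f\in\LL_\rho^+$ and is a single uniform argument; both ultimately invoke the hypothesis on the decreasing families $\{f_n>\varepsilon\}$ (resp.\ $\{f_n\ge s\}$) and $\{0<f\le\varepsilon\}$ (resp.\ $\Omega\setminus\Omega_m$). One cosmetic point: $\bigcap_n\{f_n>\varepsilon\}$ is a priori only a null set rather than empty, but since $\rho$ is insensitive to null sets this is harmless, and the paper's proof has the same feature.
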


\begin{proof}
The direct implication is obvious. Conversely, suppose that $\rho(f\chi_{A_n})\to 0$ whenever $(A_n)_{n=1}^\infty$ decreases to $\emptyset$. Let $(f_n)_{n=1}^{\infty}$ be a non-increasing sequence of functions in $L_0^+(\mu)$ such that  $f_1\le f$ and $f_n\to 0$. Let us prove that $\rho(f_n)\to 0$. 

Let $\kappa$ be the modulus of concavity of $\rho$, and fix $\varepsilon>0$. 

Assume  first that $\mu(\Omega)<\infty$. Then $\rho(\chi_\Omega)<\infty$, and we can set $s=\varepsilon/(2\kappa\rho(\chi_\Omega))$. For each $n\in\NN$, let $B_n=\{f_n<s\}\subseteq\Omega$. It is a non-decreasing sequence in $\Sigma$ whose union is $\Omega$. Since $f_n \le f\chi_{\Omega\setminus B_n}+s\chi_{B_n}$, we have 
\[
\rho(f_n) \le \kappa\rho(f\chi_{\Omega\setminus B_n}) + \kappa s\rho(\chi_{B_n}) \le \kappa\rho(f\chi_{\Omega\setminus B_n}) + \varepsilon/2 < \varepsilon
\]
for $n$ sufficiently large. 

Now suppose that $\mu(\Omega)=\infty$. Let $(\Omega_m)_{m=1}^{\infty}$ be a non-decreasing sequence in $\Sigma(\mu)$ whose union is $\Omega$. Take $m$ such that $\kappa\rho(f\chi_{\Omega\setminus\Omega_m})<\varepsilon/2$. Since $f_n \le f_n\chi_{\Omega_m}+f\chi_{\Omega\setminus\Omega_m}$, using that $\mu(\Omega_m)<\infty$ and the previous case, we have
\[
\rho(f_n) \le \kappa\rho(f_n\chi_{\Omega_m}) + \kappa \rho(f\chi_{\Omega\setminus\Omega_m}) \le \kappa\rho(f_n\chi_{\Omega_m}) + \varepsilon/2 < \varepsilon
\]
for $n$ sufficiently large. 
\end{proof}

Given a function quasi-norm $\rho$ and a set $E\in\Sigma$ we define
\[
\Phi[E,\rho](t)=\sup\{ \rho(\chi_A) \colon A\in\Sigma,\, A\subseteq E, \mu(A)\le t\},
\]
and we set $\Phi[\rho]=\Phi[\Omega,\rho]$. Notice that the function $\Phi[E,\rho]$ is non-negative and non-decreasing. In particular, there exists the limit of $\Phi[E,\rho](t)$ when $t\to 0^+$. 
\begin{corollary}
A function quasi-norm $\rho$ is locally dominating if and only if $\lim_{t\to 0^+} \Phi[E,\rho](t)=0$ for every $E\in\Sigma(\mu)$.
\end{corollary}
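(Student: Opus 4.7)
The plan is to reduce to Proposition~\ref{prop:13}, which tells us that $\chi_E$ is dominating precisely when $\rho(\chi_E\chi_{A_n})\to 0$ for every sequence $(A_n)$ in $\Sigma$ that decreases to $\emptyset$. Since local domination by definition means that $\chi_E$ is dominating for every $E\in\Sigma(\mu)$, the corollary amounts to showing, for fixed $E\in\Sigma(\mu)$, the equivalence between this condition and $\lim_{t\to 0^+}\Phi[E,\rho](t)=0$.

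For the easy direction, I would fix $E\in\Sigma(\mu)$, assume $\lim_{t\to 0^+}\Phi[E,\rho](t)=0$, and take any sequence $(A_n)$ in $\Sigma$ decreasing to $\emptyset$. Since $\mu(E)<\infty$, continuity of the measure gives $\mu(E\cap A_n)\to 0$. Because $E\cap A_n\subseteq E$, the definition of $\Phi[E,\rho]$ yields
\[
\rho(\chi_E\chi_{A_n})=\rho(\chi_{E\cap A_n})\le \Phi[E,\rho](\mu(E\cap A_n))\xrightarrow[n]{}0,
\]
so Proposition~\ref{prop:13} applies.

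For the converse, I would argue by contrapositive. Suppose $\lim_{t\to 0^+}\Phi[E,\rho](t)=L>0$ for some $E\in\Sigma(\mu)$ (the limit exists because $\Phi[E,\rho]$ is non-decreasing). Then for each $n\in\NN$ I can choose $A_n\in\Sigma$ with $A_n\subseteq E$, $\mu(A_n)\le 2^{-n}$, and $\rho(\chi_{A_n})>L/2$. Setting $B_n:=\bigcup_{k\ge n}A_k$ produces a non-increasing sequence with $\mu(B_n)\le 2^{1-n}$, so the set $B:=\bigcap_n B_n$ is null. The sequence $C_n:=B_n\setminus B$ then genuinely decreases to $\emptyset$, and because $\chi_B=0$ a.e.\ and $A_n\subseteq E\cap B_n$ we have
\[
\rho(\chi_E\chi_{C_n})=\rho(\chi_{E\cap B_n})\ge \rho(\chi_{A_n})>L/2,\quad n\in\NN.
\]
Hence $\rho(\chi_E\chi_{C_n})\not\to 0$, so by Proposition~\ref{prop:13} the function $\chi_E$ is not dominating, i.e., $\rho$ is not locally dominating.

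The only mildly delicate point is passing from a sequence $(A_n)$ with small measure but a.e.\ non-vanishing $\rho(\chi_{A_n})$ to a truly decreasing sequence, which is handled by the union-then-subtract-null-intersection trick above; the rest is a direct combination of monotonicity of $\Phi[E,\rho]$, continuity of the measure on finite sets, and Proposition~\ref{prop:13}.
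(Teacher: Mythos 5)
Your argument is correct and follows essentially the same route as the paper: the forward direction is the same direct application of Proposition~\ref{prop:13} via monotonicity of $\Phi[E,\rho]$, and the converse uses the same construction $A_n\subseteq E$ with $\mu(A_n)\le 2^{-n}$, $\rho(\chi_{A_n})>L/2$, and $B_n=\cup_{k\ge n}A_k$. The only cosmetic difference is that you subtract the null intersection to get a sequence literally decreasing to $\emptyset$ and invoke Proposition~\ref{prop:13} for $\chi_E$, while the paper lets $(\chi_{B_n})$ decrease to a null set and concludes directly from the definition that $\chi_{B_1}$ is not dominating; both are valid.
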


\begin{proof}
If $\lim_{t\to 0^+} \Phi[E,\rho](t)=0$ for every $E\in\Sigma(\mu)$, using Proposition~\ref{prop:13} we obtain that $\rho$ is locally dominating. Now assume that $s:=\lim_{t\to 0^+} \Phi[E,\rho](t)>0$ for some $E\in\Sigma(\mu)$. 
Then there is a sequence $(A_n)_{n=1}^\infty$ of measurable subsets of $E$ such that $\mu(A_n)\le 1/2^n$ and $\rho(\chi_{A_n})>s/2$ for all $n\in\NN$. Set $B_n=\cup_{k=n}^\infty A_k$. The sequence $(B_n)_{n=1}^\infty$ decreases to a null set and $\rho(\chi_{B_n})\ge s/2$ for all $n\in\NN$, so $\chi_{B_1}$ is not dominating. Hence $\rho$ is not locally dominating. 
\end{proof}

\begin{definition}
Let $\rho$ be a function quasi-norm over a $\sigma$-finite measure space $(\Omega,\Sigma,\mu)$. We say say $L\subseteq \LL_\rho$ is an \emph{order ideal with respect to $\rho$} if it is an order ideal and it is closed in $\LL_\rho$.
\end{definition}

\begin{lemma}[cf. \ \cite{BennettSharpley1988}*{Theorem 3.8}]
Let $\rho$ be a function quasi-norm over a $\sigma$-finite measure space $(\Omega,\Sigma,\mu)$. Then $\LL_\rho^d$ is an order ideal with respect to $\rho$.
\end{lemma}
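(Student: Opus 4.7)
The plan is to use Proposition~\ref{prop:13} as the workhorse characterization: a function $g\in\LL_\rho^+$ is dominating if and only if $\rho(g\chi_{A_n})\to 0$ whenever $(A_n)_{n=1}^\infty\subseteq\Sigma$ decreases to $\emptyset$. Everything reduces to checking this condition stably.

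First I would record a monotonicity lemma that is almost free from the characterization: if $0\le h\le h'$ pointwise a.e.\ and $h'$ is dominating, then $h$ is dominating, because $\rho(h\chi_{A_n})\le\rho(h'\chi_{A_n})$ by \ref{FQN:M}. This is the `ideal' property at the level of positive cones.

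Next I would establish that $\LL_\rho^d$ is a linear subspace. For the scalar factor, $\rho(|\alpha f|\chi_{A_n})=|\alpha|\rho(|f|\chi_{A_n})\to 0$ by \ref{FQN:H}. For sums, given $f,g\in\LL_\rho^d$ and $A_n\downarrow\emptyset$, the pointwise inequality $|f+g|\le|f|+|g|$ combined with \ref{FQN:M} and \ref{FQN:QSA} gives
\[
\rho(|f+g|\chi_{A_n})\le\kappa\bigl(\rho(|f|\chi_{A_n})+\rho(|g|\chi_{A_n})\bigr)\to 0,
\]
where $\kappa$ is the modulus of concavity of $\rho$. Closure under conjugation is immediate since $|\overline{f}|=|f|$, and closure under $\max$ for real-valued $f,g$ follows from $\max\{f,g\}\le|f|+|g|$ together with the monotonicity lemma applied to $|f|+|g|$, which is dominating by the additivity just shown.

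The main step, and the one requiring a bit of care, is closedness of $\LL_\rho^d$ in $\LL_\rho$. Suppose $(f_n)_{n=1}^\infty\subseteq\LL_\rho^d$ converges to $f$ in $\LL_\rho$, and fix $A_n\downarrow\emptyset$ and $\varepsilon>0$. Choose $k$ so large that $\rho(|f-f_k|)<\varepsilon$. From the pointwise bound $|f|\le|f-f_k|+|f_k|$ and \ref{FQN:M}, \ref{FQN:QSA} one obtains
\[
\rho(|f|\chi_{A_n})\le\kappa\bigl(\rho(|f-f_k|\chi_{A_n})+\rho(|f_k|\chi_{A_n})\bigr)\le\kappa\varepsilon+\kappa\rho(|f_k|\chi_{A_n}).
\]
Since $f_k\in\LL_\rho^d$, Proposition~\ref{prop:13} gives $\rho(|f_k|\chi_{A_n})\to 0$ as $n\to\infty$, hence $\limsup_n\rho(|f|\chi_{A_n})\le\kappa\varepsilon$. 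As $\varepsilon$ was arbitrary, $|f|$ is dominating by Proposition~\ref{prop:13}, so $f\in\LL_\rho^d$. The only subtlety is that $f$ a priori need only lie in $\LL_\rho$, but this already ensures $|f|\in\LL_\rho^+$, which is what the characterization requires; the asymmetry of the quasi-triangle inequality is absorbed cleanly into the single constant $\kappa$.
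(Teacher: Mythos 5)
Your proof is correct. The main difference from the paper's argument is that you funnel every step through the characterization of Proposition~\ref{prop:13} ($g$ is dominating iff $\rho(g\chi_{A_n})\to 0$ whenever $A_n\downarrow\emptyset$), whereas the paper argues directly from the definition of domination via non-increasing sequences $h_n\downarrow 0$ with $h_1$ below the given function. Concretely: for the lattice operation the paper splits $\Omega$ into $A=\{|f|<|g|\}$ and its complement and uses $h_1\chi_A\le|g|$, $h_1\chi_{\Omega\setminus A}\le|f|$, while you bound $|\max\{f,g\}|$ by $|f|+|g|$ and invoke your monotonicity lemma; for closedness the paper uses the pointwise decomposition $g_n\le\min\{g_n,|f_j|\}+|f-f_j|$, which requires observing that $\min\{g_n,|f_j|\}$ is a non-increasing null sequence dominated by $|f_j|$, while your $\varepsilon$--$\limsup$ estimate on $\rho(|f|\chi_{A_n})$ avoids that trick entirely. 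Your route buys a more uniform treatment---every step becomes a quasi-triangle estimate on restrictions to shrinking sets---at the cost of invoking the nontrivial converse direction of Proposition~\ref{prop:13}; the paper's route stays self-contained relative to the definition. One cosmetic point: in the max step the monotonicity lemma should be applied to $|\max\{f,g\}|\le|f|+|g|$ (valid because $\max\{f,g\}$ agrees pointwise with $f$ or with $g$) rather than to $\max\{f,g\}\le|f|+|g|$ as written, since your monotonicity lemma is phrased for nonnegative functions.
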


\begin{proof}
It is straightforward that $\LL_\rho^d$ is a subspace of $L_0(\mu)$. 
If a function $f$ belongs to $\LL_\rho^d$, 
obviously $\overline{f}$ also belongs to $\LL_\rho^d$. 
Let $f$ and $g$ be real-valued functions in $\LL_\rho^d$. 
Set $A=\{\omega\in\Omega\colon \lvert f(\omega)\rvert<\lvert g(\omega)\rvert\}$. 
Let $(h_n)_{n=1}^\infty$ be a sequence in $L_0^+(\mu)$ decreasing to $0$ 
with $h_1\le \max\{\lvert f\rvert,\lvert g\rvert\}$. 
Since $\lvert f\rvert$ and $\lvert g\rvert$ are dominating, 
$h_1\chi_{A} \le \lvert g\rvert$, and $h_1\chi_{\Omega\setminus A}\le \lvert f\rvert$, 
we obtain that $\lim_n \rho(h_n\chi_A)=0$ and $\lim_n \rho(h_n\chi_{\Omega\setminus A})=0$.  
Hence $\lim_n \rho(h_n)=0$. Therefore $\max\{\lvert f\rvert,\lvert g\rvert\}$ is 
dominating. This implies that $\lvert \max\{f,g\}\rvert$ is also dominating,  
so $\LL_\rho^d$ is an order ideal. 

Now we prove that $\LL_\rho^d$ is closed in $\LL_\rho$. Let $(f_j)_{j=1}^\infty$ be 
a sequence in $\LL_\rho^{d}$ that converges in $\LL_\rho$ to a function $f$. 
Let $(g_n)_{n=1}^\infty$ be a non-increasing sequence in $L_0^+(\mu)$ with 
$g_1\le \lvert f\rvert$ and $\lim_n g_n=0$. 
Then $g_n\le\min\{ g_n,\lvert f_j\rvert\} +|f-f_j|$ for each $j\in\NN$.
Consequently, if $\kappa$ is the modulus of concavity of $\rho$, we have
\[
\rho(g_n)\le \kappa \rho(\min\{ g_n,\lvert f_j\rvert\}) + \kappa \rho(\lvert f-f_j\rvert).
\]
Hence $\lim_n \rho(g_n)=0$. So $\lvert f\rvert$ is dominating, 
as we wanted to prove.
\end{proof}

\begin{definition}
Let $\rho$ be a function quasi-norm over a $\sigma$-finite measure space $(\Omega,\Sigma,\mu)$. We denote by $\LL_\rho^{b}$ the closure of $\Sp(\mu)$ in $\LL_\rho$.
We say that $\rho$ is \emph{minimal} if $\LL_\rho^b=\LL_\rho$.
\end{definition}

\begin{lemma}[cf. \ \cite{BennettSharpley1988}*{Proposition 3.10}]
Let $\rho$ be a function quasi-norm over a $\sigma$-finite measure space $(\Omega,\Sigma,\mu)$. Then $\LL_\rho^b$ is an order ideal with respect to $\rho$. Moreover $\LL_\rho^{b,+}$
is the closure in $\LL_\rho$ of
\[
\Cn=\{f\in L_0^+(\mu) \colon \Vert f\Vert_\infty<\infty, \quad \mu_f(0)<\infty\}.
\]
\end{lemma}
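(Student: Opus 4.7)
Closure under the ambient topology is built into the definition, so the bulk of the first assertion is verifying that $\LL_\rho^b$ is a linear subspace stable under complex conjugation and under the pointwise maximum of real-valued elements. Linearity is free from the fact that the closure of a subspace is a subspace. For conjugation, given $f\in \LL_\rho^b$ pick simple functions $s_n\to f$ in $\LL_\rho$; then $\overline{s_n}\in\Sp(\mu)$ and
\[
\|\overline{s_n}-\overline{f}\|_\rho=\rho(|\overline{s_n}-\overline{f}|)=\rho(|s_n-f|)=\|s_n-f\|_\rho\to 0,
\]
so $\overline{f}\in\LL_\rho^b$. Combined with linearity, this shows that whenever $f\in\LL_\rho^b$ is real-valued we may approximate it by the real simple functions $\operatorname{Re}(s_n)=(s_n+\overline{s_n})/2$. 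For the maximum, given real-valued $f,g\in\LL_\rho^b$, choose such real approximations $s_n\to f$, $t_n\to g$; then $\max\{s_n,t_n\}\in\Sp(\mu)$, and from the elementary inequality $|\max\{a,b\}-\max\{c,d\}|\le|a-c|+|b-d|$ and the quasi-triangle inequality for $\rho$ one gets
\[
\|\max\{s_n,t_n\}-\max\{f,g\}\|_\rho\le \kappa\bigl(\|s_n-f\|_\rho+\|t_n-g\|_\rho\bigr)\to 0,
\]
where $\kappa$ is the modulus of concavity of $\rho$. Hence $\max\{f,g\}\in\LL_\rho^b$.

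\textbf{Second assertion.} The key observation is the chain $\Sp^+(\mu)\subseteq\Cn\subseteq\overline{\Sp^+(\mu)}^{\LL_\rho}$. The left inclusion is immediate: any $s=\sum_{j=1}^n a_j\chi_{A_j}$ with $a_j\ge 0$ and $A_j\in\Sigma(\mu)$ is bounded and has $\mu_s(0)\le\sum_j\mu(A_j)<\infty$. For the right inclusion, given $f\in\Cn$ with $\|f\|_\infty=M<\infty$ and $E:=\{f>0\}\in\Sigma(\mu)$, the standard dyadic truncations
\[
s_n=\sum_{k=1}^{\lceil M 2^n\rceil}\frac{k-1}{2^n}\chi_{\{(k-1)/2^n\le f<k/2^n\}}
\]
lie in $\Sp^+(\mu)$, are supported in $E$, and satisfy $0\le f-s_n\le 2^{-n}\chi_E$; then by monotonicity and homogeneity
\[
\|f-s_n\|_\rho\le 2^{-n}\rho(\chi_E)\to 0.
\]
Taking closures gives $\overline{\Cn}^{\LL_\rho}=\overline{\Sp^+(\mu)}^{\LL_\rho}$, so it only remains to identify this last set with $\LL_\rho^{b,+}$.

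\textbf{Identifying the closure.} The inclusion $\overline{\Sp^+(\mu)}^{\LL_\rho}\subseteq \LL_\rho^{b,+}$ follows once we know that the cone $L_0^+(\mu)$ is closed in $\LL_\rho$: if $f_n\ge 0$ a.e.\ and $\|f_n-f\|_\rho\to 0$, then by Proposition~\ref{prop21} a subsequence converges a.e., forcing $f\ge 0$ a.e. For the reverse inclusion $\LL_\rho^{b,+}\subseteq \overline{\Sp^+(\mu)}^{\LL_\rho}$, take $f\in\LL_\rho^{b,+}$ and pick $s_n\in\Sp(\mu)$ with $s_n\to f$ in $\LL_\rho$. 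Replace $s_n$ by $s_n^{+}:=\max\{\operatorname{Re}(s_n),0\}$, which is again in $\Sp^+(\mu)$. Using $|s_n^{+}-f|=|s_n^{+}-f^{+}|\le|\operatorname{Re}(s_n)-f|\le|s_n-f|$, we conclude $s_n^{+}\to f$ in $\LL_\rho$, which completes the proof.

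\textbf{Expected obstacle.} No single step is deep; the only subtlety is that elements of $\LL_\rho^b$ a priori are only limits of complex-valued simple functions, so some care is needed in extracting real or non-negative simple approximations before forming the pointwise maximum or verifying positivity of the limit. The use of Proposition~\ref{prop21} (passing to an a.e.\ convergent subsequence) to see that $L_0^+(\mu)$ is $\LL_\rho$-closed is the main technical point, and everything else reduces to elementary manipulations with $\rho$ and the quasi-triangle inequality.
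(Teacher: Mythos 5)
Your proof is correct and follows essentially the same route as the paper's: the key step in both is that a function in $\Cn$ can be uniformly approximated from below by non-negative simple functions supported on its finite-measure support $E$, so that $\rho(f-s)\le\Vert f-s\Vert_\infty\,\rho(\chi_E)$ is small; you merely spell out details the paper leaves implicit (the order-ideal verification, the closedness of the positive cone via Proposition~\ref{prop21}, and the passage from complex simple approximants to non-negative ones via $\max\{\operatorname{Re}(s_n),0\}$). The only blemish is the displayed dyadic truncation: when $M2^n\in\ZZ$ the set $\{f=M\}$ is not covered by the half-open bins $[(k-1)2^{-n},k2^{-n})$, so the bound $0\le f-s_n\le 2^{-n}\chi_E$ can fail (e.g.\ $f=\chi_E$ gives $s_n=0$); extending the sum by one term, or closing the top interval, fixes this without affecting the argument.
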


\begin{proof}
It is obvious that $\LL_\rho^b$ is an order ideal in $L_0(\mu)$, and it is closed in $\LL_\rho$ by definition. Hence $\LL_\rho^b$ is an order ideal with respect to $\rho$. 

Let $f$ be a function in $\Cn$, and set $E:=\{0<f<\infty\}\subseteq\Omega$. 
Since $\mu(E)<\infty$, we have $\rho(\chi_E)<\infty$. 
Fix $\varepsilon>0$, and let $0\le g\le f$ be a simple function such that 
$\lVert f-g\rVert_\infty<\varepsilon/\rho(\chi_E)$. Then 
\[
\rho(f-g) \le \lVert f-g\rVert_\infty \rho(\chi_E) < \varepsilon.
\]
This means that $\Cn$ is contained in $\LL_\rho^{b,+}$. 
Therefore, the closure of $\Cn$ in $\LL_\rho$ is also contained in $\LL_\rho^{b,+}$. 
On the other hand, it is obvious that every non-negative simple function which is finite a.e. 
belongs to $\Cn$. So the second part of the statement follows. 
\end{proof}

\begin{proposition}[cf. \ \cite{BennettSharpley1988}*{Theorem 3.11}]
\label{prop:24}
For any function quasi-norm $\rho$ over a $\sigma$-finite measure space $(\Omega,\Sigma,\mu)$ we have $\LL_\rho^{d}\subseteq \LL_\rho^b$.
\end{proposition}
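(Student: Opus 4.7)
The plan is to show that any non-negative function $f\in\LL_\rho^{d,+}$ belongs to $\LL_\rho^{b,+}$, and then deduce the general inclusion using the ideal structure of $\LL_\rho^d$. So fix $f\in \LL_\rho^{d,+}$. Since the measure is $\sigma$-finite, choose an increasing sequence $(\Omega_n)_{n=1}^\infty$ in $\Sigma(\mu)$ whose union is $\Omega$, and define the truncations
\[
g_n=\min\{f,n\}\,\chi_{\Omega_n},\quad n\in\NN.
\]
Each $g_n$ is bounded and supported on a set of finite measure, so $g_n\in\Cn$; by the preceding lemma, $g_n\in\LL_\rho^{b,+}$.

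Next I would set $h_n:=f-g_n\in L_0^+(\mu)$. Because $(\chi_{\Omega_n})_{n}$ and $(\min\{f,n\})_{n}$ are both non-decreasing, $(g_n)_{n}$ is non-decreasing, hence $(h_n)_n$ is non-increasing with $h_1\le f$. By Lemma~\ref{lem1}\ref{lem1:2}, $f<\infty$ a.e., so $\min\{f,n\}\uparrow f$ a.e., and together with $\Omega_n\uparrow\Omega$ this gives $g_n\to f$ a.e., i.e.\ $h_n\to 0$ a.e. Since $f$ is dominating, the definition then yields $\lim_n\rho(h_n)=0$, that is, $\lVert f-g_n\rVert_\rho\to 0$. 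As $\LL_\rho^b$ is closed in $\LL_\rho$, this forces $f\in\LL_\rho^{b,+}$.

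For the general case, let $f\in\LL_\rho^d$. Since $\LL_\rho^d$ is an order ideal with respect to $\rho$ (by the previous lemma), each of the four non-negative components in the decomposition $f=(\operatorname{Re} f)_+-(\operatorname{Re} f)_-+i((\operatorname{Im} f)_+-(\operatorname{Im} f)_-)$ lies in $\LL_\rho^{d,+}$, hence in $\LL_\rho^{b,+}$ by the case already settled. As $\LL_\rho^b$ is a (linear) order ideal, we conclude $f\in\LL_\rho^b$.

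The only nontrivial point is verifying that the ``tail'' $h_n=f-g_n$ is genuinely non-increasing with $h_1\le f$, so that the dominating property can be applied directly; this is why I chose the particular truncation $\min\{f,n\}\chi_{\Omega_n}$ rather than, say, a $\sigma$-finite exhaustion that is not compatible with the pointwise truncation. Once this is set up, the proof reduces to a pointwise a.e.\ monotone convergence argument plus one invocation of the definition of dominating, so I do not expect serious obstacles beyond this bookkeeping.
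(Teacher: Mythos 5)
Your proof is correct and follows essentially the same route as the paper: reduce to the positive cone via the order-ideal structure, approximate $f$ from below by a non-decreasing sequence of bounded, finitely supported functions, and use the dominating property of $f$ to upgrade a.e.\ convergence to convergence in $\LL_\rho$, concluding by the closedness of $\LL_\rho^b$. The only (cosmetic) difference is that you use the single combined truncation $\min\{f,n\}\chi_{\Omega_n}$ together with the preceding lemma on the cone $\Cn$, whereas the paper performs a two-step approximation (spatial cut-off to $\{f>0\}$, then simple functions) and invokes domination twice.
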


\begin{proof}
It is enough to prove that $\LL_\rho^{d,+}\subseteq \LL_\rho^{b,+}$.
Let $f$ be a function in $\LL^{d,+}_\rho$. 
Let $(A_n)_{n=1}^\infty$ be an increasing sequence in $\Sigma(\mu)$ 
whose union is $\{f>0\}\subseteq\Omega$. Pick an increasing sequence $(f_j)_{j=1}^\infty$ of measurable positive simple functions with $\lim_n f_n=f$. We have $\lim_n\rho (f-f\chi_{A_n})=0$ and $\lim_j \rho(f\chi_{A_n}-f_j\chi_{A_n})=0$ for each $n\in\NN$. Since $f_j\chi_{A_n}\in\LL_\rho^{b,+}$ for all $j$, $n\in\NN$, we infer that $f\in \LL_\rho^{b,+}$. 
\end{proof}

\begin{corollary}
A function quasi-norm $\rho$ is locally dominating if and only if $\LL_\rho^d= \LL_\rho^{b}$. Moreover if $\rho$ is dominating, then $\rho$ is minimal.
\end{corollary}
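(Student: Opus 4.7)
The plan is to use the previously established facts that $\LL_\rho^d$ is an order ideal with respect to $\rho$ (in particular, closed in $\LL_\rho$), that $\LL_\rho^d\subseteq\LL_\rho^b$ by Proposition~\ref{prop:24}, and that $\LL_\rho^b$ is by definition the closure of $\Sp(\mu)$ in $\LL_\rho$. All three will combine to give the equivalence very cleanly.

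For the direct implication, assume $\rho$ is locally dominating. Then $\chi_E\in\LL_\rho^d$ for every $E\in\Sigma(\mu)$. Since $\LL_\rho^d$ is a linear subspace, every $f\in\Sp(\mu)$ belongs to $\LL_\rho^d$, so $\Sp(\mu)\subseteq\LL_\rho^d$. Because $\LL_\rho^d$ is closed in $\LL_\rho$, taking closures yields $\LL_\rho^b\subseteq\LL_\rho^d$, and together with Proposition~\ref{prop:24} we conclude $\LL_\rho^b=\LL_\rho^d$.

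For the converse, assume $\LL_\rho^d=\LL_\rho^b$. Given any $E\in\Sigma(\mu)$, the characteristic function $\chi_E$ is an element of $\Sp(\mu)\subseteq\LL_\rho^b=\LL_\rho^d$, so $\chi_E$ is dominating. Hence $\rho$ is locally dominating.

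For the moreover part, if $\rho$ is dominating then $\LL_\rho^d=\LL_\rho$, so in particular $\rho$ is locally dominating, and by the first part $\LL_\rho^b=\LL_\rho^d=\LL_\rho$; equivalently (and more directly), Proposition~\ref{prop:24} gives $\LL_\rho=\LL_\rho^d\subseteq\LL_\rho^b\subseteq\LL_\rho$, forcing $\LL_\rho^b=\LL_\rho$, which is the minimality of $\rho$. There is no real obstacle here: the entire content has been packaged into the preceding lemma and proposition, and the corollary is essentially a bookkeeping exercise combining closedness of $\LL_\rho^d$, density of $\Sp(\mu)$ in $\LL_\rho^b$, and the inclusion $\LL_\rho^d\subseteq\LL_\rho^b$.
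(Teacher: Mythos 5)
Your proof is correct and follows essentially the same route as the paper, which simply declares the corollary a straightforward consequence of Proposition~\ref{prop:24}; you have usefully made explicit the additional ingredient needed for the forward direction, namely that $\LL_\rho^d$ is a closed linear subspace of $\LL_\rho$ containing $\Sp(\mu)$ and hence contains its closure $\LL_\rho^b$.
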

\begin{proof}
It is a straightforward consequence of Proposition~\ref{prop:24}
\end{proof}

Since we could need to deal with non-continuous function quasi-norms, we give some results pointing to ensure that $\lim_n \Vert x_n\Vert_\rho=\Vert x\Vert_\rho$ under the assumption that $(x_n)_{n=1}^{\infty}$ converges to $x$.
\begin{lemma}\label{lem:35}
Let $\rho$ be a function quasi-norm over a $\sigma$-finite measure space $(\Omega,\Sigma,\mu)$ with the Fatou property, and let $(f_n)_{n=1}^\infty$ be a sequence in $L_0^+(\mu)$. Then
\[
\rho(\liminf_n f_n)\le \liminf_n \rho(f_n).
\]
\end{lemma}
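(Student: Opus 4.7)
The plan is to mimic the classical proof of Fatou's lemma: rewrite $\liminf_n f_n$ as the monotone limit of $g_k := \inf_{n \geq k} f_n$ and then apply the Fatou property directly to this non-decreasing sequence.

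First, I would define $g_k(\omega) = \inf_{n \geq k} f_n(\omega)$ for each $k \in \NN$ and $\omega \in \Omega$. Since the infimum is taken over a set that shrinks as $k$ grows, $(g_k)_{k=1}^\infty$ is a non-decreasing sequence in $L_0^+(\mu)$, and by definition $\lim_k g_k = \liminf_n f_n$ pointwise. I would then invoke the Fatou property (which holds with constant $C = 1$ by hypothesis) to obtain
\[
\rho\big(\liminf_n f_n\big) = \rho\big(\lim_k g_k\big) \le \lim_k \rho(g_k).
\]

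Next, I would estimate each $\rho(g_k)$ from above. Since $g_k \le f_n$ pointwise for every $n \geq k$, property \ref{FQN:M} gives $\rho(g_k) \le \rho(f_n)$ for all $n \ge k$, and hence $\rho(g_k) \le \inf_{n \ge k} \rho(f_n)$. Because the sequence $(\inf_{n \geq k} \rho(f_n))_{k=1}^{\infty}$ is non-decreasing with limit $\liminf_n \rho(f_n)$, taking $k \to \infty$ yields
\[
\lim_k \rho(g_k) \le \lim_k \inf_{n \ge k} \rho(f_n) = \liminf_n \rho(f_n),
\]
which combined with the previous display proves the claim.

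There is essentially no obstacle here: the argument is a line-by-line transcription of the classical measure-theoretic Fatou lemma, the only ingredients being the monotonicity axiom \ref{FQN:M} and the Fatou property in its sharp (constant $1$) form. The only minor point worth flagging is that this proof genuinely uses the Fatou property rather than merely the rough Fatou property, since the constant $C$ propagates to the final inequality; if one only had the rough Fatou property one would obtain the weaker conclusion $\rho(\liminf_n f_n) \le C \liminf_n \rho(f_n)$.
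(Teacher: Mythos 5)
Your proof is correct and is precisely the paper's argument: the paper's one-line proof says to apply the Fatou property to $\inf_{k\ge n} f_k$, and you have simply written out the same steps (monotonicity of the infima, the Fatou property with $C=1$, and axiom \ref{FQN:M}) in full detail.
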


\begin{proof}
Just apply Fatou property to $\inf_{k\ge n} f_k$, $n\in\NN$.
\end{proof}

\begin{lemma}\label{lem:34}
Let $\rho$ be a function quasi-norm with the Fatou property  
and $\XX$ be a Banach space. 
If $x\in\LL_\rho(\XX)$ and $(x_n)_{n=1}^\infty\subseteq\LL_\rho(\XX)$ satisfy
$\sup_n \Vert x_n\Vert \le \Vert x\Vert$ and $\lim_n \rho(\Vert x_n-x\Vert)=0$,  
then $\lim_n \rho(\Vert x_n\Vert)=\rho(\Vert x\Vert)$.
\end{lemma}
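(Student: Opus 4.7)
The plan is to combine three ingredients already established in the section: the monotonicity axiom~\ref{FQN:M}, which yields one inequality for free; the Fatou property in the form of Lemma~\ref{lem:35}; and Proposition~\ref{prop21}, which extracts a.e.-convergent subsequences from $\LL_\rho(\XX)$-convergent ones. I would argue via the subsequence principle, because $(\lVert x_n\rVert)_{n=1}^\infty$ need not converge a.e., so a direct application of Fatou to the full sequence is unavailable.

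The easy half is the upper bound. From $\lVert x_n\rVert\le \lVert x\rVert$ a.e.\ and~\ref{FQN:M} one obtains $\rho(\lVert x_n\rVert)\le \rho(\lVert x\rVert)$ for every $n$, so
\[
\limsup_n \rho(\lVert x_n\rVert)\le \rho(\lVert x\rVert).
\]

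For the matching lower bound $\liminf_n \rho(\lVert x_n\rVert)\ge \rho(\lVert x\rVert)$, I fix an arbitrary subsequence of $(x_n)_{n=1}^{\infty}$. It still converges to $x$ in $\LL_\rho(\XX)$, so by Proposition~\ref{prop21} it admits a further subsequence $(y_k)_{k=1}^{\infty}$ with $y_k\to x$ a.e. Continuity of the norm of the Banach space $\XX$ then forces $\lVert y_k\rVert\to \lVert x\rVert$ a.e., in particular $\liminf_k \lVert y_k\rVert=\lVert x\rVert$ a.e. Applying Lemma~\ref{lem:35} to $(\lVert y_k\rVert)_{k=1}^{\infty}$ gives
\[
\rho(\lVert x\rVert)=\rho\bigl(\liminf_k \lVert y_k\rVert\bigr)\le \liminf_k \rho(\lVert y_k\rVert),
\]
while $\rho(\lVert y_k\rVert)\le \rho(\lVert x\rVert)$ still holds by monotonicity. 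Hence $\lim_k \rho(\lVert y_k\rVert)=\rho(\lVert x\rVert)$. Since every subsequence of $(\rho(\lVert x_n\rVert))_{n=1}^{\infty}$ admits a further subsequence converging to $\rho(\lVert x\rVert)$, the full sequence converges to that value, and combined with the upper bound this gives the claim.

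There is no serious obstacle; the one point that requires care is that $\LL_\rho(\XX)$-convergence does not yield a.e.\ convergence of the whole sequence, which is precisely why the subsequence-of-a-subsequence dance is invoked rather than a single appeal to Fatou.
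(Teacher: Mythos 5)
Your proof is correct and follows essentially the same route as the paper: the upper bound from monotonicity, and the lower bound by extracting an a.e.-convergent subsequence via Proposition~\ref{prop21} and applying the Fatou-type Lemma~\ref{lem:35}. The only cosmetic differences are that the paper first passes to scalar convergence through Lemma~\ref{lem:ConABs} before invoking Proposition~\ref{prop21}, and selects a liminf-realizing subsequence rather than using the every-subsequence-has-a-further-subsequence principle; both devices are equivalent here.
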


\begin{proof}
Obviously, $\limsup_n\rho(\lVert x_n\rVert)\le\rho(\lVert x\rVert)$. 
Let us see now that $\rho(\lVert x\rVert)\le\liminf_n\rho(\lVert x_n\rVert)$. 
Let $(y_n)_{n=1}^{\infty}$ be a subsequence of $(x_n)_{n=1}^{\infty}$ 
such that $\lim_n\rho(\lVert y_n\rVert)=\liminf_n\rho(\lVert x_n\rVert)$. 
Since $\lim_n\rho(\lVert x-y_n\rVert)=0$, by Lemma~\ref{lem:ConABs} we have 
$\lim_n\rho(\lVert x\rVert-\lVert y_n\rVert)=0$. Then Proposition~\ref{prop21}  
guarantees the existence of a subsequence $(z_n)_{n=1}^{\infty}$ of $(y_n)_{n=1}^{\infty}$ 
such that $\lim_n\lVert z_n\rVert=\lVert x\rVert$. Using Lemma~\ref{lem:35} we obtain
\[
\rho(\lVert x\rVert)=\rho(\lim_n\lVert z_n\rVert)\le\liminf_n\rho(\lVert z_n\rVert)
=\lim_n\rho(\lVert y_n\rVert),
\]
as we wanted to prove. 
\end{proof}

\begin{lemma}\label{lem:39}
Let $\rho$ be a function quasi-norm over a $\sigma$-finite measure space $(\Omega,\Sigma,\mu)$ with the Fatou property, and let $\XX$ be a Banach space. 
If $x\in L_0(\mu,\XX)$ and $(x_n)_{n=1}^\infty\subseteq L_0(\mu,\XX)$ satisfy 
$\lim_n x_n=x$ a.e., and $\sup_n \Vert x_n\Vert\le g$ for some $g\in \LL^{a,+}_\rho$,  
then $\lim_n \rho(\Vert x_n\Vert)=\rho(\Vert x\Vert)$.
\end{lemma}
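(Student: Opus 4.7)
The plan is to sandwich $\rho(\lVert x_n\rVert)$ between two quantities both converging to $\rho(\lVert x\rVert)$: an upper bound coming from the absolute continuity of $g$, and a matching lower bound coming from the Fatou property of $\rho$.

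First I would use that $\XX$ is a Banach space: the a.e.\ convergence $x_n\to x$ then yields $\lVert x_n\rVert\to\lVert x\rVert$ a.e.\ via the reverse triangle inequality. Combining this with $\lVert x_n\rVert\le g$ a.e.\ and passing to the a.e.\ limit gives $\lVert x\rVert\le g$ a.e. Next I would introduce the pointwise envelope $z_n:=\sup_{k\ge n}\lVert x_k\rVert$; this is a non-increasing sequence in $L_0^+(\mu)$ with $z_1\le g$ a.e., and it satisfies $z_n\downarrow\lVert x\rVert$ a.e. Since $g\in\LL_\rho^{a,+}$ and $z_1\le g$, the very definition of absolute continuity, applied to $(z_n)$, gives
\[
\lim_n\rho(z_n)=\rho\bigl(\lim_n z_n\bigr)=\rho(\lVert x\rVert).
\]
By monotonicity of $\rho$ and $\lVert x_n\rVert\le z_n$, this produces the upper bound $\limsup_n\rho(\lVert x_n\rVert)\le\rho(\lVert x\rVert)$.

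For the matching lower bound I would invoke Lemma~\ref{lem:35}, the $\liminf$-form of the Fatou property, to obtain
\[
\rho(\lVert x\rVert)=\rho\bigl(\liminf_n\lVert x_n\rVert\bigr)\le\liminf_n\rho(\lVert x_n\rVert),
\]
and combine with the $\limsup$ bound to conclude equality. The one delicate step is ensuring that $z_n\to\lVert x\rVert$ a.e., which reduces to showing $\lim_k\lVert x_k(\omega)\rVert=\lVert x(\omega)\rVert$ for a.e.\ $\omega$; this is precisely where the Banach-space hypothesis is used, since in a general quasi-normed space with modulus of concavity $\kappa>1$ one only obtains $\lVert x(\omega)\rVert/\kappa\le\liminf_k\lVert x_k(\omega)\rVert\le\limsup_k\lVert x_k(\omega)\rVert\le\kappa\lVert x(\omega)\rVert$, which would not be enough to control $z_n$.
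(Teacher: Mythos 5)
Your proof is correct and uses the same two ingredients as the paper's: the upper envelope $\sup_{k\ge n}\lVert x_k\rVert$ together with the absolute continuity of $g$ for the $\limsup$ bound, and Lemma~\ref{lem:35} for the $\liminf$ bound. The only difference is organizational: the paper first treats the special cases $\lVert x_n\rVert\le\lVert x\rVert$ and $\lVert x_n\rVert\ge\lVert x\rVert$ separately and then sandwiches via $\min$ and $\max$, whereas you apply both bounds directly to the general sequence, which shows that case split is unnecessary.
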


\begin{proof}
Note that since $\lim_n x_n=x$ a.e.\ and $\XX$ is a Banach space, we have 
$\lim_n \lVert x_n\rVert=\lVert x\rVert$ a.e. 
Consider two particular cases. 
First, suppose that $\lVert x_n\rVert\le\lVert x\rVert$ for all $n\in\NN$. 
Obviously, $\limsup_n\rho(\lVert x_n\rVert)\le\rho(\lVert x\rVert)$. 
Then, by Lemma~\ref{lem:35}, $\rho(\lVert x\rVert)\le\liminf_n\rho(\lVert x_n\rVert)$. 
Second, suppose that $\lVert x_n\rVert\ge\lVert x\rVert$ for all $n\in\NN$. 
Obviously, $\liminf_n\rho(\lVert x_n\rVert)\ge\rho(\lVert x\rVert)$. 
Set $g_n=\sup_{k\ge n} \Vert x_k\Vert$. Then $g\ge g_1$ 
and $(g_n)_{n=1}^{\infty}$ is non-increasing with $\lim_ng_n=\lVert x\rVert$ a.e. 
Using the absolute continuity of $g$, we have 
$\limsup_n\rho(\lVert x_n\rVert)\le\lim\rho(g_n)=\rho(\lVert x\rVert)$. 
In the general case, set $g_n=\min\{\Vert x_n\Vert ,\Vert x\Vert\}$ 
and $h_n= \max\{\Vert x_n\Vert,\Vert x\Vert\}$. 
Then both $(\rho(g_n))_{n=1}^{\infty}$ and $(\rho(h_n))_{n=1}^{\infty}$ 
converge to $\rho(\lVert x\rVert)$. 
Since $g_n\le\lVert x_n\rVert\le h_n$, the statement follows. 
\end{proof}

\begin{proposition}
Let $\rho$ be a function quasi-norm over a $\sigma$-finite measure space $(\Omega,\Sigma,\mu)$ with the Fatou property, and let $\XX$ be a Banach space. 
If $x\in L_0(\mu,\XX)$ and $(x_n)_{n=1}^\infty\subseteq L_0(\mu,\XX)$ satisfy 
$\lim_n\lVert x-x_n\rVert_\rho=0$, and $\sup_n \Vert x_n\Vert\le g$ 
for some $g\in \LL^{a,+}_\rho$,  then $\lim_n \rho(\Vert x_n\Vert)=\rho(\Vert x\Vert)$.
\end{proposition}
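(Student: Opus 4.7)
The plan is to reduce the statement to Lemma~\ref{lem:39}, which handles the case of a.e.\ convergence, via the classical subsequence principle: a real sequence $(a_n)_{n=1}^\infty$ converges to $a$ if and only if every subsequence of $(a_n)_{n=1}^\infty$ contains a further subsequence converging to $a$. The bridge to the a.e.\ setting is provided by Proposition~\ref{prop21}, which extracts a.e.\ convergent subsequences out of sequences convergent in $\LL_\rho$.

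First I would take an arbitrary subsequence $(y_n)_{n=1}^\infty$ of $(x_n)_{n=1}^\infty$. Since $\lim_n \lVert x-y_n\rVert_\rho = 0$, Proposition~\ref{prop21} produces a further subsequence $(z_n)_{n=1}^\infty$ of $(y_n)_{n=1}^\infty$ with $\lim_n z_n = x$ a.e. The hypothesis $\sup_n \lVert x_n\rVert \le g$ is inherited by any subsequence, so $\sup_n \lVert z_n\rVert \le g$ with $g\in \LL_\rho^{a,+}$. Applying Lemma~\ref{lem:39} to $(z_n)_{n=1}^\infty$ then gives $\lim_n \rho(\lVert z_n\rVert) = \rho(\lVert x\rVert)$. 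Invoking the subsequence principle on the real sequence $(\rho(\lVert x_n\rVert))_{n=1}^\infty$ concludes the argument.

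The proof is essentially structural and presents no serious obstacle: all the analytical content (Fatou property and dominated convergence under an absolutely continuous majorant) is already encoded in Lemma~\ref{lem:39}, while the passage from $\LL_\rho$-convergence to a.e.\ convergence is handled by Proposition~\ref{prop21}. The only point worth flagging is that norm convergence in $\LL_\rho$ is strictly weaker than a.e.\ convergence, so Lemma~\ref{lem:39} cannot be invoked directly on $(x_n)_{n=1}^\infty$; the subsequence-of-subsequence trick is precisely what circumvents this.
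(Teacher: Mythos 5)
Your argument is correct and coincides with the paper's own proof: the authors likewise reduce to showing that every subsequence has a further subsequence along which $\rho(\lVert \cdot\rVert)$ converges to $\rho(\lVert x\rVert)$, extracting the a.e.\ convergent subsequence via Proposition~\ref{prop21} and then applying Lemma~\ref{lem:39}. Your write-up simply makes explicit the details (inheritance of the majorant, the subsequence principle) that the paper leaves implicit.
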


\begin{proof}
It suffices to prove that any subsequence of $(x_n)_{n=1}^\infty$ has a further subsequence $(y_n)_{n=1}^\infty$ with $\lim_n \Vert y_n\Vert_\rho=\Vert x\Vert_\rho$. But this follows combining Proposition~\ref{prop21} with Lemma~\ref{lem:39}.
\end{proof}

\subsection{The role of lattice convexity and Minkowski-type inequalities}
Function spaces built from function quasi-norms have a lattice structure. 
Let $\rho$ be a function quasi-norm over a $\sigma$-finite measure space 
$(\Omega,\Sigma,\mu)$. 
Given $0<p\le \infty$, we say that $\rho$ is \emph{lattice $p$-convex }(resp.\ \emph{concave}) if $\LL_\rho$ is. Equivalently, $\rho$ is lattice $p$-convex (resp.\ concave) if and only if there is a constant $C$ such that $G \le C H$ (resp.\ $H\le CG$) for every $n\in\NN$ and $(f_j)_{j=1}^n$ in $L_0^+(\mu)$, where
\[
\textstyle
G=\rho((\sum_{j=1}^n f_j^p)^{1/p}), \quad H=(\sum_{j=1}^n \rho^p(f_j))^{1/p}.
\]
If the above holds for disjointly supported families, we say that $\rho$ satisfies an \emph{upper} (resp.\ \emph{lower}) $p$-estimate.

If $\rho$ is lattice $p$-convex, then it is $\overline p$-convex, where $\overline p=\min\{1,p\}$. The notions of $1$-convexity and lattice $1$-convexity are equivalent.
This identification does not extend to $p<1$ since there are function quasi-norms over $\NN$ which are lattice $p$-convex for no $p>0$ (see \cite{Kalton1986}). Kalton \cite{Kalton1984b} characterized quasi-Banach lattices (in particular, function quasi-norms) that are $p$-convex for some $p$ as those that are $L$-convex. We say that a function quasi-norm is \emph{$L$-convex} if there is $0<\varepsilon<1$ such that if $f$ and $(f_j)_{j=1}^n$ in $L_0^+(\mu)$ satisfy
\[
\max_{1\le j \le n} f_j \le f \quad \text{ and } \quad \frac{1}{n}\sum_{j=1}^n f_j \ge (1-\varepsilon)f,
\]
then $\max_{1\le j \le n} \rho(f_j)\ge \varepsilon \rho( f)$.

Given $0<r<\infty$, the \emph{$r$-convexified quasi-norm} $\rho^{(r)}$ is defined by
\[
\rho^{(r)}(f) =\rho^{1/r}( f^r ).
\]
It is straightforward to check that $\rho^{(r)}$ is a function quasi-norm. If $\rho$ has the Fatou (resp.\ weak Fatou) property, then $\rho^{(r)}$ does have. If $\rho$ is $p$-convex (resp.\ concave), then $\rho^{(r)}$ is $pr$-convex (resp.\ concave).
We set
\[
\LL_\rho^{(r)} = \LL_{\rho^{(r)}}.
\]
A question implicit in Section~\ref{subsect:DC} is whether any $p$-convex function quasi-norm with the weak Fatou property is equivalent to a function $p$-norm with the Fatou property. For function norms the answer to this question is positive, and its proof relies on using the associated gauge $\rho'$ given by
\[
\rho'(f)=\sup\left\{ \int_\Omega f g\, d\mu \colon g\in L_0^+(\mu), \ \rho(g)\le 1\right\}.
\]
In fact, we have the following.
\begin{lemma}[see \cite{BennettSharpley1988}*{Theorem 2.2}]
Let $\rho$ be a function quasi-norm fulfiling \ref{FQN:LC}. Then $\rho'$ is a function norm with the Fatou property.
\end{lemma}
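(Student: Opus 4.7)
The plan is to verify in turn each of the five axioms \ref{FQN:H}--\ref{FQN:CM}, \ref{FQN:QSA} defining a function norm for $\rho'$ (with modulus of concavity equal to $1$), and then check the Fatou property separately. Since $\rho'$ is defined by a supremum of integrals over a fixed test class, most of the axioms are essentially forced by the linearity/monotonicity of the Lebesgue integral on $[0,\infty]$; hypothesis \ref{FQN:LC} will only be used in one specific place.

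First I would dispatch \ref{FQN:H} (from $\int(tf)g\,d\mu=t\int fg\,d\mu$) and \ref{FQN:M} (from $f\le h$ a.e.\ implying $\int fg\,d\mu\le\int hg\,d\mu$ for any $g\ge 0$). Subadditivity with constant $1$, i.e.\ \ref{FQN:QSA}, I would obtain by writing, for any test function $g$ with $\rho(g)\le 1$,
\[
\int_\Omega (f_1+f_2)g\,d\mu=\int_\Omega f_1 g\,d\mu+\int_\Omega f_2 g\,d\mu\le \rho'(f_1)+\rho'(f_2),
\]
and then taking the supremum over $g$. The finiteness axiom \ref{FQN:SF} is exactly where \ref{FQN:LC} is used: for $E\in\Sigma(\mu)$ and $g\in L_0^+(\mu)$ with $\rho(g)\le 1$, one has $\int_\Omega \chi_E g\,d\mu=\int_E g\,d\mu\le C_E\rho(g)\le C_E$, so $\rho'(\chi_E)\le C_E<\infty$.

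For the remaining axiom \ref{FQN:CM}, fix $E\in\Sigma(\mu)$ and $\varepsilon>0$. Lemma~\ref{lem1} together with \ref{FQN:SF} applied to $\rho$ shows that $0<\rho(\chi_E)<\infty$, so $g_0:=\chi_E/\rho(\chi_E)$ is an admissible test function in the supremum defining $\rho'$. For any measurable $A\subseteq E$ one then has
\[
\rho'(\chi_A)\ge \int_\Omega \chi_A g_0\,d\mu=\frac{\mu(A)}{\rho(\chi_E)},
\]
so choosing $\delta:=\varepsilon/\rho(\chi_E)$ yields $\mu(A)\le\varepsilon$ whenever $\rho'(\chi_A)\le\delta$, as required.

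Finally, to establish the Fatou property, let $(f_n)_{n=1}^\infty$ be a non-decreasing sequence in $L_0^+(\mu)$ with pointwise limit $f$. For any $g\ge 0$ with $\rho(g)\le 1$, the monotone convergence theorem gives $\int_\Omega fg\,d\mu=\lim_n\int_\Omega f_n g\,d\mu\le\lim_n\rho'(f_n)$; taking the supremum over such $g$ yields $\rho'(f)\le\lim_n\rho'(f_n)$, while the reverse inequality $\lim_n\rho'(f_n)\le\rho'(f)$ is immediate from the monotonicity \ref{FQN:M} already established. I do not foresee a serious obstacle: the argument is entirely formal, and \ref{FQN:LC} enters only to ensure that characteristic functions of finite-measure sets lie in $\LL_{\rho'}$.
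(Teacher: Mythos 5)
Your verification is correct and complete; the paper itself dismisses this lemma as "a routine checking," and your argument is exactly the standard routine the authors had in mind, with \ref{FQN:LC} entering only for \ref{FQN:SF} and the normalized test function $\chi_E/\rho(\chi_E)$ handling \ref{FQN:CM}. (The only cosmetic remark: in the \ref{FQN:CM} step one should note that the case $\mu(E)=0$ is vacuous, since otherwise $\rho(\chi_E)$ could be $0$; for $\mu(E)>0$ positivity of $\rho(\chi_E)$ follows as you say.)
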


\begin{proof}
It is a routine checking.
\end{proof}

\begin{theorem}[cf.\, \cite{BennettSharpley1988} and \cite{Zaanen1983}*{Theorem 112.2}]\label{thm:dual}
Let $\rho$ be a function norm with the weak Fatou property. Suppose that $\rho$ satisfies \ref{FQN:LC}. Then $\rho''$ is equivalent to $\rho$. Moreover, if $\rho$ has the Fatou property, then $\rho''=\rho$.
\end{theorem}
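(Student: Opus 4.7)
The plan is to follow the classical Lorentz-Luxemburg template adapted to the present setting. The inequality $\rho''\le\rho$ holds unconditionally: by the very definition of $\rho'$, $\int_\Omega fg\,d\mu\le\rho(f)\rho'(g)$ for all $f,g\in L_0^+(\mu)$, and taking the supremum over $g$ with $\rho'(g)\le 1$ yields $\rho''(f)\le\rho(f)$. All the content is in the reverse inequality, which I would establish via a Hahn-Banach separation performed in the Banach space $L_1(E,\mu)$ for a set $E\in\Sigma(\mu)$ of finite measure. Throughout, one uses the continuous embedding $\LL_\rho\cap L_0(E)\hookrightarrow L_1(E)$ provided by \ref{FQN:LC}, together with the automatic Fatou property of associates (immediate from monotone convergence in the defining supremum $\rho''(f)=\sup\{\int fg\,d\mu : \rho'(g)\le 1\}$).

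Assume first the Fatou property for $\rho$. I would reduce the problem to bounded $f$ supported on $E\in\Sigma(\mu)$ via monotone truncations $f_n=(f\chi_{E_n})\wedge n$ along an exhaustion, invoking the Fatou property of both $\rho$ and $\rho''$. Fix such bounded $f$, $0<\epsilon<1$, and $\delta=(1-\epsilon)\rho(f)$, and consider in $L_1(E,\mu)$ the convex, symmetric set
\[
V=\{h\in L_1(E,\mu):\rho(|h|)\le\delta\}.
\]
The pivotal assertion is that $V$ is $L_1$-closed: an $L_1$-convergent sequence $(h_n)\subseteq V$ with limit $h$ admits an a.e.-convergent subsequence $(h_{n_k})$ by Proposition~\ref{prop21}, and Lemma~\ref{lem:35} then yields $\rho(|h|)\le\liminf_k\rho(|h_{n_k}|)\le\delta$. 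Since $f\notin V$, Hahn-Banach provides $g_0\in L_\infty(E)=L_1(E)^*$ with $\int hg_0\,d\mu\le 1$ on $V$ and $\int fg_0\,d\mu>1$. For each $k\in L_0^+(\mu)$ with $\rho(k)\le\delta$, applying the separation to $h=k\,\mathrm{sgn}(g_0)\in V$ gives $\int k\,|g_0|\,d\mu\le 1$, whence $\rho'(|g_0|)\le 1/\delta$. Rescaling, $\rho'(\delta|g_0|)\le 1$ and $\int f\cdot\delta|g_0|\,d\mu>\delta$, so $\rho''(f)>\delta=(1-\epsilon)\rho(f)$; letting $\epsilon\to 0^+$ gives $\rho''(f)\ge\rho(f)$.

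In the weak Fatou case, Proposition~\ref{prop:roughweakFatou} supplies rough Fatou with some constant $C\ge 1$. The set $V$ need not be $L_1$-closed, but applying the rough Fatou inequality to the monotone infima $g_m=\inf_{k\ge m}|h_{n_k}|\uparrow|h|$ (of an a.e.-convergent subsequence extracted from an $L_1$-limit in $V$) shows $\bar V\subseteq\{h:\rho(|h|)\le C\delta\}$. The same Hahn-Banach machinery, now separating $f$ from $\bar V$ whenever $\rho(f)>C\delta$, yields $\rho''(f)\ge\rho(f)/C$ for bounded $f$ on finite-measure sets; the truncation bootstrap (combining the Fatou property of $\rho''$ with rough Fatou of $\rho$) then gives $\rho(f)\le C^2\rho''(f)$ for general $f\in\LL_\rho^+$, proving the equivalence. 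The case $\rho(f)=\infty$ follows by the same truncation: $\rho''(f_n)\to\infty$ by the Fatou property of $\rho''$.

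The delicate point is the $L_1$-closure analysis of $V$, since it is precisely here that the Fatou (or rough Fatou) hypothesis converts an abstract Hahn-Banach separation into a quantitative comparison of $\rho$ and $\rho''$. A subsidiary technical step is the sign-correction $h\mapsto h\,\mathrm{sgn}(g_0)$, required because the separation inequality only controls $\int hg_0\,d\mu$ and not $\int h|g_0|\,d\mu$. The rest — the monotone truncation and the automatic Fatou property of $\rho''$ — is routine.
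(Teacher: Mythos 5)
Your argument is correct, and it is essentially the proof the paper is implicitly relying on: Theorem~\ref{thm:dual} is stated without proof, with pointers to \cite{BennettSharpley1988} and \cite{Zaanen1983}, and what you have written is precisely the classical Lorentz--Luxemburg duality argument used there (trivial inequality $\rho''\le\rho$ from the definition of the associate, reduction to bounded functions on finite-measure sets, Hahn--Banach separation of $f$ from the convex set $V$ inside $L_1(E,\mu)$ via \ref{FQN:LC}, and the sign correction to pass from $g_0$ to $|g_0|$). The only genuinely new ingredient relative to the textbook versions is the weak-Fatou case, and your handling of it --- replacing closedness of $V$ by the inclusion $\bar V\subseteq\{h:\rho(|h|)\le C\delta\}$ obtained from Proposition~\ref{prop:roughweakFatou}, at the price of the constant $C^2$ --- is exactly the right adaptation and is sound.
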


In the non-locally convex setting, it is hopeless to try to obtain full information for $\rho$ from the associated function norm $\rho'$. Nonetheless, the following is a partial positive answer to the aforementoned question.

\begin{proposition}\label{prop:73}
Let $0<p<\infty$ and let $\rho$ be a function quasi-norm over a $\sigma$-finite measure space $(\Omega,\Sigma,\mu)$. Suppose that $\rho$ is $p$-convex, has the weak Fatou property, and that for every $E\in\Sigma(\mu)$ there is a constant $C_E$ such that $\int_E f^{p}\, d\mu \le C_E \rho(f)$ for all $f\in L_0^+(\mu)$.
Then $\rho$ is equivalent to a function $p$-norm with the Fatou property. In fact, there is $G\subset L_0^+(\mu)$ such that $\rho$ is equivalent to the function quasi-norm $\lambda$ given by
\[
\lambda(f)=\sup_{g\in G} \left(\int_\Omega f^p g \, d\mu\right)^{1/p}.
\]
\end{proposition}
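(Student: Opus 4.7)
The plan is to reduce the statement to the known biduality for function norms (Theorem~\ref{thm:dual}) via the $p$-convexification machinery. The central device is $\sigma:=\rho^{(1/p)}$, which by definition satisfies $\sigma(f)=\rho^{p}(f^{1/p})$ and $\sigma^{(p)}=\rho$. Since $\rho$ is $p$-convex, $\sigma$ is $1$-convex; since $\rho$ has the weak Fatou property, so does $\sigma$, as both properties are preserved by convexification (as noted in the paper right after the definition of $\rho^{(r)}$).

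Next I would verify that $\sigma$ obeys the local domination \ref{FQN:LC}. Reading the hypothesis homogeneously as $\int_E f^{p}\,d\mu\le C_E\,\rho^{p}(f)$ and substituting $f=g^{1/p}$ yields
\[
\int_E g\,d\mu\le C_E\,\sigma(g)\quad\text{for every } g\in L_0^+(\mu),
\]
which is exactly \ref{FQN:LC} for $\sigma$. Using Lemma~\ref{lem:2} I can then replace $\sigma$ by an equivalent function norm $\sigma_0$; equivalence of function quasi-norms carries along both the weak Fatou property and \ref{FQN:LC}.

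Now I would feed $\sigma_0$ into Theorem~\ref{thm:dual}: it produces a function norm $\tau:=\sigma_0''$ equivalent to $\sigma_0$, with the Fatou property, and given explicitly by
\[
\tau(h)=\sup_{g\in G}\int_\Omega h\,g\,d\mu,\qquad G:=\{g\in L_0^+(\mu):\sigma_0'(g)\le 1\},
\]
where $\sigma_0'$ is itself a function norm with the Fatou property by the lemma preceding Theorem~\ref{thm:dual}. Setting $\lambda:=\tau^{(p)}$ produces
\[
\lambda(f)=\tau^{1/p}(f^{p})=\sup_{g\in G}\left(\int_\Omega f^{p}\,g\,d\mu\right)^{1/p},
\]
which is the announced formula. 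Since convexification preserves equivalence, $\lambda=\tau^{(p)}$ is equivalent to $\sigma^{(p)}=\rho$, and since it preserves the Fatou property, $\lambda$ has it. Finally, for $0<p\le 1$ the pointwise inequality $(u+v)^{p}\le u^{p}+v^{p}$ together with the subadditivity of $\tau$ gives $\lambda^{p}(u+v)\le\lambda^{p}(u)+\lambda^{p}(v)$, showing that $\lambda$ is a function $p$-norm.

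The main obstacle is the bookkeeping of the reduction: ensuring that the hypothesis translates cleanly to \ref{FQN:LC} after convexification, and tracking that equivalence, the weak Fatou property, and \ref{FQN:LC} all pass through the chain $\rho\leftrightarrow\sigma\leftrightarrow\sigma_0\leftrightarrow\tau\leftrightarrow\lambda$. Once that is in place, the statement is a direct consequence of the biduality theorem applied one level down in the convexity scale.
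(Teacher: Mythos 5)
Your proof is correct and follows the paper's own argument exactly: convexify to $\sigma=\rho^{(1/p)}$, pass to an equivalent function norm, apply the biduality theorem (Theorem~\ref{thm:dual}), and convexify back by $p$. Your homogeneous reading of the hypothesis as $\int_E f^{p}\,d\mu\le C_E\,\rho^{p}(f)$ is the intended one (it is what the paper's proof implicitly uses when asserting that $\sigma$ satisfies \ref{FQN:LC}), and your added verifications of the explicit formula for $\lambda$, the Fatou property, and the $p$-norm inequality for $0<p\le 1$ are details the paper's four-line proof leaves to the reader.
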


\begin{proof}
The function quasi-norm $\rho^{(1/p)}$ is $1$-convex and, then, equivalent to a function norm $\sigma$. The properties of $\rho$ yields that $\sigma$ satisfies \ref{FQN:LC} and has the weak Fatou property. By Theorem~\ref{thm:dual}, $\sigma$ is equivalent to the function norm $\sigma''$. Consequently, $\rho$ is equivalent to the function quasi-norm $\sigma''^{(p)}$.
\end{proof}

\begin{definition}
Let $\rho$ be a function quasi-norm over a $\sigma$-finite measure space 
$(\Omega,\Sigma,\mu)$, and let $(\Theta,\Ts,\nu)$ be another $\sigma$-finite measure space. 
Given $f\in L_0^+(\mu\otimes\nu)$ and $g\in L_0^+(\nu\otimes\mu)$ we set
\begin{align*}
\rho[1,f] &\colon \Theta \to [0,\infty], \quad 
\rho[1,f] (\theta) = \rho(f(\cdot,\theta)); \text{ and }\\
\rho[2,g] &\colon \Theta \to [0,\infty], \quad 
\rho[2,g](\theta) = \rho (g(\theta,\cdot)).
\end{align*}
\end{definition}

\begin{proposition}\label{prop:ms}
Let $\rho$ be a locally absolutely continuous function quasi-norm 
over a $\sigma$-finite measure space $(\Omega,\Sigma,\mu)$ with the Fatou property. 
Let $(\Theta,\Ts,\nu)$ be another $\sigma$-finite measure space. 
Let $f\in L_0^+(\mu\otimes\nu)$ and $g\in L_0^+(\nu\otimes\mu)$. 
Then $\rho[1,f]$ and $\rho[2,g]$ are measurable functions.
\end{proposition}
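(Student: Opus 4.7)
By symmetry (exchanging the roles of $(\Omega,\Sigma,\mu)$ and $(\Theta,\Ts,\nu)$), it suffices to prove that $\rho[1,f]$ is measurable for every $f\in L_0^+(\mu\otimes\nu)$. The plan is to proceed by a monotone class argument, starting from measurable rectangles.

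I would first treat $f=\sum_{i=1}^{n} c_i\chi_{A_i\times B_i}$, a finite combination over measurable rectangles with $A_i\in\Sigma$, $B_i\in\Ts$. Partitioning $\Theta$ into the $2^n$ measurable cells $T_S=\bigcap_{i\in S} B_i\cap\bigcap_{i\notin S} B_i^c$ for $S\subseteq\{1,\ldots,n\}$, the section $f(\cdot,\theta)=\sum_{i\in S} c_i\chi_{A_i}$ is a fixed function of $\omega$ on each $T_S$, so $\rho[1,f]$ is a simple function and hence measurable. To extend to $f=\chi_E$ for $E\in\Sigma\otimes\Ts$, I would fix a rectangle $R_0=A_0\times B_0$ of finite product measure and set $\mathcal{C}_{R_0}=\{E\subseteq R_0:\rho[1,\chi_E]\text{ is measurable}\}$. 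The preceding case puts the algebra of finite disjoint unions of rectangles inside $\mathcal{C}_{R_0}$. The Fatou property gives closure under monotone increasing unions (since $\rho(\chi_{(E_n)_\theta})\uparrow\rho(\chi_{E_\theta})$ whenever $E_n\uparrow E$), and local absolute continuity gives closure under monotone decreasing intersections (since $(E_1)_\theta\subseteq A_0\in\Sigma(\mu)$ forces $\chi_{(E_1)_\theta}$ to be absolutely continuous via the order-ideal structure of $\LL_\rho^{a}$). The monotone class theorem then identifies $\mathcal{C}_{R_0}$ with $(\Sigma\otimes\Ts)\cap\mathcal{P}(R_0)$, and $\sigma$-finiteness plus one more application of Fatou removes the restriction to $R_0$.

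For general $f\in L_0^+(\mu\otimes\nu)$, I would approximate from below by an increasing sequence of simple functions $f_n=\sum_k c_{k,n}\chi_{E_{k,n}}\uparrow f$ and invoke Fatou to deduce $\rho[1,f]=\sup_n\rho[1,f_n]$. The main obstacle is the intermediate step from the characteristic-function case to such simple functions: because $\rho$ is not linear, measurability of each $\rho[1,\chi_{E_{k,n}}]$ does not directly yield measurability of $\rho[1,f_n]$. I would resolve this by approximating each $E_i$ (for a fixed simple $f=\sum_i c_i\chi_{E_i}$ with disjoint $E_i$ of finite product measure) from outside by a decreasing sequence $V_{i,m}\downarrow E_i$ a.e.\ of countable disjoint unions of measurable rectangles of finite product measure, obtained from outer regularity of the Carath\'eodory extension. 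Setting $f^{(m)}=\sum_i c_i\chi_{V_{i,m}}$, one has $f^{(m)}\downarrow f$ a.e., because disjointness of the $E_i$ forces the overlaps $V_{i,m}\cap V_{j,m}$ to vanish in the limit. Measurability of $\rho[1,f^{(m)}]$ for each $m$ follows by truncating each $V_{i,m}$ to a finite union of rectangles (to which the combinatorial argument of the first paragraph applies) and invoking Fatou. Finally, local absolute continuity (applicable since $f^{(1)}(\cdot,\theta)$ is bounded with support of finite $\mu$-measure for $\nu$-a.e.\ $\theta$) gives $\rho[1,f^{(m)}]\to\rho[1,f]$ pointwise a.e., so $\rho[1,f]$ is measurable.
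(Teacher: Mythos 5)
Your proof is correct, but the way you get past the key obstacle (the non-linearity of $\rho$, which blocks the passage from indicators to general simple functions) is genuinely different from the paper's. The paper handles this with a parametrized monotone class combined with an induction on the number of values: for each non-negative simple $f$ it considers the class $\Mt_f$ of sets $E\in\Sigma\otimes\Ts$ such that the conclusion holds for $f+t\chi_E$ for every $t\ge 0$, shows (using the Fatou property for increasing limits and local absolute continuity for decreasing ones, exactly as you do) that $\Mt_f$ is a monotone class containing the rectangle algebra, and then bootstraps: if the result holds for all sums of a rectangle-measurable function and a function with at most $n-1$ positive values, the monotone class theorem upgrades this to $n$ values. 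You instead invoke outer regularity of the Carath\'eodory extension to squeeze a general simple function (with disjoint supports of finite product measure) between rectangle-simple functions from above, compute $\rho[1,\cdot]$ explicitly on the rectangle algebra by partitioning $\Theta$ into cells, and pass to the decreasing limit via local absolute continuity. Both routes consume the hypotheses in the same way (Fatou for increasing limits, local absolute continuity for decreasing ones); the paper's argument is more abstract and needs no regularity of the product measure, while yours is more concrete but must carry along finite-measure truncations, disjointification of rectangle covers, and the $\nu$-null sets on which $f^{(m)}\downarrow f$ fails or sections have infinite measure --- all of which you handle correctly, with the understanding (shared by the paper, which identifies a.e.\ coincident functions) that measurability of $\rho[1,f]$ means measurability up to a $\nu$-null set. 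One small economy: your second paragraph (the case $f=\chi_E$ via a monotone class on a fixed rectangle) is subsumed by the outer-regularity argument of your third paragraph and could be dropped.
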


\begin{proof}
It suffices to prove the result for $f$. The Fatou property yields that if the result holds for a non-decreasing sequence $(f_n)_{n=1}^\infty$, then it also holds for $\lim_n f_n$. Consequently, we can suppose that $\mu(\Omega)<\infty$ and that $f$ is a measurable simple function. Given a measurable simple positive function $f$ we denote by $\Mt_f$ the set consisting of all $E$ in the product $\sigma$-algebra $\Sigma\otimes\Ts$ such that the result holds for $f+t\chi_E$ for every $t\ge 0$. The absolute continuity and the Fatou property yields that $\Mt_f$ is a monotone class for any measurable simple function $f$. Therefore, if $\RRt$ denotes the algebra consisting of all finite disjoint unions of measurable rectangles, the monotone class theorem yields that $\RRt\subseteq \Mt_f$ implies $\Sigma\otimes\Ts\subseteq\Mt_f$. Let $\Cn_r$ denote the cone consisting of all positive functions measurable with respect to $\RRt$. Given $n\in\NN$, let $\Cn[n]$ be the cone consisting of all measurable non-negative functions which take at most $n-1$ different positive values. It is straightforward to check that the result holds for all functions in $\Cn_r=\Cn_r+\Cn[1]$. Suppose that the result holds for all functions in $\Cn_r+\Cn[n]$. Then $\RRt\subseteq \Mt_f$ for all $f\in \Cn_r+\Cn[n]$. Consequently, $\Sigma\otimes\Ts\subseteq\Mt_f$ for all $f\in \Cn_r+\Cn[n]$. In other words, the result holds for all functions in $\Cn_r+\Cn[n+1]$. By induction, the result holds for every $f\in \Cn:=\cup_{n=1}^\infty \Cn_r+\Cn[n]$. Since $\Cn$ is the cone consisting of all measurable simple non-negative functions, we are done.
\end{proof}

Proposition~\ref{prop:ms} allows us to iteratively apply function quasi-noms to measurable functions defined on product spaces. A \emph{Minkowski-type inequality} is an inequality that compares the gauges that appear when iterating in different ways.

\begin{definition}
Let $\rho$ and $\lambda$ be locally absolutely continuous function quasi-norms with the Fatou property over $\sigma$-finite measure spaces $(\Omega,\Sigma,\mu)$ and $(\Theta, \Ts,\nu)$ respectively. Given $f\in L_0^+(\mu\otimes\nu)$ we set
\[
(\rho,\lambda)[1,2](f)= \rho(\lambda[2,f]), \quad (\lambda,\rho)[2,1](f)= \lambda(\rho[1,f]).
\]
We say that the pair $(\rho,\lambda)$ has the \emph{Minkowski's integral inequality} (MII for short) \emph{property} if there is a constant $C$ such that
\[
(\rho,\lambda)[1,2](f) \le C (\lambda,\rho)[2,1](f)
\]
for all $f\in L_0^+(\mu\otimes\nu)$.
\end{definition}

The following result is obtained from the corresponding one for function norms \cite{Schep1995}. We do not know whether a direct proof which circumvent using lattice convexity is possible.
\begin{theorem}\label{thm:MIIpconvex}
Let $\rho$ and $\lambda$ be locally absolutely continuous $L$-convex function quasi-norms with the Fatou property. Then $(\rho,\lambda)$ has the MII property if and only if there is $0<p\le\infty$ such that $\lambda$ is lattice $p$-convex and $\rho$ is lattice $p$-concave.
\end{theorem}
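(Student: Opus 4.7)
The plan is to reduce the statement to its counterpart for function norms (Schep, \cite{Schep1995}) via $r$-convexification, exactly as the remark preceding the theorem suggests. Since $\rho$ and $\lambda$ are $L$-convex, Kalton's theorem \cite{Kalton1984b} yields $q>0$ for which both are lattice $q$-convex. Set $r:=1/q$, $\rho_0:=\rho^{(r)}$ and $\lambda_0:=\lambda^{(r)}$. By the rescaling of lattice convexity under convexification, $\rho_0$ and $\lambda_0$ are lattice $1$-convex, hence $1$-convex, hence by Lemma~\ref{lem:2} each is equivalent to a function norm; using Proposition~\ref{prop:73} we may even upgrade to equivalent function norms with the Fatou property. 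Local absolute continuity and the Fatou property are inherited by $\rho_0,\lambda_0$ from $\rho,\lambda$ because the map $g\mapsto g^{r}$ preserves monotone convergence a.e., so Schep's hypotheses are genuinely met.

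The next step is to show that the MII property is invariant under the convexification $\rho\mapsto\rho^{(r)}$. From $\rho^{(r)}(g)=\rho(g^{r})^{1/r}$ one computes
\[
\lambda^{(r)}[2,f](\omega) = \lambda(f(\omega,\cdot)^{r})^{1/r},
\qquad
\rho^{(r)}[1,f](\theta) = \rho(f(\cdot,\theta)^{r})^{1/r},
\]
whence
\[
(\rho^{(r)},\lambda^{(r)})[1,2](f) = (\rho,\lambda)[1,2](f^{r})^{1/r},
\qquad
(\lambda^{(r)},\rho^{(r)})[2,1](f) = (\lambda,\rho)[2,1](f^{r})^{1/r}.
\]
Since $g\mapsto g^{r}$ is a bijection of $L_0^+(\mu\otimes\nu)$, $(\rho_0,\lambda_0)$ satisfies MII with constant $C$ if and only if $(\rho,\lambda)$ satisfies MII with constant $C^{r}$. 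An analogous substitution applied to the defining inequalities of lattice convexity and concavity shows that $\lambda_0$ is lattice $p_0$-convex iff $\lambda$ is lattice $p_0q$-convex, and $\rho_0$ is lattice $p_0$-concave iff $\rho$ is lattice $p_0q$-concave.

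With this dictionary in hand, Schep's theorem applied to the function norms equivalent to $(\rho_0,\lambda_0)$ asserts that MII for $(\rho_0,\lambda_0)$ is equivalent to the existence of $p_0\in(0,\infty]$ for which $\lambda_0$ is lattice $p_0$-convex and $\rho_0$ is lattice $p_0$-concave. Transporting both sides through the equivalences above and setting $p:=p_0 q\in(0,\infty]$ yields the desired biconditional in both directions.

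The step I expect to be most delicate is verifying that the \emph{Fatou} property---not merely its rough or weak variant---is genuinely available for the function norms to which Schep's theorem is applied, since Fatou is known not to be preserved under equivalence of function quasi-norms. This is precisely the role played by Proposition~\ref{prop:73}, which allows us to replace $\rho_0$ and $\lambda_0$ by honest function norms with the full Fatou property, at which point the locally convex result becomes applicable verbatim; the remaining work is a chase through the convexification identities above.
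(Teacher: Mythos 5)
Your proposal is correct and follows essentially the same route as the paper: pick a common lattice convexity exponent via $L$-convexity, pass to the $1/q$-convexification to land in the locally convex setting, apply Schep's Theorems 2.3 and 2.5, and transport the MII property and the lattice convexity/concavity exponents back through the convexification identities. Your extra care about securing the genuine Fatou property for the equivalent function norms is a reasonable refinement of a point the paper's proof leaves implicit, but it does not change the argument.
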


\begin{proof}
Pick $0<s<\infty$ such that $\rho^{(s)}$ and $\lambda^{(s)}$ are $1$-convex. Since
\[
(\rho^{(s)},\lambda^{(s)})[1,2](f)=\left( (\rho,\lambda)[1,2](f^s)\right)^{1/s},
\]
$(\rho,\lambda)$ has the MII property if and only if $(\rho^{(s)},\lambda^{(s)})$ does have. It turn, by \cite{Schep1995}*{Theorems 2.3 and 2.5}, $(\rho^{(s)},\lambda^{(s)})$ has the MII property if and only if there is $q\in(0,\infty]$ such that $\lambda^{(s)}$ is lattice $q$-convex and $\rho^{(s)}$ is lattice $q$-concave. This latter condition is equivalent to the existence of $p\in(0,\infty]$ (related with $q$ by $q=sp$) as desired.
\end{proof}

Given $0<p<\infty$ and a $\sigma$-finite measure space $(\Omega,\Sigma,\mu)$, the Lebesgue space $L_p(\mu)$ is absolutely continuous and lattice $p$-convex. 
Moreover, if $\mu$ is infinite-dimensional, then $L_p(\mu)$ is not lattice $q$-concave for any $q<p$. Consequently, we have the following.
\begin{proposition}Let $0<p<\infty$ and $\rho$ be a locally absolutely continuous $L$-convex function quasi-norm over an infinite-dimensional $\sigma$-finite measure space. 
Given another $\sigma$-finite measure space $(\Omega,\Sigma,\mu)$ such that $L_0(\mu)$ is infinite-dimensional, the pair $(\rho,L_p(\mu))$ has the MII property if and only if $\rho$ is $p$-concave.
\end{proposition}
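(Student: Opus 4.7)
The plan is a direct application of Theorem~\ref{thm:MIIpconvex}, combined with well-known lattice convexity and concavity properties of Lebesgue spaces. Observe first that $\lambda:=L_p(\mu)$ is absolutely continuous (in particular locally absolutely continuous), has the Fatou property, and is lattice $p$-convex (and therefore $L$-convex), so $\lambda$ enters the hypotheses of Theorem~\ref{thm:MIIpconvex}; the hypotheses on $\rho$ are precisely those of the proposition.

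For the forward implication, suppose $(\rho,L_p(\mu))$ has the MII property. Theorem~\ref{thm:MIIpconvex} then produces some $0<r\le\infty$ such that $L_p(\mu)$ is lattice $r$-convex and $\rho$ is lattice $r$-concave. The key observation is that, since $(\Omega,\Sigma,\mu)$ is infinite-dimensional, $L_p(\mu)$ cannot be lattice $r$-convex for any $r>p$: choosing pairwise disjoint sets $(E_n)_{n=1}^{N}\subseteq\Sigma(\mu)$ of positive finite measure and testing lattice $r$-convexity on the family $(\chi_{E_n})_{n=1}^{N}$ (normalised to have $\lVert\chi_{E_n}\rVert_p=1$) reduces the putative inequality to $N^{1/p}\le C\, N^{1/r}$ for all $N\in\NN$, which is impossible when $r>p$. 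Hence $r\le p$, and then the standard hierarchy of lattice concavities for quasi-Banach lattices (a lattice $r$-concave quasi-Banach lattice is also $s$-concave for every $s\ge r$) upgrades $\rho$ to lattice $p$-concave.

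For the converse, assume $\rho$ is lattice $p$-concave. Then $L_p(\mu)$ is lattice $p$-convex and $\rho$ is lattice $p$-concave with the same common exponent $p$, so Theorem~\ref{thm:MIIpconvex} immediately yields that $(\rho,L_p(\mu))$ has the MII property. The only nontrivial ingredient is the failure of lattice $r$-convexity of $L_p(\mu)$ for $r>p$, which is precisely the step where infinite-dimensionality of $(\Omega,\Sigma,\mu)$ is used; everything else is a bookkeeping of the convexity and concavity hierarchies applied to the parameter produced by Theorem~\ref{thm:MIIpconvex}.
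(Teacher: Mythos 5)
Your proof is correct and follows the route the paper intends: apply Theorem~\ref{thm:MIIpconvex} to the pair $(\rho,L_p(\mu))$, note that $L_p(\mu)$ is locally absolutely continuous, has the Fatou property and is lattice $p$-convex, and then pin down the admissible common exponent. In fact your write-up supplies the fact that is actually needed for the forward implication --- that an infinite-dimensional $L_p(\mu)$ fails to be lattice $r$-convex for every $r>p$, verified on disjoint normalised indicators --- whereas the paper's one-line justification invokes the failure of lattice $q$-concavity of $L_p(\mu)$ for $q<p$, which is true but does not plug into Theorem~\ref{thm:MIIpconvex} as stated, since there it is the \emph{second} member of the pair that must be lattice $q$-convex; your version thus reads as a corrected completion of the paper's sketch. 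The remaining ingredient, that lattice $r$-concavity passes to lattice $s$-concavity for $s\ge r$, is standard and holds for function quasi-norms by a pointwise H\"older argument (with the same constant), so the bookkeeping in both directions is sound.
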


Another K\"othe space of interest for us is the weak Lorentz space $L_{1,\infty}(\mu)$ defined from the function quasi-norm
\[
f\mapsto\sup_{s>0} s \mu_f(s)=\sup_{s>0} s \mu\{\omega\in\Omega \colon f(\omega)\ge s\}, \quad f\in L_0^+(\mu).
\]
We will denote by $\Vert \cdot\Vert_{1,\infty}$ the quasi-norm in $L_{1,\infty}(\mu)$. We infer from the properties of the distribution function that $L_{1,\infty}(\mu)$ is continuous, has the Fatou property, and it is locally dominating. Kalton \cite{Kalton1980b} proved that then $L_{1,\infty}([0,1])$ is lattice $p$-convex for any $p<1$. We emphasize that the milestone paper \cite{Kalton1984b} allows to achieve this convexity result regardless the $\sigma$-finite measure space $(\Omega,\Sigma,\mu)$. In fact, given $0<p<1$, the $p^{-1/2}$-convexified of $L_{1,\infty}(\mu)$, namely the Lorentz space $L_{p^{-1/2},\infty}$, is locally convex \cite{Hunt1966}. Therefore, by \cite{Kalton1984b}*{Theorem 2.2}, $L_{p^{-1/2},\infty}$ is lattice $p^{1/2}$-convex. Consequently, $L_{1,\infty}(\mu)$ is lattice $p$-convex. Since $L_{1,\infty}(\mu)$ is not locally convex unless finite-dimensional \cite{Hunt1966}, we have the following.

\begin{theorem}\label{thm:WL}
Let $\rho$ be a locally absolutely continuous $L$-convex function quasi-norm, 
and let $(\Omega,\Sigma,\mu)$ an infinite-dimensional $\sigma$-finite measure space. 
Then $(\rho,L_{1,\infty}(\mu))$ has the MII property if and only if 
$\rho$ is $p$-concave for some $p<1$.
\end{theorem}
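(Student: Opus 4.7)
The plan is to derive Theorem~\ref{thm:WL} as a direct specialization of Theorem~\ref{thm:MIIpconvex} applied with $\lambda = L_{1,\infty}(\mu)$. First I would verify that $L_{1,\infty}(\mu)$ meets the hypotheses of Theorem~\ref{thm:MIIpconvex}: the paragraph preceding the statement records that $L_{1,\infty}(\mu)$ is continuous, has the Fatou property, and is locally dominating (hence locally absolutely continuous, since for a continuous function quasi-norm the two notions coincide by Proposition~\ref{prop:TCD} and the surrounding remark). Moreover, the same paragraph invokes Kalton's results to show that $L_{1,\infty}(\mu)$ is lattice $p$-convex for every $p<1$, which in particular makes it $L$-convex. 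With these hypotheses in place, Theorem~\ref{thm:MIIpconvex} translates the MII property for the pair $(\rho, L_{1,\infty}(\mu))$ into the existence of some $p \in (0,\infty]$ such that $L_{1,\infty}(\mu)$ is lattice $p$-convex and $\rho$ is lattice $p$-concave.

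For the sufficiency direction, suppose $\rho$ is $p$-concave for some $p<1$. For this same $p$, the preceding discussion gives that $L_{1,\infty}(\mu)$ is lattice $p$-convex, so the criterion furnished by Theorem~\ref{thm:MIIpconvex} is satisfied and the MII property follows.

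For the necessity direction, assume $(\rho, L_{1,\infty}(\mu))$ has the MII property, and let $p \in (0,\infty]$ be provided by Theorem~\ref{thm:MIIpconvex}. It suffices to exclude $p \geq 1$. If $p \geq 1$ then $\min\{1,p\}=1$, so from the general fact (recalled in Section~4) that lattice $p$-convex implies $\min\{1,p\}$-convex, and from the identification of $1$-convexity with local convexity, we would conclude that $L_{1,\infty}(\mu)$ is locally convex. But the final line of the discussion preceding the theorem states that $L_{1,\infty}(\mu)$ fails to be locally convex whenever the underlying measure space is infinite-dimensional, which is precisely our standing hypothesis. Hence $p<1$, and $\rho$ is $p$-concave for this $p<1$.

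Since the deep work has already been absorbed into Theorem~\ref{thm:MIIpconvex} and into the preparatory facts about $L_{1,\infty}(\mu)$ (continuity, Fatou property, local domination, and Kalton's lattice $p$-convexity for $p<1$), no substantial obstacle remains. The only delicate point is to make sure the infinite-dimensionality assumption on $(\Omega,\Sigma,\mu)$ is genuinely used, and it enters exactly at the step where we invoke non-local-convexity of $L_{1,\infty}(\mu)$ to rule out $p \geq 1$.
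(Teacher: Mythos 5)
Your proof is correct and follows essentially the same route the paper intends: Theorem~\ref{thm:WL} is presented as an immediate consequence of Theorem~\ref{thm:MIIpconvex} combined with the facts, recorded in the paragraph preceding the statement, that $L_{1,\infty}(\mu)$ is lattice $p$-convex for every $p<1$ yet fails to be locally convex when the measure space is infinite-dimensional, which is exactly how you argue both directions. The one caveat, inherited from the paper's own formulation rather than introduced by you, is that invoking Theorem~\ref{thm:MIIpconvex} also requires $\rho$ to have the Fatou property, a hypothesis that the statement of Theorem~\ref{thm:WL} omits but that your argument (like the paper's) tacitly assumes.
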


\subsection{Conditional expectation in quasi-Banach function spaces}
Given a sub-$\sigma$-algebra $\Sigma_0\subseteq\Sigma$, we denote by $L_0^+(\mu,\Sigma_0)$ the set consisting of all non-negative $\Sigma_0$-measurable functions. Given $f\in L_0^+(\mu)$ there is a unique $g\in L_0^+(\mu,\Sigma_0)$ such that $\int_A f\, d\mu=\int_A g \, d\mu$ for all $A\in\Sigma_0$. We say that $g$ is the \emph{conditional expectation} of $f$ with respect to $\Sigma_0$, and we denote $\EE(f,\Sigma_0):=g$.

\begin{definition}
Let $\rho$ be a function quasi-norm over a $\sigma$-finite measure space $(\Omega,\Sigma,\mu)$. We say that $\rho$ is \emph{leveling} if there is a constant $C$ such that $\rho(\EE(f,\Sigma_0))\le C \rho(f)$ for every finite sub-$\sigma$-algebra $\Sigma_0$ and every $f\in L_0^+(\mu)$.
\end{definition}

This terminology follows that used in \cite{EH1953}. We remark that Ellis and Halperin imposed leveling function norms to satisfy the above definition with $C=1$. Not imposing conditional expectations to be contractive turns the notion stable under equivalence.

Given a function quasi-norm $\rho$, a sub-$\sigma$-algebra $\Sigma_0$, and a quasi-Banach space $\XX$, we denote by $\LL_\rho(\Sigma_0,\XX)$ the space consisting of all $\Sigma_0$-measurable functions in $\LL_\rho(\XX)$. Note that, if $\rho|_{\Sigma_0}$ is the restriction of $\rho$ to $\Sigma_0$, then $\LL_\rho(\Sigma_0,\XX)=\LL_{\rho|_{\Sigma_0}}(\XX)$. For further reference, we write down an elementary result.

\begin{lemma}\label{lem:CE}
Let $\rho$ be a leveling function quasi-norm over a $\sigma$-finite measure space 
$(\Omega,\Sigma,\mu)$. Then there is a constant $C$ such that for any 
finite sub-$\sigma$-algebra $\Sigma_0$ there is positive projection 
$T\colon \LL_\rho\to \LL_\rho(\Sigma_0)$ such that $\Vert T\Vert\le C$ 
and $\int_A f\, d\mu=\int_A T(f)\, d\mu$ whenever $f\ge 0$ or $\int_A |f|\, d\mu<\infty$.
\end{lemma}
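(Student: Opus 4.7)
The plan is to take $T$ to be (the quasi-Banach-function-space version of) the conditional expectation operator $f\mapsto\EE(f,\Sigma_0)$. Since $\Sigma_0$ is a finite $\sigma$-algebra, it is generated by finitely many atoms $A_1,\dots,A_n$ that partition $\Omega$; discarding the null atoms, we may assume $\mu(A_i)>0$ for every $i$. For $f\in\LL_\rho^+$ the formula I would use is
\[
T(f)=\sum_{0<\mu(A_i)<\infty}\frac{1}{\mu(A_i)}\Big(\int_{A_i}f\,d\mu\Big)\chi_{A_i}+\sum_{\mu(A_i)=\infty}c_i(f)\chi_{A_i},
\]
where on an infinite-measure atom I set $c_i(f)=0$ if $\int_{A_i}f\,d\mu\in\{0,\infty\}$, and choose $c_i(f)\in(0,\infty)$ with $c_i(f)\,\mu(A_i)=\int_{A_i}f\,d\mu$ otherwise. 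Since this coincides with $\EE(f,\Sigma_0)$, the leveling hypothesis gives $\rho(T(f))\le C\rho(f)$; in particular $T(f)$ is finite a.e.\ by Lemma~\ref{lem1}~\ref{lem1:2}, so every $c_i(f)$ is finite and the formula is well defined. I would then extend $T$ to all of $\LL_\rho$ by linearity, splitting a general $f$ into its real/imaginary positive/negative parts, each of which is pointwise dominated by $\lvert f\rvert$ and so sits in $\LL_\rho^+$.

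For boundedness the key observation is the pointwise domination $\lvert T(f)\rvert\le T(\lvert f\rvert)$, which on each finite-measure atom is just $\big\lvert\frac{1}{\mu(A_i)}\int_{A_i}f\,d\mu\big\rvert\le\frac{1}{\mu(A_i)}\int_{A_i}\lvert f\rvert\,d\mu$. Combining this with \ref{FQN:M} and the leveling inequality applied to $\lvert f\rvert$ yields
\[
\lVert T(f)\rVert_\rho=\rho(\lvert T(f)\rvert)\le\rho(T(\lvert f\rvert))\le C\,\rho(\lvert f\rvert)=C\lVert f\rVert_\rho,
\]
so $\lVert T\rVert\le C$ with the same $C$ for every finite $\Sigma_0$. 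The projection identity $T\circ T=T$ then holds because any $g\in\LL_\rho(\Sigma_0)$ is constant on each atom and averaging a constant returns that constant; positivity is immediate from the formula.

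The integral identity holds atomwise by construction, i.e.\ $\int_{A_i}T(f)\,d\mu=\int_{A_i}f\,d\mu$ for $f\ge 0$, and therefore on every $A\in\Sigma_0$ by additivity, since each such $A$ is a finite disjoint union of atoms; the case $\int_A\lvert f\rvert\,d\mu<\infty$ then follows by splitting $f$ as above and invoking linearity. The main obstacle I foresee is the genuine ambiguity of $\EE(f,\Sigma_0)$ on atoms of infinite $\mu$-measure: either one adopts the convention above and checks its consistency with both the integral identity and the $\rho$-bound, or one reads the leveling hypothesis as referring only to finite $\Sigma_0$ whose atoms all have finite measure, to which the general case reduces via $\sigma$-finiteness. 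Once that point is settled, every remaining item on the lemma's list is a routine verification.
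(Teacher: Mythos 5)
The paper states this lemma without proof (it is introduced as ``an elementary result'' recorded for further reference), and your construction --- conditional expectation realized as atom-wise averaging, with the leveling hypothesis supplying the uniform bound $C$, and positivity, idempotence and the integral identity read off from the explicit formula --- is exactly the intended elementary argument. On atoms of finite positive measure everything you write checks out: note that $\int_{A_i}\lvert f\rvert\,d\mu<\infty$ for every $f\in\LL_\rho$ because leveling implies \ref{FQN:LC} (Lemma~\ref{lem:LevInt}, whose proof uses only the definition of leveling and not the present lemma, so there is no circularity), hence the averages exist, $T$ is linear, $\lvert T(f)\rvert\le T(\lvert f\rvert)$ pointwise, and $\lVert T\rVert\le C$ follows from \ref{FQN:M} and the leveling inequality as you say.

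The one genuine defect is your convention on an atom $A_i$ with $\mu(A_i)=\infty$: when $0<\int_{A_i}f\,d\mu<\infty$ there is no $c_i(f)\in(0,\infty)$ with $c_i(f)\,\mu(A_i)=\int_{A_i}f\,d\mu$, since the left-hand side equals $\infty$ for every such $c_i(f)$. Worse, no $\Sigma_0$-measurable candidate whatsoever can satisfy the integral identity on such an atom, because a constant integrated over a set of infinite measure gives $0$ or $\infty$; so the first of your two proposed resolutions cannot be ``checked for consistency'' --- it is vacuous. The second resolution is the right one: interpret the lemma (and the definition of leveling) for finite sub-$\sigma$-algebras whose atoms have finite measure, setting $T(f)=0$ on any residual infinite-measure atom and asserting the integral identity only for $A$ contained in the union of the finite-measure atoms. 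This costs nothing, since Lemma~\ref{lem:CE} is invoked in the proof of Theorem~\ref{thm:one-to-one} only after reducing to the case $\mu(\Omega)<\infty$, where every atom automatically has finite measure. With that reading fixed, your proof is complete.
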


\begin{definition}
If $\rho$, $\Sigma_0$ and $T$ are as in Lemma~\ref{lem:CE}, we denote $\EE[\rho,\Sigma_0]:=T$.
\end{definition}

\begin{lemma}\label{lem:LevInt}
Leveling function quasi-norms satisfy \ref{FQN:LC}.
\end{lemma}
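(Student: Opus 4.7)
The plan is to apply the leveling hypothesis to the simplest finite sub-$\sigma$-algebra that distinguishes $E$, namely $\Sigma_0:=\{\emptyset, E, \Omega\setminus E, \Omega\}$, combined with a truncation trick to control the conditional expectation. Fix $E\in\Sigma(\mu)$; the estimate is trivial if $\mu(E)=0$, so assume $\mu(E)>0$. For $f\in L_0^+(\mu)$, I would replace $f$ by the truncation $\tilde f:=f\chi_E$, which vanishes on $\Omega\setminus E$.

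The key computation is that $\EE(\tilde f,\Sigma_0)$ is uniquely determined: being $\Sigma_0$-measurable it is constant on each atom, say $a\chi_E + b\chi_{\Omega\setminus E}$, and the defining identity forces $a\mu(E) = \int_E f\,d\mu$ and $b\mu(\Omega\setminus E)=0$. The latter gives $b=0$ regardless of whether $\mu(\Omega\setminus E)$ is finite or infinite. Hence
\[
\EE(\tilde f,\Sigma_0) = \frac{1}{\mu(E)}\left(\int_E f\,d\mu\right)\chi_E.
\]
Next, I would invoke the leveling hypothesis for $\tilde f$, then \ref{FQN:H} on the left and \ref{FQN:M} (since $\tilde f\le f$) on the right, to obtain
\[
\frac{\rho(\chi_E)}{\mu(E)}\int_E f\,d\mu \;=\; \rho(\EE(\tilde f,\Sigma_0)) \;\le\; C\,\rho(\tilde f) \;\le\; C\,\rho(f),
\]
where $C$ is the constant in the definition of leveling. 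Since the remark after the definition of function quasi-norm guarantees $\rho(\chi_E)>0$, rearranging gives \ref{FQN:LC} with $C_E := C\mu(E)/\rho(\chi_E)$.

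The main obstacle I would anticipate is making sense of the conditional expectation when $\mu(\Omega\setminus E)=\infty$, in which case $(\Omega,\Sigma_0,\mu|_{\Sigma_0})$ fails to be $\sigma$-finite. The truncation $\tilde f = f\chi_E$ is precisely the device that bypasses this difficulty: a non-vanishing value on the infinite-measure atom would produce an infinite integral there, contradicting $\int_{\Omega\setminus E}\tilde f\,d\mu = 0$, so the conditional expectation is forced to live entirely on $E$. Once this is settled the rest reduces to routine manipulation with \ref{FQN:H}, \ref{FQN:M}, and \ref{FQN:CM}.
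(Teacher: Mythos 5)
Your proof is correct and follows essentially the same route as the paper: apply the leveling estimate to the four-element $\sigma$-algebra generated by $E$ and use that $\EE(\cdot,\Sigma_0)$ is the constant $\frac{1}{\mu(E)}\int_E f\,d\mu$ on $E$. The only difference is your truncation $\tilde f=f\chi_E$, which is a sensible refinement the paper omits, since it sidesteps the question of how to define the conditional expectation on the atom $\Omega\setminus E$ when that atom has infinite measure.
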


\begin{proof}
Let $\rho$ be a leveling function quasi-norm over a $\sigma$-finite measure space 
$(\Omega,\Sigma,\mu)$.
Given $A\in\Sigma(\mu)$ with $\mu(A)>0$, let $\Sigma_0$ be the smallest $\sigma$-algebra containing $A$. For all $f\in L_0^+(\mu)$ we have
\[
\int_A f\, d\mu \le \frac{\mu(A)}{\rho(\chi_A)} \rho (\EE(f,\Sigma_0)) \le C \frac{\mu(A)}{\rho(\chi_A)} \rho(f).\qedhere
\]
\end{proof}

It is known that, if $q\ge 1$, $L_q(\mu)$ has the conditional expectation property. Locally convex Lorentz and Orlicz spaces do have. More generally, we have the following. Recall that a measure space is said to be \emph{resonant} if either is non-atomic or it consists of equi-measurable atoms.

\begin{theorem}
Let $\rho$ be a rearrangement invariant function norm over a resonant measure space. If $\rho$ satisfies \ref{FQN:LC}, then it is leveling.
\end{theorem}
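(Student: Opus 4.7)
The plan is to derive this from the Hardy--Littlewood--Pólya (HLP) principle for rearrangement invariant norms on resonant spaces, combined with the classical fact that conditional expectation is an HLP contraction.

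First I would reduce to non-negative $f$, and fix a finite sub-$\sigma$-algebra $\Sigma_0$ generated by a partition $\{A_1,\dots,A_N\}$. Using $\sigma$-finiteness and \ref{FQN:LC}, we may assume $\mu(A_i)<\infty$ (otherwise $\EE(f,\Sigma_0)$ vanishes on such atoms). Write $\EE(f,\Sigma_0)=\sum_i c_i \chi_{A_i}$ with $c_i=\mu(A_i)^{-1}\int_{A_i}f\,d\mu$. The first key step is to verify the HLP majorization
\[
\int_0^t (\EE(f,\Sigma_0))^*(s)\,ds \le \int_0^t f^*(s)\,ds \quad (t>0). \tag{$\ast$}
\]
This is classical: after relabeling so that $c_1\ge c_2\ge\cdots$, the left hand side is piecewise linear, and on each interval $[\sum_{j<k}\mu(A_j),\sum_{j\le k}\mu(A_j)]$ it represents the integral of $f$ over a set of measure $t$, hence is bounded by $\int_0^t f^*$. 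Equivalently, $\EE(\cdot,\Sigma_0)$ is a positive contraction on both $L_1$ and $L_\infty$, and any such operator satisfies $(\ast)$ by Calderón's lemma.

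Next I would invoke the HLP principle: on a resonant $\sigma$-finite measure space, for a rearrangement invariant function norm $\sigma$ with the Fatou property, $(\ast)$ implies $\sigma(\EE(f,\Sigma_0))\le \sigma(f)$. The proof uses the associated norm $\sigma'$: for $h\ge 0$ with $\sigma'(h)\le 1$, Hardy's lemma gives $\int_0^\infty(\EE(f,\Sigma_0))^* h^*\,ds \le \int_0^\infty f^* h^*\,ds$, which by resonance equals $\sup\{\int fk\,d\mu : k\sim h\}\le \sigma(f)$, since $\sigma'$ (being r.i.) assigns the same value to equimeasurable $k$. Taking the supremum over $h$ yields $\sigma''(\EE(f,\Sigma_0))\le \sigma(f)$, and the Fatou property lets one replace $\sigma''$ by $\sigma$ via Theorem~\ref{thm:dual}.

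Finally I would apply this to $\rho$. By Lemma~\ref{lem:LevInt} applied to $\rho$, and the lemma preceding Theorem~\ref{thm:dual}, the associated norm $\rho'$ is a function norm with the Fatou property (and clearly r.i. and satisfying \ref{FQN:LC}); hence $\rho''$ is a rearrangement invariant function norm with the Fatou property, dominated by $\rho$. Applying Step 2 to $\sigma=\rho''$ gives $\rho''(\EE(f,\Sigma_0))\le \rho''(f)\le \rho(f)$. Since $\EE(f,\Sigma_0)$ is a simple function supported on the finitely many atoms of $\Sigma_0$, one checks that on such simple functions $\rho$ and $\rho''$ coincide (both values are attained as suprema over $h$ with $\rho'(h)\le 1$, via testing against finitely many indicators), giving $\rho(\EE(f,\Sigma_0))\le \rho(f)$ and hence leveling with constant $1$.

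The main obstacle is the last identification $\rho=\rho''$ on simple functions without assuming a Fatou-type property on $\rho$ itself: this is where the combination of \ref{FQN:LC} with resonance is essential, since on a resonant space r.i.\ function norms are determined on simple functions by their values on decreasing indicators (the ``fundamental function''), and the duality $\rho=\rho''$ on such indicators is exactly what \ref{FQN:LC} ensures.
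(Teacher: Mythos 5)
Your Steps 1--3 are sound, and they essentially reconstruct from scratch what the paper obtains by citing the Calder\'on--Mitjagin theorem as a black box: the paper's proof just observes that $\LL_\rho$ is an interpolation space between $L_1$ and $L_\infty$ over a resonant space and that conditional expectations are simultaneous contractions on that couple. Your HLP majorization $(\ast)$, the use of Hardy's lemma and resonance to obtain $\rho''(\EE(f,\Sigma_0))\le\rho''(f)\le\rho(f)$, and the observation that $\rho'$ (hence $\rho''$) automatically carries the Fatou property are all correct. (The citation of Lemma~\ref{lem:LevInt} is backwards --- that lemma deduces \ref{FQN:LC} \emph{from} leveling; what you actually need is the paper's lemma stating that \ref{FQN:LC} makes $\rho'$ a function norm with the Fatou property --- but this is cosmetic.)

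The genuine gap is the final step, $\rho(\EE(f,\Sigma_0))\le C\,\rho''(\EE(f,\Sigma_0))$. The justification offered does not work: a rearrangement invariant function norm is \emph{not} determined on simple functions by its fundamental function ($L_{p,1}$ and $L_{p,\infty}$ share the fundamental function $t^{1/p}$ yet differ by a factor of order $N$ on simple functions with $N$ dyadic levels), and the parenthetical claim that $\rho(g)$ ``is attained as a supremum over $h$ with $\rho'(h)\le 1$'' is precisely the Lorentz--Luxemburg identity $\rho(g)=\rho''(g)$ that you are trying to prove, so the argument is circular. By Theorem~\ref{thm:dual} the equivalence $\rho\sim\rho''$ is tied to the weak Fatou property, which is not among the hypotheses, and you have not shown that the inequality survives even on simple functions, even up to a constant (which is all leveling requires). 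To be fair, the paper's own two-line proof inherits the same issue, since Bennett--Sharpley's function norms have the Fatou property built into their definition; the clean repair for both arguments is to add the weak Fatou property to the hypotheses, after which Theorem~\ref{thm:dual} gives $\rho\le C\rho''$ everywhere and your proof closes immediately, yielding leveling with constant $C$ rather than $1$.
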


\begin{proof}
By Calder\'on-Mitjagin Theorem (see \cites{Calderon1966, Mitjagin1965}, and also \cite{BennettSharpley1988}*{Theorem~2.2}), $\LL_\rho$ is an interpolation space between $L_1$ and $L_\infty$. Since both $L_1$ and $L_\infty$ are leveling, the result follows by interpolation.
\end{proof}

\subsection{Function quasi-norms over $\NN$}
Suppose that $\rho$ is a function quasi-norm over $\NN$ endowed with the counting measure. In this particular case, $\rho$ is locally dominating, and the space of integrable simple functions is the space $c_{00}$ consisting of all eventually null sequences. Concerning the density of $c_{00}$ in $\LL_\rho$ we have the following.

\begin{proposition}\label{prop:looSE}
Let $\rho$ be a function quasi-norm over $\NN$. Then $\rho$ is not minimal if and only if $\ell_\infty$ is a subspace of $\LL_\rho$, in which case $\LL_\rho$ has block basic sequence equivalent to the unit vector system of $\ell_\infty$.
\end{proposition}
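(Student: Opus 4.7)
The plan splits into an easy direction and a non-trivial one. The implication ``$\ell_\infty \hookrightarrow \LL_\rho \Rightarrow \rho$ is not minimal'' is a separability argument: $\LL_\rho$ inherits non-separability from $\ell_\infty$, while $\LL_\rho^b$ is the closure of the separable space $c_{00}$, so $\LL_\rho^b \subsetneq \LL_\rho$.

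For the converse, I would first pick $f \in \LL_\rho^+ \setminus \LL_\rho^b$ (which exists because $\LL_\rho^b$ is an order ideal). Because $f\chi_{[1,N]} \in c_{00} \subseteq \LL_\rho^b$ while $f$ is bounded away from $\LL_\rho^b$ in quasi-norm, the non-increasing sequence $c_N := \rho(f\chi_{(N,\infty)})$ decreases to some $a > 0$. The heart of the argument is then to produce disjointly supported finite blocks of $f$ with uniformly positive quasi-norm. To this end, consider the non-increasing quantity $b_N := \sup_{M > N} \rho(f\chi_{(N, M]}) \le c_N$ and set $\beta := \lim_N b_N$. I claim $\beta > 0$: otherwise the partial sums $f\chi_{[1,M]}$ would be Cauchy in $\LL_\rho$, since
\[
\rho\bigl(f\chi_{[1,M']} - f\chi_{[1,M]}\bigr) = \rho(f\chi_{(M,M']}) \le b_M \xrightarrow[M\to\infty]{} 0,
\]
and completeness of $\LL_\rho$ combined with Proposition~\ref{prop21} would force their $\rho$-limit to coincide with the pointwise limit $f$, contradicting $\rho(f - f\chi_{[1,M]}) = c_M \ge a > 0$. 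With $\beta > 0$ in hand, an iterative selection yields disjoint finite intervals $A_k \subset \NN$ with $\rho(f\chi_{A_k}) \ge \beta/2$.

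Setting $u_k := f\chi_{A_k}$, disjointness together with the bound $\sum_k u_k \le f$ gives
\[
\tfrac{\beta}{2}\|(a_k)\|_\infty \le \rho\Bigl(\sum_k a_k u_k\Bigr) \le \rho(f)\|(a_k)\|_\infty
\]
for all $(a_k) \in c_{00}$, so $(u_k)$ is equivalent to the unit vector system of $\ell_\infty$. Moreover, the map $T \colon \ell_\infty \to \LL_\rho$ defined pointwise by $T(a)(n) := a_k f(n)$ for $n \in A_k$ and $T(a)(n) := 0$ elsewhere is dominated by $\|(a_k)\|_\infty f$, hence bounded, and the same lower estimate extends to show that $T$ is an isomorphic embedding of $\ell_\infty$ into $\LL_\rho$. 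The most delicate step is the Cauchy-sequence argument forcing $\beta > 0$: it relies crucially on the completeness of $\LL_\rho$ and on Proposition~\ref{prop21} to identify the $\rho$-limit by pointwise convergence. Once this point is cleared, the block construction and the $\ell_\infty$-embedding are routine.
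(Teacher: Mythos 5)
Your overall strategy coincides with the paper's: the paper packages the extraction of disjointly supported finite blocks with $\rho$-norm bounded below into Lemma~\ref{lem:NS} (whose one-line proof is ``use that the series $\sum_n a_n\,\ee_n$ does not converge''), and then, exactly as you do, exploits disjointness together with domination by $f$ to get equivalence to the unit vector system of $\ell_\infty$. Your separability argument for the easy direction (which the paper does not even write out) and your lattice estimates at the end are fine.

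The genuine issue is the step forcing $\beta>0$. From $\beta=0$ you deduce that the partial sums $f\chi_{[1,M]}$ are Cauchy and then invoke completeness of $\LL_\rho$ to conclude that they converge (necessarily to $f$, by Proposition~\ref{prop21}). But completeness of $\LL_\rho$ is not among the hypotheses, and it is not automatic: by the Riesz--Fischer proposition of Section~\ref{sect:fqn}, $\LL_\rho$ is complete if and only if $\rho$ has the Riesz--Fischer $p$-property, which a general function quasi-norm over $\NN$ need not enjoy. Without completeness (or a Fatou-type hypothesis) the step actually fails: for the function norm $\rho(g)=\sup_n g_n+\limsup_n n\,g_n$ and $f=(1/n)_{n=1}^\infty$ one has $\rho(f\chi_{(M,M']})\le 1/(M+1)$ for every finite block, so $\beta=0$, while $\rho(f\chi_{(M,\infty)})\ge 1$ for all $M$, so $f\in\LL_\rho\setminus\LL_\rho^b$ and no disjoint finite blocks of $f$ have norms bounded below. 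To be fair, the paper's Lemma~\ref{lem:NS} conceals exactly the same assumption (non-convergence of a series implies failure of the Cauchy condition only in a complete space), so you have faithfully reproduced the paper's argument, weak point included. Your proof becomes correct verbatim if one adds either completeness of $\LL_\rho$ or the weak Fatou property: in the latter case Proposition~\ref{prop:roughweakFatou} gives $\rho(f\chi_{(M,\infty)})\le C\sup_{M'>M}\rho(f\chi_{(M,M']})$, whence $\beta\ge a/C>0$ directly, with no appeal to completeness or to Proposition~\ref{prop21} at all.
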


Before tackling the proof of Proposition~\ref{prop:looSE} we give an auxiliary lemma that will be used a couple of times.

\begin{lemma}\label{lem:NS}
Let $\rho$ be a function quasi-norm over $\NN$ and 
let $(a_n)_{n=1}^\infty$ be a sequence in $\LL_\rho$. 
Then $(a_n)_{n=1}^\infty$ does not belong to $\LL^b_\rho$ if and only 
there is an increasing sequence $(m_k)_{k=1}^\infty$ of non-negative integers such that
\[
\textstyle
\inf_{k\in\NN} \rho ( ( |a_n|)_{n=1+m_{2k-1}}^{m_{2k}} )>0.
\]
\end{lemma}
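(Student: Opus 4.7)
The plan is to reduce the lemma to a tail-convergence criterion for $\LL_\rho^b$ and then exploit completeness of $\LL_\rho$ to extract the sequence $(m_k)$ from the failure of Cauchyness of the truncations. Since $\Sp(\mu)=c_{00}$ in our setting, for any $s\in c_{00}$ with $\operatorname{supp}(s)\subseteq\{1,\dots,N\}$ we have $|a-s|\ge|a|\chi_{\{n>N\}}$ pointwise, with equality realised by the truncation $s=a\chi_{\{1,\dots,N\}}$. It follows that
\[
a\in\LL_\rho^b\iff \rho(|a|\chi_{\{n>N\}})\to 0\text{ as }N\to\infty,
\]
and, since the tail norm is non-increasing in $N$, the negation is $L:=\inf_N \rho(|a|\chi_{\{n>N\}})>0$.

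The direction $(\Leftarrow)$ is immediate: given $(m_k)$ with $\delta:=\inf_k\rho(|a|\chi_{(m_{2k-1},m_{2k}]})>0$, the monotonicity of $\rho$ yields $\rho(|a|\chi_{\{n>m_{2k-1}\}})\ge\delta$ for all $k$; since $m_{2k-1}\to\infty$, we conclude $L\ge\delta>0$ and so $a\notin\LL_\rho^b$.

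For $(\Rightarrow)$ I would consider the truncations $a_N:=a\chi_{\{1,\dots,N\}}\in c_{00}$, which converge pointwise to $a$ but fail to converge in $\LL_\rho$ by hypothesis. The key claim is that $(a_N)_N$ is not Cauchy in $\LL_\rho$: otherwise, invoking Proposition~\ref{prop:AOFQN} and Lemma~\ref{lem:2} to replace $\rho$ with an equivalent function $p$-norm under which $\LL_\rho$ is a quasi-Banach space, $(a_N)_N$ would admit a limit $\tilde a\in\LL_\rho$; Proposition~\ref{prop21} would then yield a subsequence converging pointwise a.e.\ to $\tilde a$, forcing $\tilde a=a$ a.e.\ and therefore $\rho(|a-a_N|)\to 0$, contradicting $a\notin\LL_\rho^b$. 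Hence there exist $\delta>0$ and arbitrarily large pairs $N<M$ with $\rho(|a|\chi_{(N,M]})=\rho(|a_N-a_M|)\ge\delta$; a routine extraction yields indices $N_1<M_1\le N_2<M_2\le\cdots$ satisfying this bound, and setting $m_{2k-1}:=N_k$, $m_{2k}:=M_k$ produces the increasing sequence claimed in the lemma.

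The main obstacle will be justifying the completeness of $\LL_\rho$ invoked in the Cauchy-to-convergent step: this depends on the Riesz-Fischer property, which in the paper's standing framework is secured by combining the Aoki-Rolewicz reduction to a $p$-norm with the Fatou-type hypothesis; once completeness is in hand, the remainder is a clean combinatorial extraction relying only on monotonicity of $\rho$.
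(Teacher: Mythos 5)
Your reduction of membership in $\LL_\rho^b$ to the vanishing of the tails $\rho(|a|\chi_{\{n>N\}})$, and your proof of the backward implication, are correct and follow the same route as the paper, whose entire proof is the observation that $a\in\LL_\rho\setminus\LL_\rho^b$ if and only if the series $\sum_n a_n\,\ee_n$ does not converge. The gap is in the forward direction, at the step ``$(a_N)_N$ not convergent $\Rightarrow$ $(a_N)_N$ not Cauchy.'' You justify it by claiming that $\LL_\rho$ becomes a quasi-Banach space after replacing $\rho$ by an equivalent function $p$-norm via Proposition~\ref{prop:AOFQN} and Lemma~\ref{lem:2}. Passing to an equivalent quasi-norm cannot create completeness, and being a function $p$-norm does not imply it either: by the proposition in Section~\ref{sect:Fatou} characterizing completeness, $\LL_\rho$ is quasi-Banach exactly when $\rho$ has the Riesz--Fischer $p$-property, which is obtained there from the weak Fatou property --- a hypothesis that appears neither in the statement of the lemma nor as a standing assumption (the paper explicitly declines to impose Fatou a priori). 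So your closing remark that the Riesz--Fischer property ``is secured'' by the framework is not correct.

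The gap is not merely a citation problem: the implication you need genuinely requires some Fatou-type input. For instance, $\rho(f)=\lVert f\rVert_{\ell_2}+\limsup_{n} n\,f(n)$ satisfies \ref{FQN:H}--\ref{FQN:QSA} with $\kappa=1$ over $\NN$, and for $a=(1/n)_{n=1}^\infty$ one has $\rho(|a-s|)\ge\limsup_n n|a_n|=1$ for every $s\in c_{00}$, so $a\notin\LL_\rho^b$, while $\rho(|a|\chi_{(N,M]})\le\bigl(\sum_{n>N}n^{-2}\bigr)^{1/2}\to 0$, so the truncations are Cauchy and no sequence of blocks as in the statement can have $\rho$-norms bounded away from zero. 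Thus the forward implication needs an additional hypothesis (weak Fatou, or completeness of $\LL_\rho$); the paper's one-line proof elides the same point by treating ``does not converge'' and ``is not Cauchy'' as interchangeable. Once such a hypothesis is granted, your argument is complete; the identification of the Cauchy limit with $a$ can in fact be done directly from coordinatewise convergence (Lemma~\ref{lem2}~\ref{lem2:3}) without the subsequence extraction of Proposition~\ref{prop21}.
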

\begin{proof}
Use that $(a_n)_{n=1}^\infty\in\LL_\rho\setminus\LL^b_\rho$ if and only if the series $\sum_{n=1}^\infty a_n\, \ee_n$ does not converge.
\end{proof}

\begin{proof}[Proof of Proposition~\ref{prop:looSE}]
Assume that $\LL_\rho^b\not=\LL_\rho$. By Lemma~\ref{lem:NS}, there is $(a_n)_{n=1}^\infty$ in $[0,\infty)^\NN$ such that, if
\[
\textstyle
x_k= \sum_{n=1+m_{2k-1}}^{m_{2k}} a_n\, \ee_n, \quad k\in\NN,
\]
then $\inf_k \Vert x_k\Vert_\rho>0$ and $\sup_m \Vert \sum_{k=1}^m x_k\Vert<\infty$. So, $(x_k)_{k=1}^\infty$ is a block basic sequence as desired.
\end{proof}

\begin{corollary}\label{cor:96}
Let $\rho$ be a function quasi-norm over $\NN$. If $\rho$ satisfies a lower $p$-estimate for some $p<\infty$, then $\rho$ is minimal and $L$-convex.
\end{corollary}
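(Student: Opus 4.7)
Both assertions rest on the following immediate consequence of the lower $p$-estimate (with constant $C$): any disjointly supported family $(x_j)_{j=1}^{n}$ in $\LL_\rho$ satisfies
\[
\sum_{j=1}^{n}\rho(x_j)^{p} \le C^{p}\,\rho\Big(\sum_{j=1}^{n}x_j\Big)^{p}.
\]

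For minimality I would argue by contradiction via Proposition~\ref{prop:looSE}: if $\rho$ were not minimal, the proposition would supply a block basic sequence $(x_k)_{k=1}^{\infty}$ in $\LL_\rho$ equivalent to the unit vector system of $\ell_\infty$, whose vectors are then disjointly supported with $\delta:=\inf_{k}\rho(x_k)>0$ and $M:=\sup_{N}\rho(\sum_{k\le N}x_k)<\infty$; the displayed inequality would then force $N\delta^{p}\le(CM)^{p}$ for every $N$, which is impossible.

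For $L$-convexity, Kalton's theorem (\cite{Kalton1984b}*{Theorem~2.2}) reduces the task to showing that $\LL_\rho$ is lattice $q$-convex for some $q>0$. Writing $p_0$ for the Aoki--Rolewicz exponent from Proposition~\ref{prop:AOFQN}, when $p\le p_0$ I would take $q=p_0$ and derive lattice $p_0$-convexity by applying the displayed inequality to the atomic decompositions $f_j=\sum_{k}f_j(k)\ee_k$ together with the upper $p_0$-estimate $\rho(h)^{p_0}\le C_0\sum_k(h(k)\rho(\ee_k))^{p_0}$; after interchanging the summations and using the sequence-space comparison $\lVert\cdot\rVert_{\ell_{p_0}}\le\lVert\cdot\rVert_{\ell_{p}}$ (valid precisely when $p\le p_0$), one obtains the bound
\[
\rho\Big(\Big(\sum_{j}f_j^{p_0}\Big)^{1/p_0}\Big)\le C_0^{1/p_0}C\,\Big(\sum_{j}\rho(f_j)^{p_0}\Big)^{1/p_0}.
\]

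The main obstacle is the complementary regime $p>p_0$, where this atomic argument breaks down because the $\ell_{p_0}$-versus-$\ell_{p}$ comparison is reversed; the natural remedy of convexifying $\rho\mapsto\rho^{(r)}$ does not help either, as a short computation shows that the Aoki--Rolewicz exponent of $\rho^{(r)}$ scales by $r$ in exactly the same way as the transformed lower-estimate exponent $rp$, leaving the condition $p<p_0$ invariant. This regime appears to genuinely require the full strength of Kalton's theorem, which one would hope to invoke by showing that the failure of lattice $q$-convexity for every $q>0$ produces a disjoint sequence violating the lower $p$-estimate, contradicting the hypothesis.
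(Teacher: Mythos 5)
The minimality half of your argument is correct and is exactly the paper's route: Proposition~\ref{prop:looSE} produces a disjointly supported (block) sequence $(x_k)_{k=1}^\infty$ with $\inf_k\rho(|x_k|)=\delta>0$ and uniformly bounded partial sums, and the lower $p$-estimate forces $N\delta^p\le (CM)^p$ for all $N$, a contradiction. That part stands.

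The $L$-convexity half has a genuine gap, which you yourself flag: your strategy is to verify lattice $q$-convexity directly and then invoke the equivalence ``$L$-convex $\Leftrightarrow$ lattice $q$-convex for some $q$,'' but your atomic-decomposition argument only works when $p\le p_0$ (the Aoki--Rolewicz exponent), and you correctly observe that convexification cannot move you into that regime. The case $p>p_0$ is left as a hope rather than a proof, so the corollary is not established. The repair is to abandon the attempt to exhibit an explicit convexity exponent and instead use the \emph{other} characterization of $L$-convexity in \cite{Kalton1984b}*{Theorem 4.1} --- the one the paper actually cites --- namely that a quasi-Banach lattice is $L$-convex if and only if $\ell_\infty^n$ is not uniformly representable on disjoint vectors. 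That condition is killed by precisely the same computation you already used for minimality: disjointly supported $(f_j)_{j=1}^n$ with $\rho(f_j)\ge\delta$ and $\rho(\sum_j f_j)\le M$ satisfy $n\delta^p\le C^pM^p$, so $n$ is bounded. In other words, one single disjoint-vector estimate yields both conclusions, with no case split on $p$ versus $p_0$; your difficulty in the regime $p>p_0$ is an artifact of choosing the $q$-convexity formulation of $L$-convexity rather than the finite-representability one.
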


\begin{proof}
Our assumptions yields that $\ell_\infty$ is not finitely represented in $\LL_\rho$ by means of block basic sequences. Then, result follows from Proposition~\ref{prop:looSE} and \cite{Kalton1984b}*{Theorem 4.1}.
\end{proof}

Notice that function quasi-norms over $\NN$ are closely related to unconditional bases. In fact, if $\rho$ is a function quasi-norm over $\NN$, then the unit vector system $(\ee_n)_{n=1}^\infty$ is an unconditional basis of $\LL_\rho^b$. Reciprocally, if $(\xx_n)_{n=1}^\infty$ is an unconditional basis of a quasi-Banach space $\XX$, then the mapping
\[
\rho\left(( a_n)_{n=1}^\infty\right)=\sup \left\{ \left\Vert \sum_{n=1}^\infty b_n\, \xx_n\right\Vert \colon (b_n)_{n=1}^\infty\in c_{00},\, \forall \, n\in\NN \; |b_n|\le |a_n|\right\}
\]
defines a function quasi-norm over $\NN$, and the linear map given by $\xx_n\mapsto\ee_n$ extends to an isomorphism from $\XX$ onto  $\LL_\rho^b$.

\section{The galb of a quasi-Banach space}\label{sec:galbs}
\noindent
In this section we deal with function quasi-norms associated with galbs of quasi-Banach spaces.

\begin{definition}
A function quasi-norm over $\NN$ is said to be \emph{symmetric} (or \emph{rearrangement invariant}) if $\rho(f)=\rho(g)$ whenever $g=(b_n)_{n=1}^\infty$ is a rearrangement of $f=(a_n)_{n=1}^\infty$, i.e., there is a permutation $\pi$ of $\NN$ such that $b_n=a_{\pi(n)}$ for all $n\in\NN$.
\end{definition}
The symmetry of $\rho$ allows us to safely define $\rho(f)$ for any countable family of non-negative scalars $f=(a_j)_{j\in J}$. In the language of bases, if $\rho$ is a symmetric function-quasi-norm, then the unit vector system is a $1$-symmetric basis of $\LL_\rho^b$.

\begin{definition}
Given a quasi-Banach space $\XX$ and a sequence $f=(a_n)_{n=1}^\infty$ in $ [0,\infty]^\NN$ we define
\[
\lambda_\XX(f) =\sup\left\{\left\Vert \sum_{n=1}^{N} a _{n}x_{n} \right\Vert \colon N\in\NN, \Vert x_n\Vert \le 1\right\}
\]
if $a_n<\infty$ for all $n\in\NN$, and $\lambda_\XX(f)=\infty$ otherwise.
\end{definition}

\begin{proposition}\label{prop:FQG}
Let $\XX$ be a quasi-Banach space. Then $\lambda_\XX$ is a symmetric function quasi-norm with modulus of concavity at most that of $\XX$. Moreover,

\begin{enumerate}[label=(\roman*),leftmargin=*,widest=iv]
\item $\lambda_\XX$ is locally absolutely continuous.
\item $\lambda_\XX$ has the Fatou property.
\item If $\YY$ is a subspace of $\XX$, then $\lambda_\XX$ dominates $\lambda_\YY$.
\item If $\XX$ and $\YY$ are isomorphic, then $\lambda_\XX$ and $\rho_\YY$ are equivalent.
\item $(\lambda_\XX,\lambda_\XX)[1,2]$ dominates $\lambda_\XX$ (regarded as a function quasi-norm over $\NN^2$).
\item If $\XX$ is a $p$-Banach space, $0<p\le 1$, then $\lambda_\XX$ is a function $p$-norm.
\item\label{galb:latticecon} If $\XX$ a $p$-convex quasi-Banach lattice, $0<p\le 1$, then $\lambda_\XX$ is lattice $p$-convex.
\end{enumerate}
\end{proposition}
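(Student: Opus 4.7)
\emph{Plan.} All properties follow from direct manipulation of the supremum defining $\lambda_\XX$, exploiting in turn the algebraic, topological, and lattice structure of $\XX$. To verify the axioms: \ref{FQN:H} is immediate, and symmetry is clear since the defining supremum is invariant under relabellings of the index set. For monotonicity \ref{FQN:M}, if $0\le a_n\le b_n$, rescale each admissible $x_n$ to $(a_n/b_n)x_n$ (leaving aside the $n$ with $b_n=0$) to convert a witness for $\lambda_\XX((a_n))$ into one for $\lambda_\XX((b_n))$. Axiom \ref{FQN:SF} holds because $\lambda_\XX(\chi_E)$ is a finite sum of unit vectors, bounded by iterating the modulus of concavity $\kappa$ of $\XX$ a finite number of times. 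Testing with all $x_n$ equal to a single unit vector gives the bound $\lambda_\XX(\chi_A)\ge|A|$, so \ref{FQN:CM} is satisfied with $\delta=\varepsilon$. A single application of the quasi-triangle inequality inside $\XX$,
\[
\Bigl\lVert\sum_n(a_n+b_n)x_n\Bigr\rVert_\XX\le\kappa\Bigl(\Bigl\lVert\sum_n a_nx_n\Bigr\rVert_\XX+\Bigl\lVert\sum_n b_nx_n\Bigr\rVert_\XX\Bigr),
\]
yields \ref{FQN:QSA} and shows the modulus of concavity of $\lambda_\XX$ is at most $\kappa$.

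For the listed items, (ii) is immediate since for fixed $N$ and unit vectors $x_1,\dots,x_N$, $\sum_{k=1}^N f_n(k)x_k\to\sum_{k=1}^N f(k)x_k$ in $\XX$, whose norm is at most $\lim_n\lambda_\XX(f_n)$; taking suprema yields Fatou with constant $1$. For (i), after invoking Aoki-Rolewicz to equip $\XX$ with an equivalent $p$-norm (under which $\lambda_\XX$ becomes a function $p$-norm by (vi), hence continuous), local absolute continuity reduces to showing $\chi_E$ is dominating for finite $E\subset\NN$, which follows from the uniform bound $\lambda_\XX(f)\le C_{|E|,\XX}\max_{k\in E} f(k)$ for $f\le\chi_E$. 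Items (iii) and (iv) are transport arguments: in (iii), admissible $(y_n)\subset\YY$ for $\lambda_\YY$ form a subfamily of admissible $(x_n)\subset\XX$ for $\lambda_\XX$, yielding $\lambda_\YY\le\lambda_\XX$; in (iv), an isomorphism $T\colon\XX\to\YY$ transports admissible families at the cost of $\lVert T\rVert\lVert T^{-1}\rVert$. For (v), given a finite $F\subset\NN^2$ and unit vectors $(x_{n,m})$, group by the first index: $y_n:=\sum_{m:(n,m)\in F}a_{n,m}x_{n,m}$ satisfies $\lVert y_n\rVert\le\phi(n):=\lambda_\XX((a_{n,m})_m)$, and writing $y_n=\phi(n)z_n$ with $\lVert z_n\rVert\le 1$ yields $\lVert\sum_F a_{n,m}x_{n,m}\rVert=\lVert\sum_n\phi(n)z_n\rVert\le\lambda_\XX((\phi(n))_n)$. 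Item (vi) is direct from $\lVert u+v\rVert^p\le\lVert u\rVert^p+\lVert v\rVert^p$ in a $p$-Banach $\XX$, applied to $u=\sum a_nx_n$ and $v=\sum b_nx_n$, followed by taking the supremum.

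The main obstacle is (vii). Let $\XX$ be a $p$-convex quasi-Banach lattice with lattice $p$-convexity constant $K$. Using $\lvert\sum a_nx_n\rvert\le\sum\lvert a_n\rvert\lvert x_n\rvert$, reduce to $a_{n,i},x_n\ge 0$; set $b_n:=(\sum_i a_{n,i}^p)^{1/p}$ and $u_i:=\sum_n a_{n,i}x_n\ge 0$. The crux is the pointwise reverse Minkowski inequality in the lattice,
\[
\sum_n b_nx_n=\sum_n\Bigl(\sum_i(a_{n,i}x_n)^p\Bigr)^{1/p}\le\Bigl(\sum_i\bigl(\sum_n a_{n,i}x_n\bigr)^p\Bigr)^{1/p}=\Bigl(\sum_i u_i^p\Bigr)^{1/p},
\]
obtained by restricting to the closed sublattice generated by $\{x_n\}\cup\{u_i\}$, invoking Krivine's functional calculus to realize it as a function lattice, and applying the scalar reverse Minkowski inequality $\sum_n(\sum_i c_{n,i}^p)^{1/p}\le(\sum_i(\sum_n c_{n,i})^p)^{1/p}$ valid for $0<p\le 1$ and $c_{n,i}\ge 0$. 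Lattice $p$-convexity of $\XX$ then gives
\[
\Bigl\lVert\sum_n b_nx_n\Bigr\rVert_\XX\le\Bigl\lVert\bigl(\sum_i u_i^p\bigr)^{1/p}\Bigr\rVert_\XX\le K\Bigl(\sum_i\lVert u_i\rVert_\XX^p\Bigr)^{1/p}\le K\Bigl(\sum_i\lambda_\XX(f_i)^p\Bigr)^{1/p},
\]
and taking the supremum over admissible $(x_n)$ yields lattice $p$-convexity of $\lambda_\XX$ with constant $K$. The delicate step is the justification of reverse Minkowski in the abstract quasi-Banach lattice via Krivine-type functional calculus; everything else in the proof is bookkeeping.
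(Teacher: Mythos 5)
Your treatment of item (vii) is essentially the paper's own proof, which is the only item the paper actually proves (the others are declared to be reformulations of results of Turpin and left as an exercise). The paper derives the key estimate from the observation that the lattice $(x_n)_n\mapsto\lVert\sum_n|x_n|\rVert$ on $\XX^\NN$ is lattice $p$-convex, obtained by composing the scalar reverse Minkowski inequality (the lattice $p$-convexity of $\ell_1$ with constant $1$) with the lattice $p$-convexity of $\XX$; you make the same two ingredients explicit, using Krivine's functional calculus to give meaning to $(\sum_i u_i^p)^{1/p}$. That invocation is legitimate here because a $p$-convex quasi-Banach lattice is $L$-convex, so the calculus is available; both routes give the same constant $K$. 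Your verifications of the axioms and of items (iii), (iv), (v), (vi) are correct. Two of the remaining items, however, have gaps.

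In (ii) you pass from $\sum_k f_n(k)x_k\to\sum_k f(k)x_k$ to $\lVert\sum_k f(k)x_k\rVert\le\lim_n\lambda_\XX(f_n)$. This uses lower semicontinuity of the quasi-norm of $\XX$ along convergent sequences, which fails for general quasi-norms; the available estimate $\lVert x\rVert\le\kappa\liminf_n\lVert x_n\rVert$ only yields the rough Fatou property with constant $\kappa$. The statement with constant $1$ is still true, but by a rescaling device: for $\delta>0$ and $n$ large one has $f_n(k)\ge(1-\delta)f(k)$ for the finitely many relevant $k$, so the vectors $y_k=(1-\delta)f(k)f_n(k)^{-1}x_k$ lie in the unit ball and $\sum_k f_n(k)y_k=(1-\delta)\sum_k f(k)x_k$, whence $\lambda_\XX(f_n)\ge(1-\delta)\bigl\lVert\sum_k f(k)x_k\bigr\rVert$.

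More seriously, your proof of (i) replaces $\lVert\cdot\rVert$ by an equivalent $p$-norm and establishes local absolute continuity of the resulting function quasi-norm, which is only \emph{equivalent} to $\lambda_\XX$; but the paper states explicitly that absolute continuity is not preserved under equivalence of function quasi-norms, so this does not give (i) for $\lambda_\XX$ itself. The obstruction is real: on $\XX=\RR^2$ with the complete, discontinuous quasi-norm $\lVert(a,b)\rVert=|a|+|b|$ if $ab=0$ and $2(|a|+|b|)$ otherwise, the sequence $f_n=(1,1/n,0,\dots)$ decreases to $g=(1,0,0,\dots)$ below $\chi_{\{1,2\}}$, yet $\lambda_\XX(f_n)\ge\lVert(1,0)+\tfrac1n(0,1)\rVert=2(1+\tfrac1n)$ while $\lambda_\XX(g)=1$; so $\chi_{\{1,2\}}$ is dominating but not absolutely continuous. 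What your argument does prove is that $\chi_E$ is dominating for every finite $E$ (this needs only homogeneity and monotonicity) and that (i) holds whenever the quasi-norm of $\XX$ is continuous, e.g.\ a $p$-norm. As stated for an arbitrary quasi-norm, (i) cannot be recovered by renorming, and the example shows it can genuinely fail.
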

 
\begin{proof}
We will prove \ref{galb:latticecon}, and we will leave the other assertions, which are reformulations of results from \cite{Turpin1976}, as an exercise for the reader.
Notice that  $\ell_1$ is a $p$-convex lattice, that is, we have
\[
\sum_{n=1}^\infty \left( \sum_{j=1}^J |a_{n,j}|^p\right)^{1/p} \le\left( \sum_{j\in J}\Big( \sum_{n=1}^\infty |a_{n,j}|\Big)^{p}\right)^{1/p}, \quad a_{n,j}\in\FF.
\]
Hence, the lattice defined by the quasi-norm
\[
g=(x_n)_{n=1}^\infty \mapsto \left\Vert \sum_{n=1}^\infty \lvert x_n\rvert \right\Vert, \quad g\in \XX^\NN,
\]
is $p$-convex. Let $C$ denote its $p$-convexity constant. Let $f_j=(a_{j,n})_{n=1}^\infty\in[0,\infty)^\NN$, $1\le j \le J$. Given $(x_n)_{n=1}^N\in B_\XX^N$ we have
\begin{align*}
\left\Vert \sum_{n=1}^N \left( \sum_{j=1}^J a_{j,n}^p\right)^{1/p} x_n \right\Vert
&\le \left\Vert \sum_{n=1}^N \left( \sum_{j=1}^J (a_{j,n} \, \lvert x_n\rvert)^p\right)^{1/p} \right\Vert\\
&\le C \left( \sum_{j=1}^J \left\Vert \sum_{n=1}^N a_{j,n} \, \lvert x_n\rvert\right\Vert^p\right)^{1/p}\\
&\le C\left( \sum_{j=1}^J \lambda_\XX^p(f_j)\right)^{1/p}.
\end{align*}
Consequently, $\lambda_\XX(\sum_{j=1}^J |f_j|^p)^{1/p}) \le C ( \sum_{j=1}^J \lambda_\XX^p(f_j))^{1/p}$.
\end{proof}

\begin{definition}\label{def:galb}
Let $\XX$ be a quasi-Banach space. We denote $\GG(\XX)=\LL_{\lambda_\XX}$, and we say that $\GG(\XX)$ is the \emph{galb} of $\XX$. The positive cone of $\GG(\XX)$ will be denoted by $\GG^+(\XX)$, and $\GG_b(\XX)$ stands for the closure of $c_{00}$ in $\GG(\XX)$.
\end{definition}

Roughly speaking, it could be said that the galb of a space is a measure of its convexity. The notion of galb was introduced and developed by Turpin, within the more general setting of ``espaces vectoriels \`a convergence'', in a series of papers \cites{Turpin1973a, Turpin1973b} and a monograph \cite{Turpin1976}. In this section we restrict ourselves to galbs of locally bounded spaces and touch only a few aspects of the theory and summarize without proofs the properties that are more relevant to our work.

\begin{proposition}[see \cite{Turpin1976}]
Let $\XX$ be a quasi-Banach space. Then $\GG(\XX)\subseteq\ell_1$, and $\GG(\XX)=\ell_1$ if and only if $\XX$ is locally convex.
\end{proposition}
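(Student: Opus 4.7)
My plan is to prove the two parts of the statement separately: first the unconditional inclusion $\GG(\XX)\subseteq\ell_1$, and then the characterization of $\ell_1\subseteq \GG(\XX)$ as local convexity. Throughout I take $\XX\neq\{0\}$, since otherwise $\lambda_\XX\equiv 0$ and the statement is vacuous.

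For $\GG(\XX)\subseteq\ell_1$, I would unwind the definition of $\lambda_\XX$ with a single test vector. Fix $x\in\XX$ with $\|x\|=1$, let $(a_n)\in\GG(\XX)$, and for each $N$ pick unimodular scalars $\epsilon_n\in\FF$ with $\epsilon_n a_n=|a_n|$ and set $x_n=\epsilon_n x$. Then $\|x_n\|\le 1$ and $\bigl\|\sum_{n=1}^N a_n x_n\bigr\|=\sum_{n=1}^N|a_n|$. Taking the supremum over $N$ in the definition of $\lambda_\XX$ gives $\sum_{n=1}^{\infty}|a_n|\le \lambda_\XX((a_n))<\infty$, so $(a_n)\in\ell_1$.

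For the easy direction of the equivalence, if $\XX$ is locally convex then $\XX$ admits an equivalent norm and the triangle inequality immediately yields $\lambda_\XX((a_n))\le \|(a_n)\|_{\ell_1}$, whence $\ell_1\subseteq\GG(\XX)$, and combined with the previous paragraph $\GG(\XX)=\ell_1$. For the converse, I argue by contraposition: assuming $\XX$ is not locally convex, I construct a sequence in $\ell_1\setminus\GG(\XX)$. The failure of local convexity is equivalent to the failure of the $1$-convexity inequality \eqref{eq:pconvex}; that is, for every $C>0$ there exists a finite tuple $(u_j)\subseteq\XX$ with $\|\sum_j u_j\|>C\sum_j\|u_j\|$. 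For each $m\in\NN$, apply this with $C=4^m$ and rescale to produce $(y_{j,m})_{j=1}^{k_m}\subseteq \XX$ with $\sum_j\|y_{j,m}\|=2^{-m}$ and $\bigl\|\sum_j y_{j,m}\bigr\|>2^m$. Concatenate the blocks into a single sequence $(z_n)_{n=1}^\infty$, and set $a_n=\|z_n\|$ and $w_n=z_n/a_n$ on the support. Then $(a_n)\in\ell_1$ (its sum equals $\sum_m 2^{-m}=1$) and $\|w_n\|\le 1$, but the partial sums of $\sum_n a_n w_n$ include each block $\sum_j y_{j,m}$ of norm exceeding $2^m$, so the series cannot converge in $\XX$; in particular $\lambda_\XX((a_n))=\infty$ and $(a_n)\notin\GG(\XX)$.

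The only point requiring care is the characterization of local convexity via the $1$-convexity inequality \eqref{eq:pconvex} used at the start of the contrapositive; this is classical material for quasi-Banach spaces, implicit in the discussion surrounding \eqref{eq:pconvex} in the Terminology section. A cleaner but more abstract alternative would invoke the closed graph theorem: Proposition~\ref{prop:FQG} together with the Riesz-Fischer results of Section~\ref{sect:Fatou} make $\GG(\XX)$ into a quasi-Banach space, so a set-theoretic inclusion $\ell_1\subseteq\GG(\XX)$ is automatically a bounded embedding (both topologies dominate coordinatewise convergence), yielding a constant $C$ with $\lambda_\XX((a_n))\le C\|(a_n)\|_{\ell_1}$. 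Applying this to $a_n=\|y_n\|$ and $x_n=y_n/a_n$ for any finite tuple $(y_n)\subseteq \XX$ then recovers the $1$-convexity of $\XX$ directly.
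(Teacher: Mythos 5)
The paper does not actually prove this proposition: it is quoted from Turpin's monograph (``summarize without proofs the properties that are more relevant to our work''), so there is no in-text argument to compare against. Your self-contained proof is correct and follows the natural elementary route: testing $\lambda_\XX$ on multiples of a single unit vector for $\GG(\XX)\subseteq\ell_1$, the triangle inequality of an equivalent norm for the easy implication, and, for the converse, the classical equivalence between local convexity of a quasi-normed space and the $1$-convexity inequality \eqref{eq:pconvex}, followed by a rescaled block construction. Two points should be tightened. First, in the locally convex case the estimate should read $\lambda_\XX((a_n)_{n})\le C\Vert (a_n)_{n}\Vert_{\ell_1}$ with $C$ the constant of equivalence between the quasi-norm and the norm, since $\lambda_\XX$ is defined through the original quasi-norm; this does not affect the conclusion. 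Second, and more substantively, the inference ``the series $\sum_n a_n w_n$ cannot converge, in particular $\lambda_\XX((a_n)_{n})=\infty$'' is not a valid implication in general: finiteness of $\lambda_\XX((a_n)_{n})$ does not force convergence of $\sum_n a_n x_n$ for every bounded $(x_n)_{n}$ --- that is precisely the distinction between $\GG(\XX)$ and $\GG_b(\XX)$ drawn in Lemma~\ref{lem:KPRG}. Your construction nevertheless yields the conclusion directly: taking $x_n=w_n$ on the $m$-th block and $x_n=0$ elsewhere exhibits $\bigl\Vert\sum_j y_{j,m}\bigr\Vert>2^m$ as one of the quantities over which the supremum defining $\lambda_\XX((a_n)_{n})$ is taken, so $\lambda_\XX((a_n)_{n})\ge 2^m$ for every $m$ and hence $(a_n)_{n}\notin\GG(\XX)$. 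State it that way (unboundedness of the tested partial sums) rather than through non-convergence of a single series.
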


\begin{proposition}[see \cite{Turpin1976}]\label{prop:GalbGalb}
Let $\XX$ be a quasi-Banach space. Then $\GG(\GG(\XX))=\GG(\XX)$.
\end{proposition}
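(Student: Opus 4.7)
My plan is to prove both inclusions by showing that the function quasi-norms $\lambda_\XX$ and $\lambda_{\GG(\XX)}$ are equivalent on scalar sequences; since $\GG(\YY)=\{f:\lambda_\YY(f)<\infty\}$, this gives $\GG(\GG(\XX))=\GG(\XX)$.

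For the inclusion $\GG(\GG(\XX))\subseteq\GG(\XX)$, I would test the definition of $\lambda_{\GG(\XX)}$ against the unit vector system $(\ee_n)$: since $\lVert \ee_n\rVert_{\GG(\XX)}=\lambda_\XX(\ee_n)=1$, each finite truncation $\sum_{n=1}^N a_n\ee_n$ is an admissible test element, and its $\GG(\XX)$-quasi-norm is exactly $\lambda_\XX((a_1,\ldots,a_N,0,\ldots))$. Taking the supremum over $N$ and recalling that $\lambda_\XX$ itself is the supremum of its own initial truncations gives $\lambda_\XX((a_n))\le \lambda_{\GG(\XX)}((a_n))$.

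The reverse estimate $\lambda_{\GG(\XX)}\le C\,\lambda_\XX$ is the substantive direction and I would derive it from the Minkowski-type inequality in Proposition~\ref{prop:FQG}(v). Fix non-negative scalars $(a_n)$ and a family $y^{(n)}=(y^{(n)}_k)_k$ in the unit ball of $\GG(\XX)$, so $\lambda_\XX(|y^{(n)}|)\le 1$. Writing $z_n=|y^{(n)}|$, monotonicity of $\lambda_\XX$ reduces the estimation of $\bigl\lVert\sum_{n=1}^N a_n y^{(n)}\bigr\rVert_{\GG(\XX)}$ to a bound on $\lambda_\XX\bigl((\sum_n a_n z_n^{(k)})_k\bigr)$. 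Testing against any $(x_k)$ in the unit ball of $\XX$ and extending it to the constant double-indexed family $x_{n,k}:=x_k$ shows that this sum is, by symmetry of $\lambda_\XX$, controlled by $\lambda_\XX(h)$ where $h:=(a_n z_n^{(k)})_{n,k}$. Proposition~\ref{prop:FQG}(v) then provides
\[
\lambda_\XX(h)\le C\,\lambda_\XX\bigl(\lambda_\XX[2,h]\bigr)=C\,\lambda_\XX\bigl((a_n\lambda_\XX(z_n))_n\bigr)\le C\,\lambda_\XX((a_n)),
\]
and taking suprema over the admissible $(x_k)$ and $y^{(n)}$ delivers $\lambda_{\GG(\XX)}((a_n))\le C\,\lambda_\XX((a_n))$.

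The main obstacle, I expect, is keeping the bookkeeping of single- versus double-index roles of $\lambda_\XX$ straight: both the flattening of $\sum_{n,k}a_n z_n^{(k)}x_k$ into a quantity governed by $\lambda_\XX$ on $\NN^2$ and the iterated-quasi-norm bound of item (v) hinge on the symmetry of $\lambda_\XX$ and on the legitimacy of the constant lift $x_{n,k}=x_k$. Once these identifications are made, the constants propagate cleanly.
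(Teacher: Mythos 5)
Your argument is correct. The paper itself offers no proof of this proposition --- it is cited to Turpin's monograph --- so there is nothing to compare line by line; judged on its own, your two-sided estimate $\lambda_\XX \le \lambda_{\GG(\XX)} \le C\,\lambda_\XX$ on non-negative sequences does establish the set equality (and more). The easy inclusion via the unit vectors $\ee_n$, each of which has $\lambda_\XX(\ee_n)=1$, is clean, and the exhaustion of $\lambda_\XX(f)$ by its initial truncations is immediate from the definition. The substantive direction is correctly reduced to Proposition~\ref{prop:FQG}(v): the constant lift $x_{n,k}:=x_k$ is a legitimate test family for $\lambda_\XX$ over $\NN^2$ (choosing an enumeration of $\NN^2$ whose initial segment is the finite index set in play, which symmetry permits), and the identification $\lambda_\XX[2,h](n)=a_n\lambda_\XX(z_n)\le a_n$ closes the loop. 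One caveat worth making explicit: item (v) is itself only stated in the paper (also deferred to Turpin), and it carries essentially all the weight of the hard inclusion, so you should satisfy yourself that it is not secretly equivalent to what you are proving. It is not circular, and in fact it holds with $C=1$: given $h=(a_{n,k})$ and unit vectors $(x_{n,k})$ indexed by a finite set, regroup
\[
\sum_{n,k} a_{n,k}\,x_{n,k}=\sum_n b_n u_n,\qquad b_n=\lambda_\XX\bigl((a_{n,k})_k\bigr),\quad u_n=b_n^{-1}\sum_k a_{n,k}x_{n,k},
\]
where $\Vert u_n\Vert\le 1$ by the definition of $\lambda_\XX$ (terms with $b_n=0$ vanish, since $\lambda_\XX(g)=0$ forces $g=0$), and then $\Vert\sum_n b_n u_n\Vert\le\lambda_\XX((b_n)_n)$. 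With that observation supplied, your proof is complete and self-contained.
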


\begin{proposition}[see \cite{Turpin1976}]
Let $\XX$ be a quasi-Banach space and $0<p\le 1$. Then $\XX$ is $p$-convex if and only if $\ell_p\subseteq \GG(\XX)$.
\end{proposition}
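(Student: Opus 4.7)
The plan is to prove both directions of the equivalence by unpacking the definition of $\lambda_\XX$ and then invoking the closed graph theorem for the harder direction.

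For the forward implication, assume $\XX$ is $p$-convex with constant $C$ as in \eqref{eq:pconvex}. Given $f=(a_n)_{n=1}^\infty\in\ell_p$ and any sequence $(x_n)$ in the closed unit ball of $\XX$, I would estimate
\[
\Bigl\lVert \sum_{n=1}^N a_n x_n \Bigr\rVert^p \le C \sum_{n=1}^N \lVert a_n x_n\rVert^p \le C \sum_{n=1}^\infty \lvert a_n\rvert^p,
\]
yielding $\lambda_\XX(f)\le C^{1/p}\lVert f\rVert_p<\infty$, so $f\in\GG(\XX)$.

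For the reverse implication, the strategy is to show that the set-theoretic inclusion $\iota\colon\ell_p\hookrightarrow\GG(\XX)$ is a continuous linear map. First I would verify that $\GG(\XX)=\LL_{\lambda_\XX}$ is a quasi-Banach space: by Proposition~\ref{prop:FQG}, $\lambda_\XX$ has the Fatou property, so it has the weak Fatou property, and by the Aoki--Rolewicz theorem for function quasi-norms (Proposition~\ref{prop:AOFQN}) it is $q$-convex for some $q$; combining this with the weak Fatou property and invoking the characterization of completeness via the Riesz--Fischer $q$-property (the proposition preceding Section~\ref{subsect:DC}), $\GG(\XX)$ is a quasi-Banach space. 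Next, I would check that $\iota$ has closed graph: since for any $k\in\NN$ one has $\lambda_\XX(f)\ge \lvert a_k\rvert$ (take a unit vector of $\XX$ in the $k$-th slot and zeros elsewhere, assuming $\XX\neq\{0\}$), convergence in $\GG(\XX)$ implies coordinatewise convergence; the same trivially holds for $\ell_p$. Hence if $f^{(k)}\to f$ in $\ell_p$ and $\iota(f^{(k)})\to g$ in $\GG(\XX)$, both limits must agree coordinatewise, giving $f=g$. The closed graph theorem for $F$-spaces then yields a constant $C$ with $\lambda_\XX(f)\le C\lVert f\rVert_p$ for all $f\in\ell_p$.

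From this boundedness I would conclude $p$-convexity: given arbitrary nonzero $x_1,\dots,x_n\in\XX$, set $a_j=\lVert x_j\rVert$ and $y_j=x_j/\lVert x_j\rVert$, so that $\lVert y_j\rVert=1$ and
\[
\Bigl\lVert \sum_{j=1}^n x_j \Bigr\rVert = \Bigl\lVert \sum_{j=1}^n a_j y_j\Bigr\rVert \le \lambda_\XX\bigl((a_j)_{j=1}^n\bigr) \le C\Bigl(\sum_{j=1}^n \lVert x_j\rVert^p\Bigr)^{1/p};
\]
raising to the $p$-th power produces the required estimate \eqref{eq:pconvex} with constant $C^p$.

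The main point that needs careful handling is justifying that $\GG(\XX)$ is complete, since the closed graph theorem is the engine of the argument; fortunately this is precisely where the Fatou property of $\lambda_\XX$ established in Proposition~\ref{prop:FQG} pays off. Everything else is bookkeeping around the definition of $\lambda_\XX$ and a standard normalization trick.
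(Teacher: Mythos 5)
Your proof is correct. The paper itself offers no argument for this proposition---it is quoted from Turpin's monograph---so there is no ``paper proof'' to match; what you have written is a complete, self-contained justification using only tools already established in the paper. Both directions check out: the forward implication is the immediate estimate from \eqref{eq:pconvex}, and in the reverse direction your chain of citations for the completeness of $\GG(\XX)=\LL_{\lambda_\XX}$ (Fatou property from Proposition~\ref{prop:FQG}, $q$-convexity from Proposition~\ref{prop:AOFQN}, then the Riesz--Fischer characterization of completeness) is exactly what is needed to legitimize the closed graph theorem, and the coordinate functionals $f\mapsto a_k$ are indeed continuous on both spaces since $\lambda_\XX(f)\ge|a_k|$ when $\XX\neq\{0\}$. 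The only stylistic remark worth making is that the closed graph theorem can be avoided entirely by arguing contrapositively: if \eqref{eq:pconvex} fails for every constant, then for each $m$ one finds vectors with $\sum_j\lVert x_{m,j}\rVert^p=2^{-m}$ yet $\lVert\sum_j x_{m,j}\rVert^p>2^m$, and concatenating the normalized blocks produces a sequence in $\ell_p$ with $\lambda_\XX=\infty$. That route is more elementary (no completeness of $\GG(\XX)$ needed) and is closer in spirit to Turpin's original treatment, but it yields the same conclusion; your version has the advantage of reusing the paper's machinery and of making the quantitative domination $\lambda_\XX\le C\,\lVert\cdot\rVert_p$ explicit, which is the form in which the paper actually uses $p$-convexity elsewhere (e.g.\ in the definition of ``$\lambda$ galbs $\XX$'').
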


\begin{proposition}[see \cite{Turpin1973a}]\label{prop:bilinear}
Let $\XX$ be a quasi-Banach space. Then the mapping
\[
B\colon \GG(\XX) \times c_0(\XX) \to \XX, \quad \left( (a_n)_{n=1}^\infty, (x_n)_{n=1}^\infty\right) \mapsto \sum_{n=1}^\infty a_n \, x_n
\]
is well-defined, and defines a bounded bilinear map.
\end{proposition}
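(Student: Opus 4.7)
The plan is to extract both well-definedness and the norm estimate from a single rescaling trick applied to the very definition of $\lambda_\XX$. I will first show that the partial sums $S_N := \sum_{n=1}^N a_n x_n$ form a Cauchy sequence in $\XX$ (hence converge, by completeness), and second that they admit a uniform bound proportional to $\lambda_\XX((a_n))\,\lVert (x_n)\rVert_{c_0(\XX)}$. Bilinearity of the resulting map is then automatic, since each $S_N$ is bilinear in $((a_n),(x_n))$ and taking limits preserves linearity in each argument.

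The key step is the following rescaling. Given $1\le N_1\le N_2$, set $\varepsilon=\sup_{n\ge N_1}\lVert x_n\rVert$; assuming $\varepsilon>0$ (else the corresponding tail vanishes), define $y_n = \varepsilon^{-1}x_n$ for $N_1\le n\le N_2$ and $y_n=0$ otherwise. Then $\lVert y_n\rVert\le 1$ for every $n$, so by the definition of $\lambda_\XX$,
\[
\left\lVert \sum_{n=N_1}^{N_2} a_n x_n\right\rVert \;=\; \varepsilon\left\lVert \sum_{n=1}^{N_2} a_n y_n\right\rVert \;\le\; \varepsilon\,\lambda_\XX((a_n)).
\]
Since $(x_n)\in c_0(\XX)$ forces $\varepsilon=\varepsilon(N_1)\to 0$ as $N_1\to\infty$, the sequence $(S_N)$ is Cauchy in the quasi-Banach space $\XX$, and hence convergent. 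This proves that $B$ is well-defined. Specializing instead to $N_1=1$ yields $\lVert S_{N_2}\rVert\le \lambda_\XX((a_n))\,\lVert (x_n)\rVert_{c_0(\XX)}$ for every $N_2\in\NN$.

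Finally, to pass from this uniform partial-sum bound to a bound on $B((a_n),(x_n))$, I apply the quasi-triangle inequality with modulus of concavity $\kappa$ of $\XX$: for every $N$,
\[
\lVert B((a_n),(x_n))\rVert \le \kappa\left(\lVert B((a_n),(x_n))-S_N\rVert + \lVert S_N\rVert\right),
\]
and letting $N\to\infty$ gives $\lVert B((a_n),(x_n))\rVert \le \kappa\,\lambda_\XX((a_n))\,\lVert (x_n)\rVert_{c_0(\XX)}$, proving that $B$ is bounded. The only genuine (and quite minor) technical point is the $\kappa$ factor in this last step: it is the one place where we must remember that the quasi-norm is not automatically continuous. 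Everything else is a direct reading of the definition of $\lambda_\XX$.
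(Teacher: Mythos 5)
Your proof is correct, and it is worth noting that it is genuinely more elementary than the route the paper gestures at. The paper states this proposition without proof, attributing it to Turpin, and the proof of Corollary~\ref{cor:99} reveals that the intended argument runs through the Open Mapping Theorem (i.e., a Baire-category step to upgrade separate continuity, or pointwise convergence, to a uniform bound). You bypass that entirely by exploiting the fact that in this paper $\GG(\XX)$ is \emph{defined} as the K\"othe space $\LL_{\lambda_\XX}$, so membership of $(a_n)_{n=1}^\infty$ already carries the quantitative information $\sup_N \lVert \sum_{n=1}^N a_n y_n\rVert\le \lambda_\XX((\lvert a_n\rvert)_{n=1}^\infty)$ over all $(y_n)$ in the unit ball; your rescaling $y_n=\varepsilon^{-1}x_n$ on the block $N_1\le n\le N_2$ then extracts both the Cauchy property of the partial sums (since $\varepsilon(N_1)\to 0$ for $(x_n)\in c_0(\XX)$) and the uniform bound $\lVert S_N\rVert\le \lambda_\XX((\lvert a_n\rvert)_{n=1}^\infty)\,\lVert (x_n)\rVert_\infty$, and your final $\kappa$-step correctly accounts for the possible discontinuity of the quasi-norm. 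What the category-theoretic route buys instead is the equivalence with the qualitative definition of the galb given in the Introduction (convergence of $\sum_n a_n x_n$ for every null sequence), where no a priori uniform bound is available; your argument gives the easy direction of that equivalence but would not, by itself, recover the converse. Two cosmetic points: since $\lambda_\XX$ is defined on non-negative sequences, you should absorb the signs or phases of the $a_n$ into the $y_n$ (which remain in the unit ball), so that the bound reads in terms of $\lambda_\XX((\lvert a_n\rvert)_{n=1}^\infty)$, which is the quasi-norm of $(a_n)_{n=1}^\infty$ in $\GG(\XX)$; and joint boundedness of a bilinear map is what makes the phrase ``bounded bilinear map'' precise here, which your displayed estimate $\lVert B((a_n),(x_n))\rVert\le\kappa\,\lambda_\XX((\lvert a_n\rvert)_{n=1}^\infty)\,\lVert (x_n)\rVert_{c_0(\XX)}$ indeed delivers.
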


It is natural to wonder whether the map $B$ defined as in Proposition~\ref{prop:bilinear} can be extended to a continuous bilinear map defined on $\GG(\XX) \times \ell_\infty(\XX)$. In fact, the authors of \cite{KPR1984}, perhaps taking for granted that the answer to this question is positive, defined a sequence $(a_n)_{n=1}^\infty$ to be in the galb of $\XX$ if $\sum_{n=1}^\infty a_n\, x_n$ converges for every bounded sequence $(x_n)_{n=1}^\infty$. If we come to think of it, we obtain the following.

\begin{lemma}\label{lem:KPRG}
Let $\XX$ be a quasi-Banach space and let $f=(a_n)_{n=1}^\infty\in\FF^\NN$. Then, $f\in \GG_b(\XX)$ if and only if $\sum_{n=1}^\infty a_n\, x_n$ converges for every bounded sequence $(x_n)_{n=1}^\infty$ in $\XX$.
\end{lemma}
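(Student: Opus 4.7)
The plan is to prove the equivalence in two stages. The forward implication is a direct Cauchy-criterion argument, whereas the converse (handled by contrapositive) requires a block-and-witness construction that rests on Lemma~\ref{lem:NS} together with the Fatou property of $\lambda_\XX$ established in Proposition~\ref{prop:FQG}.

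For the forward direction, assume $f\in\GG_b(\XX)$ and let $(x_n)\subset\XX$ be bounded with $M:=\sup_n\lVert x_n\rVert$. Given $\varepsilon>0$, pick $g\in c_{00}$ supported in $\{1,\dots,N_0\}$ with $\lambda_\XX(f-g)<\varepsilon/M$. For $N'>N\ge N_0$, the finite sequence $(|a_n|)_{n=N+1}^{N'}$, extended by zero outside $\{N+1,\dots,N'\}$, is pointwise dominated by $|f-g|$ (since $g$ vanishes past $N_0$), so by monotonicity of $\lambda_\XX$ and the sign-absorption $a_nx_n=|a_n|(\operatorname{sgn}(a_n)x_n)$, the very definition of $\lambda_\XX$ yields
\[
\Big\lVert\sum_{n=N+1}^{N'}a_n x_n\Big\rVert\le M\,\lambda_\XX\bigl((|a_n|)_{n=N+1}^{N'}\bigr)\le M\,\lambda_\XX(f-g)<\varepsilon.
\]
Hence the partial sums of $\sum a_n x_n$ are Cauchy, and the series converges by completeness of $\XX$.

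For the converse I argue contrapositively: assuming $f\notin\GG_b(\XX)$, I will construct pairwise disjoint intervals $I_k\subset\NN$ and $c>0$ with $\lambda_\XX\bigl((|a_n|)_{n\in I_k}\bigr)\ge c$ for every $k$. When $f\in\GG(\XX)\setminus\GG_b(\XX)$, Lemma~\ref{lem:NS} applied to $\rho=\lambda_\XX$ produces such $I_k$ at once. When $\lambda_\XX(f)=\infty$, I build $I_k=\{N_{k-1}+1,\dots,N_k\}$ inductively from $N_0=0$: provided $\lambda_\XX\bigl((|a_n|)_{n>N_{k-1}}\bigr)=\infty$, the Fatou property of $\lambda_\XX$ supplies some $N_k>N_{k-1}$ with $\lambda_\XX\bigl((|a_n|)_{n\in I_k}\bigr)\ge 1$, and then quasi-subadditivity of $\lambda_\XX$ together with $\lambda_\XX\bigl((|a_n|)_{n\le N_k}\bigr)<\infty$ (the latter being the quasi-norm of a finitely supported sequence) forces $\lambda_\XX\bigl((|a_n|)_{n>N_k}\bigr)=\infty$, so the induction continues.

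With the $I_k$ in hand, the definition of $\lambda_\XX$ lets me pick, for each $k$, unit vectors $(x_n^{(k)})_{n\in I_k}\subset\XX$ with $\lVert\sum_{n\in I_k}|a_n|x_n^{(k)}\rVert\ge c/2$. Setting $y_n:=(\overline{a_n}/|a_n|)x_n^{(k)}$ for $n\in I_k$ with $a_n\ne 0$, and $y_n:=0$ otherwise, produces a sequence with $\lVert y_n\rVert\le 1$ for which $\sum_{n\in I_k}a_n y_n=\sum_{n\in I_k}|a_n|x_n^{(k)}$ has norm $\ge c/2$ for every $k$. Hence the partial sums of $\sum a_n y_n$ fail to be Cauchy and the series diverges, completing the contrapositive. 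The main obstacle is the case $\lambda_\XX(f)=\infty$, where the Fatou property is indispensable to convert global divergence of $\lambda_\XX(f)$ into divergence localized on successive finite index intervals; everything else reduces to the definitions.
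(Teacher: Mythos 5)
Your proof is correct and follows essentially the same route as the paper's: the forward implication is exactly the paper's observation that the set $G$ of admissible $f$ is a closed subspace of $\GG(\XX)$ containing $c_{00}$, and the converse for $f\in\GG(\XX)\setminus\GG_b(\XX)$ uses Lemma~\ref{lem:NS} together with the same choice of witness vectors in the unit ball. The only difference is that you handle the case $\lambda_\XX(f)=\infty$ explicitly by extracting blocks via the Fatou property, whereas the paper absorbs this case into the ``routine'' inclusion $G\subseteq\GG(\XX)$; your version is a self-contained (and welcome) filling-in of that detail.
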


\begin{proof}
Let $G$ denote the set consisting of all sequences $f=(a_n)_{n=1}^\infty\in\FF^\NN$ such that $\sum_{n=1}^\infty a_n\, x_n$ converges for every bounded sequence $(x_n)_{n=1}^\infty$ in $\XX$. It is routine to check that $G$ is a closed subspace of $\GG(\XX)$ which contains $c_{00}$. Consequently, $\GG_b(\XX)\subseteq G$. Assume that $f=(a_n)_{n=1}^\infty\in \GG(\XX)\setminus \GG_b(\XX)$. Then, by Lemma~\ref{lem:NS}, there are $\delta>0$ and an increasing sequence $(m_k)_{k=1}^\infty$ of non-negative integers such that $\rho ( ( |a_n|)_{n=1+m_{2k-1}}^{m_{2k}} )>\delta$ for all $k\in\NN$.
Consequently, there is $(x_n)_{n=1}^\infty$ in the unit ball of $\ell_\infty(\XX)$ such that
\[
\textstyle
\left\Vert \sum_{n=1+m_{2k-1}}^{m_{2k}} a_n\, x_n\right\Vert\ge \delta, \quad k\in\NN.
\]
We infer that $\sum_{n=1}^\infty a_n\, x_n$ does not converge.
\end{proof}

\begin{corollary}\label{cor:99}Let $\XX$ be a quasi-Banach space. Then the mapping
\[
B'\colon \GG_b(\XX) \times \ell_\infty(\XX) \to \XX, \quad \left( (a_n)_{n=1}^\infty, (x_n)_{n=1}^\infty\right) \mapsto \sum_{n=1}^\infty a_n \, x_n
\]
is well-defined, and defines a continuous bilinear map. Moreover, if $\GG_b(\XX)\subsetneq G\subseteq \GG(\XX)$, then $B'$ can not be extended to a continuous bilinear map defined on $G\times\ell_\infty(\XX)$.
\end{corollary}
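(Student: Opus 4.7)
The plan is to split the argument into three steps: well-definedness of $B'$, boundedness (continuity) of $B'$, and the non-extendability claim.

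For well-definedness, I invoke Lemma~\ref{lem:KPRG}: whenever $f=(a_n)\in\GG_b(\XX)$ and $(x_n)\in\ell_\infty(\XX)$, the series $\sum_n a_n x_n$ converges in $\XX$. For continuity I would first establish the estimate $\lVert B'(f,x)\rVert\le\lVert f\rVert_{\GG(\XX)}\lVert x\rVert_{\ell_\infty(\XX)}$ for $f\in c_{00}$ directly from the definition of $\lambda_\XX$: after scaling to $\lVert x_n\rVert\le 1$, setting $y_n=(\overline{a_n}/|a_n|)\,x_n$ when $a_n\neq 0$ (and a unit vector otherwise) yields $a_n x_n = |a_n| y_n$ with $\lVert y_n\rVert\le 1$, so $\lVert B'(f,x)\rVert\le\lambda_\XX(|f|)=\lVert f\rVert_{\GG(\XX)}$. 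I then pass to arbitrary $f\in\GG_b(\XX)$ by approximation: the truncations of $f$ converge to $f$ in $\GG(\XX)$ by definition of $\GG_b$, and the $c_{00}$-estimate ensures the corresponding partial sums converge in $\XX$ to $B'(f,x)$, establishing the bound and the continuity of $B'$.

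For the moreover part, I argue by contradiction, interpreting ``extension of $B'$'' in the natural sense that $\tilde B$ agrees with the formula $\sum_n a_n x_n$ wherever that formula is defined — in particular on $G\times c_{00}(\XX)$, where the sum is finite. Suppose such a continuous bilinear $\tilde B\colon G\times\ell_\infty(\XX)\to\XX$ exists. Pick $f\in G\setminus\GG_b(\XX)$. Running the proof of Lemma~\ref{lem:KPRG} via Lemma~\ref{lem:NS} produces $\delta>0$, an increasing integer sequence $(m_k)_{k=0}^\infty$ with $m_0=0$, and a sequence $(x_n)\in\ell_\infty(\XX)$ with $\lVert x_n\rVert\le 1$ such that, writing $B_k:=\{1+m_{2k-1},\dots,m_{2k}\}$ and $v_k:=\sum_{n\in B_k} a_n x_n$, we have $\lVert v_k\rVert\ge\delta$ for every $k$.

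For any $K\in\NN$ and any signs $(\epsilon_k)_{k=1}^K\in\{\pm 1\}^K$, the vector $y=\sum_{k=1}^K \epsilon_k\, x\chi_{B_k}$ lies in $c_{00}(\XX)$ with $\lVert y\rVert_\infty\le 1$, so by the assumption on $\tilde B$ we have $\tilde B(f,y) = \sum_{k=1}^K \epsilon_k v_k$, and continuity of $\tilde B$ gives the uniform bound $\bigl\lVert \sum_{k=1}^K \epsilon_k v_k\bigr\rVert\le C\lVert f\rVert_{\GG(\XX)}$, independent of $K$ and the signs. The main obstacle, and the technically hardest step, is to derive an actual contradiction from this sign-uniform boundedness of $(v_k)$ together with the lower bound $\lVert v_k\rVert\ge\delta$: by the Aoki-Rolewicz theorem $\XX$ is $p$-Banach for some $0<p\le 1$, and I would combine this $p$-convexity with a Rademacher/Khintchine-Kahane--type averaging argument to produce a sign sequence for which $\bigl\lVert \sum_{k=1}^K \epsilon_k v_k\bigr\rVert$ grows without bound, contradicting the uniform estimate and thereby completing the proof.
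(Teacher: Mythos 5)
Your treatment of well-definedness and continuity is sound. Well-definedness is exactly Lemma~\ref{lem:KPRG}, as in the paper. For continuity the paper's proof invokes the Open Mapping Theorem (as in Proposition~\ref{prop:bilinear}), whereas you prove the explicit bound $\lVert B'(f,x)\rVert\le\kappa\,\lambda_\XX(\lvert f\rvert)\,\lVert x\rVert_\infty$ by checking it on $c_{00}$ directly from the definition of $\lambda_\XX$ and then passing to the limit along truncations; this is a legitimate and arguably more elementary route (modulo the harmless slip $\overline{a_n}/\lvert a_n\rvert$ in place of $a_n/\lvert a_n\rvert$, and the need to use $\lVert x\rVert\le\kappa\liminf_n\lVert x_n\rVert$ rather than continuity of the quasi-norm when taking the limit).

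The ``moreover'' part, however, has a genuine gap, and the step you defer cannot be carried out. You extract block sums $v_k$ with $\lVert v_k\rVert\ge\delta$ and $\bigl\lVert\sum_{k=1}^K\epsilon_k v_k\bigr\rVert\le C$ uniformly in $K$ and in the signs, and propose to reach a contradiction by a Khintchine--Kahane averaging argument. No such contradiction is available: already in the Banach space $c_0$ the unit vectors $v_k=\ee_k$ satisfy $\lVert v_k\rVert=1$ and $\bigl\lVert\sum_{k=1}^K\epsilon_k v_k\bigr\rVert=1$ for every $K$ and every choice of signs, so sign-uniform boundedness of the partial sums is perfectly compatible with the terms staying bounded away from zero; this is the absence of finite cotype, which a general quasi-Banach space need not possess. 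The intended reading of ``extended'', suggested by the discussion preceding Lemma~\ref{lem:KPRG}, is that $B'$ is the map given by the series $\sum_n a_n\,x_n$, and the claim is that this formula does not define a map on $G\times\ell_\infty(\XX)$ at all. With that reading the ``moreover'' part is immediate from Lemma~\ref{lem:KPRG}: any $f\in G\setminus\GG_b(\XX)$ admits a bounded sequence $(x_n)_{n=1}^\infty$ for which $\sum_n a_n\,x_n$ diverges. Your more abstract interpretation (an arbitrary continuous bilinear map agreeing with the finite sums on $G\times c_{00}(\XX)$) is a strictly stronger assertion whose truth is unclear --- note that $\GG_b(\XX)$ is closed in $\GG(\XX)$, so no density argument forces such an extension to be given by the series --- and in any case your proposed route to it fails.
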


\begin{proof}
It follows from Lemma~\ref{lem:KPRG} and, alike the proof of Proposition~\ref{prop:bilinear}, the Open Mapping Theorem.
\end{proof}

In light of Corollary~\ref{cor:99}, the following question arise.

\begin{question}\label{question:A}
Is $\GG(\XX)$ minimal for any quasi-Banach space $\XX$?
\end{question}

Corollary~\ref{cor:96} alerts us of the connection between Question~\ref{question:A} and the existence of lower estimates for $\lambda_\XX$. Lattice concavity also plays a key role when studying galbs of vector-valued spaces.

\begin{definition}
We say that a symmetric function quasi-norm $\lambda$ over $\NN$ \emph{galbs} a quasi-Banach space $\XX$ if $\lambda$ dominates $\lambda_\XX$, i.e., $\LL_\lambda\subseteq \GG(\XX)$. We say that $\lambda$ galbs a function quasi-norm $\rho$ if it galbs $\LL_\rho$. If $\lambda$ galbs itself, we say that $\lambda$ is \emph{self-galbed}.
\end{definition}

\begin{remark}
Given $0<p\le 1$, the function quasi-norm defining $\ell_p$ is self-galbed. More generally, $\lambda_\XX$ is self-galbed for any quasi-Banach space $\XX$ (see Proposition~\ref{prop:GalbGalb}).
\end{remark}

\begin{proposition}\label{prop:vectorgalb}
Let $\rho$ and $\lambda$ be locally absolutely continuous $L$-convex 
function quasi-norms with the Fatou property. 
Suppose that $\lambda$ 
galbs a quasi-Banach space $\XX$. 
If there is $0<p<\infty$ such that $\lambda$ is $p$-concave and $\rho$ is $p$-convex, 
then $\lambda$ galbs $\LL_\rho(\XX)$.
\end{proposition}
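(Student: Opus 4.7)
The plan is to establish the domination inequality $\lambda_{\LL_\rho(\XX)} \le C\,\lambda$, which by definition says that $\lambda$ galbs $\LL_\rho(\XX)$. The strategy is to combine the pointwise scalarization supplied by the hypothesis that $\lambda$ galbs $\XX$ with the Minkowski-type interchange of $\rho$ and $\lambda$ afforded by Theorem~\ref{thm:MIIpconvex}.

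First I would fix $(a_n)_{n=1}^{\infty}$ with $\lambda((a_n)) < \infty$, a sequence $(y_n)_{n=1}^{\infty}$ with $\Vert y_n \Vert_{\LL_\rho(\XX)} \le 1$, and $N \in \NN$. Applying the definition of $\lambda_\XX$ pointwise to the (a.e.~defined) unit-norm vectors $y_n(\omega)/\Vert y_n(\omega) \Vert_\XX$ and using $\lambda_\XX \le C_1\,\lambda$ (since $\lambda$ galbs $\XX$) yields the scalarization
\[
\Bigl\Vert \sum_{n=1}^{N} a_n y_n(\omega) \Bigr\Vert_\XX \le C_1\,\lambda\bigl((a_n \Vert y_n(\omega) \Vert_\XX)_{n=1}^{\infty}\bigr).
\]
Setting $\phi(\omega,n) := a_n \Vert y_n(\omega) \Vert_\XX$, Proposition~\ref{prop:ms} ensures the measurability of $\lambda[2,\phi]\colon \omega \mapsto \lambda(\phi(\omega,\cdot))$, so applying $\rho$ on both sides gives
\[
\Bigl\Vert \sum_{n=1}^{N} a_n y_n \Bigr\Vert_{\LL_\rho(\XX)} \le C_1\,(\rho,\lambda)[1,2](\phi).
\]

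Next I would invoke Theorem~\ref{thm:MIIpconvex}. Since $\rho$ and $\lambda$ satisfy its standing assumptions (locally absolutely continuous, $L$-convex, with the Fatou property) and since $\lambda$ is $p$-concave while $\rho$ is $p$-convex, the pair $(\rho,\lambda)$ has the MII property with some constant $C_2$:
\[
(\rho,\lambda)[1,2](\phi) \le C_2\,(\lambda,\rho)[2,1](\phi) = C_2\,\lambda(\rho[1,\phi]).
\]
By positive homogeneity of $\rho$, one has $\rho[1,\phi](n) = |a_n|\,\Vert y_n \Vert_{\LL_\rho(\XX)} \le |a_n|$, whence $\lambda(\rho[1,\phi]) \le \lambda((a_n))$. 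Chaining the bounds and taking suprema over $N \in \NN$ and over $(y_n)$ in the unit ball of $\LL_\rho(\XX)$ delivers $\lambda_{\LL_\rho(\XX)}((a_n)) \le C_1 C_2\,\lambda((a_n))$, which is the claim.

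The chief obstacle is the correct identification of the MII direction $\rho(\lambda(\cdot)) \le C\,\lambda(\rho(\cdot))$: the hypotheses on the lattice-convexity types of $\rho$ and $\lambda$ are tailored so that Theorem~\ref{thm:MIIpconvex} delivers precisely this inequality for the pair $(\rho,\lambda)$. The remaining ingredients---the pointwise scalarization via $\lambda_\XX \le C_1\,\lambda$, the measurability from Proposition~\ref{prop:ms}, and the computation of $\rho[1,\phi]$---are routine once this central observation is in place.
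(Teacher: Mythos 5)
Your proof is correct and follows essentially the same route as the paper's: scalarize pointwise via the domination $\lambda_\XX\le C_1\lambda$, apply $\rho$, interchange $\rho$ and $\lambda$ by the MII property from Theorem~\ref{thm:MIIpconvex}, and finish with $\rho(\lVert y_n\rVert)\le 1$. The only (harmless) differences are that you make the measurability step via Proposition~\ref{prop:ms} explicit and label the MII pair as $(\rho,\lambda)$ where the paper writes $(\lambda,\rho)$; the inequality actually invoked, $\rho(\lambda(\cdot))\le C\,\lambda(\rho(\cdot))$, is the same in both.
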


\begin{proof}
By Theorem~\ref{thm:MIIpconvex}, 
the pair $(\lambda,\rho)$ has the MII property for some constant $C$. 
Since $\lambda$ galbs $\XX$, there is a constant $K>0$ such that 
$\lambda$ $K$-dominates $\lambda_\XX$. 
Therefore, if $(a_n)_{n=1}^\infty$ is a sequence in $\LL_\lambda$,   
and $f_1,\dotsc,f_N$ belong the unit ball of $\LL_\rho(\XX)$, we have 
\begin{align*}
\rho\left( \left\Vert \sum_{n=1}^N a_n \, f_n \right\Vert\right)
&\le \rho \left(\lambda_\XX \left( (a_n \, \Vert f_n\Vert)_{n=1}^N\right) \right)\\
&\le K \rho \left(\lambda \left( (a_n \, \Vert f_n\Vert)_{n=1}^N\right) \right)\\
&\le CK\lambda \left(\rho \left( (a_n \, \Vert f_n\Vert)_{n=1}^N\right) \right)\\
&\le CK \lambda\left( (a_n)_{n=1}^N \right)\\
&\le CK \lambda((a_n)_{n=1}^\infty).
\end{align*}
Hence $(a_n)_{n=1}^\infty$ belongs the galb of $\LL_\rho(\XX)$.
\end{proof}

Proposition~\ref{prop:vectorgalb} gives, in particular, that if $\lambda$ is a $1$-concave function quasi-norm which galbs $\XX$, then it galbs $L_1(\mu,\XX)$. As we plan to develop an integral for functions belonging to a suitable subspace of $L_1(\mu,\XX)$, the following question arises.

\begin{question}\label{question:B}
Is $\GG(\XX)$ $1$-concave for any quasi-Banach space $\XX$?
\end{question}

Note that a positive answer to Question~\ref{question:B} would yield a positive answer to Question~\ref{question:A}. To properly understand Question~\ref{question:B}, we must go over the state-of-the-art of the theory galbs.

We point out that all known examples suggest a positive answer to Question~\ref{question:B}. Galbs of Lorentz spaces were explored through several papers \cites{SteinWeiss1969,Sjogren1990,Sjogren1992,ColzaniSjogren1999,CCS2007} within the study of convolution operators, and all computed galbs occur to be Orlicz sequence spaces modeled after a concave Orlicz function. Also, Turpin \cite{Turpin1976} proved that the galb of any locally bounded Orlicz space is an Orlicz sequence space modeled after a concave Orlicz function. Recall that an \emph{Orlicz function} is a non-null left-continuous non-decreasing function $\varphi\colon [0,\infty)\to[0,\infty)$ such that $\lim_{t\to 0^+}\varphi(t)=0$. Given an Orlicz function $\varphi$, with the convention that $\varphi(\infty)=\infty$, the gauge
\[
f=(a_n)_{n=1}^\infty\mapsto \lambda_\varphi(f)=\inf\left\{ t>0 \colon \sum_{n=1}^\infty \varphi\left(\frac{a_n}{t} \right)\le 1\right\}, \quad f\in[0,\infty]^\NN
\]
is a function quasi-norm if and only if
\begin{equation}\label{eq:OrliczLC}
\lim_{t\to 0^+}\sup_{u\in (0,1]}\frac{\varphi(tu)}{\varphi(u)}=0.
\end{equation}
(see \cite{Turpin1976}), in which case $\lambda_\varphi$ has the Fatou property. If \eqref{eq:OrliczLC} holds, the Orlicz sequence space $\ell_\varphi$ is the K\"othe space associated with $\lambda_\varphi$.

\begin{proposition}\label{exampleofRMI}
Let $\varphi$ be a concave Orlicz function fulfilling \eqref{eq:OrliczLC}. Then $\lambda_\varphi$ is lattice $1$-concave.
\end{proposition}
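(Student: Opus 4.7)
The plan is to establish the strong form of lattice $1$-concavity with the optimal constant $C=1$, namely
\[
\sum_{j=1}^{J}\lambda_\varphi(f^{(j)})\le \lambda_\varphi\biggl(\sum_{j=1}^{J} f^{(j)}\biggr)
\]
for positive sequences $f^{(j)}=(a_n^{(j)})_{n=1}^\infty$, $1\le j\le J$. First I would dispose of degenerate cases: by monotonicity of $\lambda_\varphi$, if some $\lambda_\varphi(f^{(j)})$ is infinite then so is the right-hand side, and indices with $\lambda_\varphi(f^{(j)})=0$ simply do not contribute on the left. So I may assume $t_j:=\lambda_\varphi(f^{(j)})\in(0,\infty)$, set $T:=\sum_j t_j$ and $g_n:=\sum_j a_n^{(j)}$, and reduce the goal to $\lambda_\varphi(g)\ge T$.

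The core step combines an averaging argument with an approximation trick. Rather than working directly at $s=T$, I would pick an arbitrary $s<T$ and split it as $s=\sum_j s_j$ via $s_j:=t_j\, s/T$, so that each $s_j<t_j$ and $\sum_j s_j=s$. Because $s_j$ sits strictly below the Luxemburg infimum defining $t_j$, the condition $\sum_n\varphi(a_n^{(j)}/s_j)\le 1$ must fail, which gives the strict inequality $\sum_n\varphi(a_n^{(j)}/s_j)>1$. I would then write
\[
\frac{g_n}{s}=\sum_{j=1}^{J}\frac{s_j}{s}\cdot\frac{a_n^{(j)}}{s_j}
\]
as a convex combination and apply the concavity of $\varphi$ (with $\varphi(0)=0$, which follows from monotonicity together with $\lim_{t\to 0^+}\varphi(t)=0$) pointwise to obtain the Jensen-type bound $\varphi(g_n/s)\ge\sum_{j}(s_j/s)\,\varphi(a_n^{(j)}/s_j)$. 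Summing over $n$ and inserting the strict inequality yields
\[
\sum_n\varphi\!\left(\frac{g_n}{s}\right)\ge\sum_{j=1}^{J}\frac{s_j}{s}\sum_n\varphi\!\left(\frac{a_n^{(j)}}{s_j}\right)>\sum_{j=1}^{J}\frac{s_j}{s}=1,
\]
so by the very definition of the Luxemburg gauge, $\lambda_\varphi(g)\ge s$. Letting $s\nearrow T$ closes the argument.

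The subtle point that makes the whole plan work is the non-attainment of the Luxemburg infimum: in general one cannot assert $\sum_n\varphi(a_n^{(j)}/t_j)=1$, only $\le 1$, and a direct attempt at $s=T$ would produce $\sum_n\varphi(g_n/T)\ge\sum_j(t_j/T)\sum_n\varphi(a_n^{(j)}/t_j)$, whose right-hand side need not reach $1$. The strict inequality gained by working slightly below $T$ is exactly what the Jensen step needs in order to produce the decisive bound $\sum_n\varphi(g_n/s)>1$. Apart from this approximation, the proof is simply Jensen's inequality for the concave function $\varphi$ applied pointwise in $n$, so the extra structural hypothesis \eqref{eq:OrliczLC} enters only implicitly, through its role of ensuring that $\lambda_\varphi$ is a genuine function quasi-norm to begin with.
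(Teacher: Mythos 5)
Your proposal is correct and follows essentially the same route as the paper's proof: choose weights strictly below each Luxemburg value so that $\sum_n\varphi(a_n^{(j)}/s_j)>1$, apply Jensen's inequality for the concave $\varphi$ to the convex combination $g_n/s=\sum_j (s_j/s)(a_n^{(j)}/s_j)$, and conclude $\lambda_\varphi(g)\ge s$ before letting $s$ increase to the sum. The only cosmetic difference is your explicit proportional split $s_j=t_j s/T$, where the paper merely picks any admissible $(t_j)$ with $\sum_j t_j=t$ and $t_j<\lambda_\varphi(f_j)$.
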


\begin{proof}
Let $(f_j)_{j=1}^J$ be a finite family consisting of non-negative sequences. We will prove that
\[
H:=\sum_{j=1}^J \lambda_\varphi(f_j)\le G:=\lambda_\varphi\left(\sum_{j=1}^J f_j\right).
\]
To that end, it suffices to prove that if $G<\infty$ and $0<t<H$, then, $t<G$. Assume without loss of generality that $\lambda_\varphi(f_j)>0$ for all $j$. Then, pick $(t_j)_{j=1}^J$ such that $\sum_{j=1}^J t_j=t$ and $0<t_j<\rho(f_j)$. Then, if $f_j=(a_{j,n})_{n=1}^\infty$, $a_{j,n}<\infty$ for all $n\in\NN$, and
\[
\sum_{n=1}^\infty \varphi\left(\frac{ a_{j,n}}{t_j}\right)>1, \quad j=1,\dots,J.
\]
Consequently,
\[
\sum_{n=1}^\infty \varphi\left(\frac{\sum_{j=1}^J a_{j,n}}{t}\right)
=\sum_{n=1}^\infty \varphi\left(\sum_{j=1}^J \frac{t_j}{t} \frac{a_{j,n}}{t_j}\right)
\ge \sum_{n=1}^\infty \sum_{j=1}^J \frac{t_j}{t} \varphi\left( \frac{ a_{j,n}}{t_j}\right)>1.
\]
Therefore, $t<G$.
\end{proof}

The lattice convexity of spaces of galbs is also quite unknown. It is known that if the gauge $\lambda_\varphi$ associated with an Orlicz function $\varphi$ is  function quasi-norm, so that $\ell_\varphi$ is a quasi-Banach lattice, then there is $p>0$ such that
\begin{equation}\label{eq:OrliczConvex}
\sup_{0<u,t\le 1} \frac{\varphi(t\, u)}{u^p \varphi(t)}<\infty
\end{equation}
(see \cite{Kalton1977}*{Proposition 4.2}). Moreover, if \eqref{eq:OrliczConvex} holds for a given $p$, then  $\ell_\varphi$ is a $p$-convex lattice. Therefore, $\ell_\varphi$ is $L$-convex. The behavior of general spaces of galbs is unknown.
 \begin{question}\label{question:C}
 Is $\lambda_\XX$ an $L$-convex function quasi-norm for any quasi-Banach space $\XX$?
\end{question}
Note that Proposition~\ref{prop:FQG}~\ref{galb:latticecon} partially solves in the positive Question~\ref{question:C}.

\section{Topological tensor products built by means of symmetric function quasi-norms over $\NN$}\label{sec:tensor}
\noindent
\begin{definition}\label{def:tensorlambda}
Let $\XX$ and $\YY$ be quasi-Banach spaces and $\lambda$ be a symmetric minimal function quasi-norm with the Fatou property. We define
\[
\Vert \cdot \Vert_{\XX\otimes_\lambda \YY} \colon \XX\otimes\YY \to [0,\infty)
\]
by
\[
\Vert \tau \Vert_{\XX\otimes_\lambda \YY} =\inf\left\{ \lambda\left( \left( \Vert x_j\Vert \, \Vert y_j\Vert\right)_{j=1}^n\right) \colon \tau=\sum_{j=1}^n x_j\otimes y_j\right\}.
\]
\end{definition}

It is clear that $\Vert \cdot \Vert_{\XX\otimes_\lambda \YY}$ is a semi-quasi-norm whose modulus of concavity is at most that of $\lambda$, and that $\Vert x\otimes y\Vert_{\XX\otimes_\lambda \YY}\le \Vert x\Vert\, \Vert y\Vert$ for all $x\in\XX$ and $y\in\YY$.

\begin{definition}
Let $\XX$ and $\YY$ be quasi-Banach spaces and $\lambda$ be a symmetric minimal function quasi-norm with the Fatou property. The quasi-Banach space built from $\Vert \cdot \Vert_{\XX\otimes_\lambda \YY}$ will be called the \emph{topological tensor product of $\XX$ and $\YY$ by $\lambda$}, and will be denoted by $\XX\otimes_\lambda \YY$. The canonical norm-one bilinear map from $\XX\times\YY$ to $\XX\otimes_\lambda \YY$ given by $(x,y)\mapsto x\otimes y$ will be denoted by $T_\lambda[\XX,\YY]$.
\end{definition}

\begin{proposition}\label{prop:TP}
Let $\XX, \YY, \UU$ and $\VV$ be quasi-Banach spaces, and let $\lambda$ be a symmetric minimal function quasi-norm with the Fatou property. 
\begin{enumerate}[label=(\roman*),leftmargin=*,widest=viii]
\item\label{prop:TP:pC} If $\lambda$ is a function $p$-norm, $0<p\le 1$, then $\XX\otimes_\lambda \YY$ is a $p$-Banach space.
\item\label{prop:TP:Galb} $\GG(\LL_\lambda) \subseteq \GG(\XX\otimes_\lambda \YY)$.
\item\label{prop:TP:UP} If $\lambda$ galbs $\UU$, there is a constant $C$ such that for every bounded bilinear map $B\colon \XX\times \YY\to\UU$ there is a unique linear map $B_\lambda\colon \XX\otimes_\lambda \YY\to\UU$ such that $B_\lambda\circ T_\lambda[\XX,\YY]=B$ and $\Vert B_\lambda\Vert \le C \Vert B\Vert$.
\item\label{prop:TP:CD} If
$R\colon \XX\to\UU$ and $S\colon\YY\to \VV$ are bounded linear operators, then there is a unique bounded linear operator $R\otimes_{\lambda} S\colon \XX\otimes_\lambda\YY\to \UU\otimes_\lambda\VV$ such that $(R\otimes_{\lambda} S)\circ T_\lambda[\XX,\YY]=T_\lambda[\UU,\VV] \circ (R,S)$.
\item\label{prop:TP:DS}
If $\UU$ is complemented in $\XX$ through $R$ and $\VV$ is complemented in $\YY$ through $S$, then $\UU\otimes_\lambda\VV$ is complemented in $\XX\otimes_\lambda\YY$ through $R\otimes_{\lambda} S$. Moreover, if $\UU^c$ and $\VV^c$ are such that $\XX\simeq\UU\oplus \UU^c$ and $\YY\simeq\VV\oplus \VV^c$, then
\[
\XX\otimes_\lambda\YY\simeq (\UU\otimes_\lambda\VV)\oplus (\UU\otimes_\lambda\VV^c)\oplus (\UU^c\otimes_\lambda\VV)\oplus (\UU^c\otimes_\lambda\VV^c).
\]
\item\label{prop:TP:Emb} Let $\rho$ be a symmetric minimal function quasi-norm with the Fatou property. If $\rho$ dominates $\lambda$, then there is a bounded linear map $I\colon \XX\otimes_\rho \YY \to \XX\otimes_\lambda \YY$ such that $I\circ T_\rho[\XX,\YY]= T_\lambda[\XX,\YY]$.
\item\label{prop:TP:Equiv} There is a constant $C$ such that if $(x_j)_{j=1}^\infty$ in $\XX$ and $(y_j)_{j=1}^\infty$ in $\YY$ are such that
\begin{equation}\label{eq:23}
H=\lambda\left(\left(\Vert x_j\Vert \,\Vert y_j\Vert\right)_{j=1}^\infty\right)<\infty.
\end{equation}
then $\sum_{j=1}^\infty x_j\otimes y_j$ converges in $\XX\otimes_\lambda \YY$ to a vector $\tau\in \XX\otimes_\lambda \YY$ with $\Vert \tau \Vert_{\XX\otimes_\lambda \YY}\le C H$. Conversely, for all $\tau \in \XX\otimes_\lambda \YY$ and $\varepsilon>0$ there are $(x_n)_{n=1}^\infty$ in $\XX$ and $(y_n)_{n=1}^\infty$ in $\YY$ such that, if
\[
f:=(\Vert x_j\Vert \,\Vert y_j\Vert)_{j=1}^\infty,
\]
then $\lambda(f)\le\varepsilon+ C \Vert \tau\Vert_{\XX\otimes_\lambda \YY}$ and $\tau=\sum_{j=1}^\infty x_j\otimes y_j$. Moreover, if $\lambda$ is a function $p$-norm, we can pick $C=1$. And, if $\XX_0$ and $\YY_0$ are dense subspaces of $\XX$ and $\YY$ respectively, we can pick $x_j\in\XX_0$ and $y_j\in\YY_0$ for all $j\in\NN$.
\item\label{prop:TP:Y=Fn} If $\lambda$ galbs $\XX$ and $\YY$ is finite dimensional, then $\XX\otimes_\lambda \YY\simeq\XX^n$, where $n=\dim(\YY)$. To be precise, if $(\yy_j)_{j=1}^n$ is a basis of $\YY$, the map $R\colon\XX^n\to \XX\otimes_\lambda \YY$ given by $(x_j)_{j=1}^n\mapsto \sum_{j=1}^n x_j\otimes \yy_j$ is an isomorphism.
\item\label{prop:TP:SP} If $\lambda$ galbs $\XX$ and $\YY$ has the point separation property, then $\Vert \cdot \Vert_{\XX\otimes_\lambda \YY}$ is a quasi-norm on $\XX\otimes \YY$.
\end{enumerate}
\end{proposition}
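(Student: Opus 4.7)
The plan is to verify the only nontrivial quasi-norm axiom, namely \ref{it:H}: the paragraph following Definition~\ref{def:tensorlambda} already records that $\Vert \cdot \Vert_{\XX \otimes_\lambda \YY}$ is a semi-quasi-norm, so it suffices to show that $\Vert \tau \Vert_{\XX \otimes_\lambda \YY} = 0$ forces $\tau = 0$ as an element of the algebraic tensor product $\XX \otimes \YY$.

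First I would exploit linear functionals on $\YY$ to detect $\tau$ through its action on $\XX$. For each $y^* \in \YY^*$ the bilinear map $B_{y^*}\colon \XX \times \YY \to \XX$, $(x,y) \mapsto y^*(y)\, x$, is bounded with norm at most $\Vert y^* \Vert$. Since $\lambda$ galbs $\XX$ by hypothesis, part~\ref{prop:TP:UP} linearizes $B_{y^*}$ to a bounded map $\widetilde{B}_{y^*}\colon \XX \otimes_\lambda \YY \to \XX$ satisfying $\widetilde{B}_{y^*}(x \otimes y) = y^*(y)\, x$ on elementary tensors. If $\Vert \tau \Vert_{\XX \otimes_\lambda \YY} = 0$, then the image of $\tau$ in the quasi-Banach space $\XX \otimes_\lambda \YY$ vanishes, so $\widetilde{B}_{y^*}(\tau) = 0$ for every $y^* \in \YY^*$.

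Next I would reduce the problem to finite-dimensional linear algebra. Write $\tau = \sum_{j=1}^{m} x_j \otimes y_j$ with $y_1, \ldots, y_m \in \YY$ linearly independent; the previous step then gives
\[
\sum_{j=1}^{m} y^*(y_j)\, x_j = 0 \quad \text{for every } y^* \in \YY^*.
\]
Setting $V = \spn\{y_1, \ldots, y_m\}$, the point separation property says the restriction map $\YY^* \to V^*$ has image separating points of $V$; because $V$ is finite-dimensional, this image must equal all of $V^*$, yielding biorthogonal functionals $y_i^* \in \YY^*$ with $y_i^*(y_j) = \delta_{i,j}$. Inserting each $y_i^*$ into the displayed identity produces $x_i = 0$, so $\tau = 0$.

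The only delicate step, and the main obstacle worth flagging, is promoting the pointwise separation property to the existence of biorthogonal functionals on a finite-dimensional subspace; this is where the finite-dimensionality of $V$ (so that any separating subspace of $V^*$ must already be all of $V^*$) does the essential work. Everything else is a direct application of the universal property proved in part~\ref{prop:TP:UP}.
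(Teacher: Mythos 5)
Your proposal addresses only item \ref{prop:TP:SP} of the proposition. Items \ref{prop:TP:pC}--\ref{prop:TP:Y=Fn} are not touched at all, and several of them require genuine arguments in the paper (the universal property \ref{prop:TP:UP}, the complementation statement \ref{prop:TP:DS}, the series description \ref{prop:TP:Equiv} of elements of the completion, and the isomorphism $\XX\otimes_\lambda\YY\simeq\XX^n$ of \ref{prop:TP:Y=Fn}). Note moreover that your argument for \ref{prop:TP:SP} explicitly invokes \ref{prop:TP:UP}, so that item in particular cannot be taken for granted. As it stands, this is a proof of one of nine claims, which is the concrete gap.

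For the part you do prove, the reasoning is correct: the linearized maps $\widetilde{B}_{y^*}$ annihilate any norm-zero element of the algebraic tensor product, and promoting the point separation property to a biorthogonal system on the finite-dimensional span $V=\spn\{y_1,\dots,y_m\}$ is legitimate, since a separating subspace of $V^*$ must be all of $V^*$ when $\dim V<\infty$. It is worth noting that this is not the route the paper takes inside the proof of Proposition~\ref{prop:TP}: there, the authors observe that a finite-dimensional subspace $\VV\subseteq\YY$ is complemented in $\YY$, use \ref{prop:TP:DS} to reduce to the case of finite-dimensional $\YY$, and then read off injectivity from the isomorphism of \ref{prop:TP:Y=Fn}. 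Your argument is instead essentially the ``another approach'' the authors themselves sketch at the opening of Section~\ref{sec:integration}, via the bilinear map $(x,y)\mapsto y^*(y)\,x$ valued in $\ell_\infty(B_{\YY^*},\XX)$. Both routes rest on the same ingredients (the universal property and finite-dimensional duality), so neither is a substantial simplification of the other; to count as a proof of the proposition you must still supply the remaining eight items.
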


\begin{proof}
A simple computation yields \ref{prop:TP:pC}.

Let $f=(a_k)_{k=1}^\infty\in[0,\infty)^\NN$, and let $(\tau_k)_{k=1}^m$ in $\XX\otimes \YY$ be such that $\Vert \tau_k\Vert_{\XX\otimes_\lambda \YY}\le 1$. Then, given $\varepsilon>0$, for each $k=1,\dots,m$ there is an expansion
\[
\tau_k=\sum_{j=1}^{n_k} b_{k,j} \, x_{k,j} \otimes y_{k,j},
\]
with $\max\{\Vert x_{k,j}\Vert , \Vert y_{k,j}\Vert \}\le 1$ for all $(k,j)\in\Nt:=\{(k,j) \in\NN^2 \colon 1\le k \le m, \ 1\le j\le n_k\}$ and $\lambda((b_{k,j})_{k=1}^{n_j})\le 1 +\varepsilon$. The expansion
\[
\tau:=\sum_{k=1}^m a_k\, \tau_k=\sum_{(k,j)\in\Nt} a_k \, b_{k,j} \, x_{k,j}\otimes y_{k,j}
\]
gives
\[
\Vert \tau\Vert_{\XX\otimes_\lambda \YY}\le \Vert (a_k \, b_{k,j})_{(k,j)\in\Nt}\Vert_\lambda \le (1+\varepsilon) \lambda_{\LL_\lambda}(f).
\]
Consequently, $\lambda_{\XX\otimes_\lambda\YY}(f) \le \lambda_{\LL_\lambda}(f)$, and we obtain \ref{prop:TP:Galb}. 

Let us prove \ref{prop:TP:UP}. Let $C$ be such that $\Vert \sum_{j=1}^n a_j \, u_j\Vert \le C \lambda( (a_j)_{j=1}^n)$ for all $(a_j)_{j=1}^n$ in $[0,\infty)^n$ and $(u_j)_{j=1}^n$ in $B_\UU$. Given a bounded bilinear map $B\colon \XX\times\YY\to\UU$, let $B_0\colon\XX\otimes\YY\to\UU$ be the linear map defined by $B(x\otimes y)=B(x,y)$. Given $\tau=\sum_{j=1}^n x_k\otimes y_k\in \XX\otimes\YY$ we have
\[
\Vert B_0(\tau) \Vert
\le C \lambda( ( \Vert B(x_j,y_j)\Vert)_{j=1}^n)
\le C\Vert B\Vert \lambda( ( \Vert x_j\Vert \, \Vert y_j \Vert)_{j=1}^n).
\]
Consequently, $\Vert B_0(\tau) \Vert \le C \Vert B\Vert \Vert \tau\Vert_{\XX\otimes_\lambda\YY}$. We infer that $B_0$ `extends' to an operator as desired.

Now we prove \ref{prop:TP:CD}. Let $\tau\in\XX\otimes\YY$. The mere definitions of the semi-quasi-norms involved give
\begin{align*}
\Vert (R\otimes_\lambda S)\tau\Vert_{\UU\otimes_\lambda\VV}
&\le\inf\Big\{\lambda\left( \left( \Vert R(x_j)\Vert \, \Vert S(y_j)\Vert\right)_{j=1}^n\right)\colon \tau=\sum_{j=1}^n x_j\otimes y_j\Big\}\\
&\le\Vert R\Vert \, \Vert S\Vert\, \Vert \tau\Vert_{\XX\otimes_\lambda\YY}.
\end{align*}

For statement \ref{prop:TP:DS}, it suffices to consider the case when $\VV=\YY$ and $S_v=\Id_\YY$. Let $I\colon \UU\to \XX$ and $P\colon\XX\to\UU$ be such that $P\circ I=\Id_\UU$. Then $(P\otimes_{\lambda}\Id_\YY) \circ (I\otimes_{\lambda}\Id_\YY)=\Id_{\UU\otimes_{\lambda}\YY}$. Let $J\colon \UU^c\to \XX$ and $Q\colon\XX\to\UU^c$ be such that $Q\circ J=\Id_{\UU^c}$ and $J\circ Q+I\circ P=\Id_\XX$. Then
\[
(I\otimes_{\lambda}\Id_\YY) \circ (P\otimes_{\lambda}\Id_\YY) + (J\otimes_{\lambda}\Id_\YY) \circ (Q\otimes_{\lambda}\Id_\YY)=\Id_{\XX\otimes_{\lambda}\YY}.
\]

Statement \ref{prop:TP:Emb} is immediate from definition.

Let us prove \ref{prop:TP:Equiv}. Assume without lost of generality that $\lambda$ is function $p$-norm for some $0<p\le 1$. If \eqref{eq:23} holds, then $\sum_{j=1}^\infty x_j\otimes y_j$ is a Cauchy series. Therefore, it converges to $\tau\in\XX\otimes_\lambda\YY$. The continuity of the quasi-norm $\Vert \cdot\Vert_{\XX\otimes_\lambda\YY}$ yields
\[
\Vert \tau\Vert_{\XX\otimes_\lambda\YY}=\lim_m \left\Vert \sum_{j=1}^m x_j\otimes y_j\right\Vert_{\XX\otimes_\lambda\YY} \le H.
\]
Conversely, let $\tau\in\XX\otimes_\lambda\YY$ and $\varepsilon>0$. 
Assume that $\XX_0$ and $\YY_0$ are dense subspaces of $\XX$ and $\YY$ respectively. 
Pick $(\tau_n)_{n=1}^\infty$ in $\XX_0\otimes\YY_0$ such that $\lim_n \Vert \tau-\tau_n\Vert_{\XX\otimes_\lambda\YY}=0$, and pick a sequence $(\varepsilon_n)_{n=1}^\infty$ of positive numbers with
\[
\varepsilon_1>\Vert \tau\Vert_{\XX\otimes_\lambda\YY}> \left(\sum_{n=1}^\infty \varepsilon_n^p\right)^{1/p}-\varepsilon.
\]
Passing to a subsequence we can suppose that $\Vert \tau_n-\tau_{n-1} \Vert_{\XX\otimes_\lambda\YY}< \varepsilon_n$ for all $n\in\NN$, with the convention $\tau_0=\tau$. Therefore, for all $n\in\NN$, we can write
\[
\tau_n-\tau_{n-1}=\sum_{j=1}^{j_n} x_{j,n} \otimes y_{j,n}, \ \ R_n:=\lambda \left( \left( \Vert x_{j,n} \Vert \, \Vert y_{j,n}\Vert \right)_{j=1}^{j_n}\right)<\varepsilon_n.
\]
Let $\Nt=\{(j,n)\in\NN^2 \colon 1\le j \le j_n \}$. Then 
\[
\lambda\left( \left( \Vert x_{j,n} \Vert \, \Vert y_{j,n} \Vert \right)_{(j,n)\in\Nt}\right) \le\left( \sum_{n=1}^\infty R_n^p\right)^{1/p} \le \varepsilon+\Vert \tau\Vert_{\XX\otimes_\lambda\YY}.
\]
Hence, we can safely define $\tau'= \sum_{(j,n)\in\Nt} x_{j,n} \otimes y_{j,n}$, and we have
\[
\tau'=\sum_{n=1}^\infty \sum_{j=1}^{j_n} x_{j,n} \otimes y_{j,n}=\sum_{n=1}^\infty (\tau_n-\tau_{n-1})=\lim_n\tau_n=\tau.
\]

Now we prove \ref{prop:TP:Y=Fn}. The mapping $R$ is linear and bounded, and $R(\XX^n)$ spans $\XX\otimes_\lambda\YY$. Since $\lambda$ galbs $\XX$, there is a bounded linear map $S\colon\XX\otimes_\lambda\YY\to \XX^n$ such that $S(x\otimes \yy_j)=x\,\ee_j$ for all $x\in\XX$ and $j=1,\dotsc,n$. Taking into account that $S\circ R=\Id_{\XX^n}$, we are done.

Finally, let $\VV$ be finite-dimensional subspace of $\YY$. Since $\VV$ is complemented in $\YY$, $\XX\otimes_\lambda \VV$ is complemented in
$\XX\otimes_\lambda \YY$ via the canonical map. Hence, it suffices to consider the case when $\YY$ is finite dimensional. In this particular case, statement \ref{prop:TP:SP} follows from \ref{prop:TP:Y=Fn}.
\end{proof}

\section{Topological tensor products as spaces of functions and integrals for spaces of vector-valued functions}\label{sec:integration}
\noindent
Let us give another approach to the proof of Proposition~\ref{prop:TP}~\ref{prop:TP:SP}. Given quasi-Banach spaces $\XX$ and $\YY$, let $B\colon \XX \times \YY\to \ell_\infty(B_{\YY^*},\XX)$ be defined by $B(x,y)(y^*)=y^*(y) x$. Since $B$ is linear and bounded, if $\lambda$ galbs $\XX$, there is a bounded linear map $B_\lambda\colon \XX \otimes_\lambda \YY\to \ell_\infty(B_{\YY^*},\XX)$ given by $B_\lambda(x\otimes y)(y^*)=y^*(y) x$. If $\YY$ has the point separation property, then $B_\lambda$ is one-to-one on $\XX \otimes \YY$. Consequently, no vector in $\XX \otimes \YY$ is norm-zero. Note the injectivity of $B_\lambda$ on $\XX \otimes \YY$ does not implies the injectivity of $B_\lambda$ on its closure $\XX \otimes_\lambda \YY$. That is, we can not, a priori, identify vectors in $\XX \otimes_\lambda \YY$ with functions defined over $B_{\YY^*}$. More generally, if $\YY$ embeds in $\FF^{\Omega}$ for some set $\Omega$, then $\XX\otimes\YY$ embeds into $\XX^\Omega$, and it is natural to wonder if the character of the members of $\XX\otimes\YY$ is preserved when taking the completions, that is, if we can regard the vectors in $\XX\otimes_\lambda\YY$ as $\XX$-valued functions defined on $\Omega$. In this section, we address this question in the case when $\YY$ is a K\"othe space. 

Given a quasi-Banach space $\XX$ and a $\sigma$-finite measure space $(\Omega,\Sigma,\mu)$ we have a canonical linear map
\[
J[\XX,\mu]\colon \XX\otimes L_0(\mu)\to L_0(\mu,\XX), \quad x\otimes f \mapsto x f.
\]
It is routine to check that $J[\XX,\mu]$ is one-to-one. Suppose that $\lambda$ is a symmetric function quasi-norm and $\rho$ is a function quasi-norm over $(\Omega,\Sigma,\mu)$ such that $\lambda$ is $p$-concave and $\rho$ is $p$-convex for some $0<p<\infty$. Then $\lambda$ is minimal (see Corollary~\ref{cor:96}). So, we can safely define $\XX\otimes_\lambda \LL_\rho$. If, moreover, $\lambda$ galbs $\XX$, then $\lambda$ also galbs $\LL_\rho(\XX)$ (see Proposition~\ref{prop:vectorgalb}). 
Hence, if $\rho$ has the weak Fatou property, there is a bounded linear canonical map
\[
J[\rho,\XX,\lambda]\colon \XX\otimes_\lambda \LL_\rho\to \LL_\rho(\XX), \quad x\otimes f \mapsto x f.
\]
Consider the range
\[
\LL_\rho^\lambda(\XX):=J[\rho,\XX,\lambda] (\XX\otimes_\lambda \LL_\rho)
\]
of this operator endowed with the quotient topology. If $J[\rho,\XX,\lambda]$ is one-to-one, then $\LL_\rho^\lambda(\XX)$ is a space isometric to $\XX\otimes_\lambda \LL_\rho$ which embeds continuously into $\LL_\rho(\XX)$. This is our motivation to studying the injectivity of $J[\rho,\XX,\lambda]$. Vogt \cite{Vogt1967} gave a positive answer to this question in the case when $\lambda$ is the function quasi-norm associated with $\ell_p$ for some $0<p\le 1$ and $\rho$ is the function quasi-norm associated with $L_q(\mu)$ for some $p\le q\le \infty$. A detailed analysis of the proof of \cite{Vogt1967}*{Satz 4} reveals that it depends heavily on the fact that $\lambda$ is both $p$-convex and $p$-concave and $\rho$ is both $q$-convex and $q$-concave. So, it is hopeless to try to extend this result using analogous ideas. In this paper, we use an approach based on conditional expectations.

Before going on, let us mention that if $\lambda$ is the function quasi-norm associated with $\ell_1$ (and $\rho$ and $\XX$ are $1$-convex), then a routine computation yields that $J[\rho,\XX,\lambda]$ is an isometric embedding when restricted to $\XX\otimes \Sp(\mu)$. We infer that $J[\rho,\XX,\lambda]$ is an isometric embedding and that $\LL_\rho^\lambda(\XX)$ consists of all strongly measurable functions in $\LL_\rho(\XX)$.

\begin{lemma}\label{lem:TO}
Let $\lambda$ be a minimal symmetric function quasi-norm. For $i=1,2$, let $\rho_i$ be a function quasi-norm with the weak Fatou property over a $\sigma$-finite measure space $(\Omega_i,\Sigma_i,\mu_i)$, and let $\XX_i$ be a quasi-Banach space galbed by $\lambda$. Suppose that the bounded linear operators $S\colon\XX_1\to\XX_2$, $T\colon \LL_{\rho_1} \to \LL_{\rho_2}$ and $R\colon \LL_{\rho_1}(\XX_1)\to \LL_{\rho_2}(\XX_2)$ satisfy
\[
R(x\, f)=S(x) \,T(f), \quad x\in\XX_1,\ f\in \LL_{\rho_1}.
\]
Then, $R$ restricts to a bounded linear map from $\LL_{\rho_1}^{\lambda}(\XX_1)\to \LL_{\rho_2}^\lambda(\XX_2)$.
\end{lemma}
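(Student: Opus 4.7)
The plan is to construct a bounded linear operator $\widetilde{S\otimes T}\colon \XX_1\otimes_\lambda \LL_{\rho_1}\to \XX_2\otimes_\lambda \LL_{\rho_2}$ that lifts $R$, in the sense that the square
\[
\xymatrix{
\XX_1\otimes_\lambda \LL_{\rho_1} \ar[r]^{\widetilde{S\otimes T}} \ar[d]_{J_1} & \XX_2\otimes_\lambda \LL_{\rho_2} \ar[d]^{J_2}\\
\LL_{\rho_1}(\XX_1) \ar[r]_R & \LL_{\rho_2}(\XX_2)
}
\]
commutes, where $J_i:=J[\rho_i,\XX_i,\lambda]$. Once this is in place, the conclusion is immediate: for any $g\in\LL_{\rho_1}^\lambda(\XX_1)$, write $g=J_1(\tau)$ with $\tau\in \XX_1\otimes_\lambda \LL_{\rho_1}$; then $R(g)=J_2(\widetilde{S\otimes T}(\tau))\in J_2(\XX_2\otimes_\lambda \LL_{\rho_2})=\LL_{\rho_2}^\lambda(\XX_2)$, and taking the infimum over such $\tau$ we obtain $\|R(g)\|_{\LL_{\rho_2}^\lambda(\XX_2)}\le\|\widetilde{S\otimes T}\|\,\|g\|_{\LL_{\rho_1}^\lambda(\XX_1)}$, since the quotient topology on $\LL_{\rho_i}^\lambda(\XX_i)$ is exactly the one inherited from $J_i$.

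To produce $\widetilde{S\otimes T}$, I would consider the bilinear map $B\colon \XX_1\times \LL_{\rho_1}\to \XX_2\otimes_\lambda \LL_{\rho_2}$ defined on elementary pairs by $B(x,f)=S(x)\otimes T(f)$. The inequality $\|S(x)\otimes T(f)\|_{\XX_2\otimes_\lambda \LL_{\rho_2}}\le \|S(x)\|\,\|T(f)\|\le \|S\|\,\|T\|\,\|x\|\,\|f\|_{\rho_1}$ shows $B$ is bounded. By Proposition~\ref{prop:TP}~\ref{prop:TP:Galb}, $\GG(\LL_\lambda)\subseteq\GG(\XX_2\otimes_\lambda \LL_{\rho_2})$, so $\lambda$ galbs the target, and Proposition~\ref{prop:TP}~\ref{prop:TP:UP} yields the desired bounded linear map $\widetilde{S\otimes T}$ satisfying $\widetilde{S\otimes T}(x\otimes f)=S(x)\otimes T(f)$.

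Commutativity of the square is checked first on elementary tensors: $J_2\circ\widetilde{S\otimes T}(x\otimes f)=J_2(S(x)\otimes T(f))=S(x)\,T(f)=R(x\,f)=R\circ J_1(x\otimes f)$ using the hypothesis on $R$. Since $\XX_1\otimes \LL_{\rho_1}$ spans a dense subspace of $\XX_1\otimes_\lambda \LL_{\rho_1}$ and both $J_2\circ\widetilde{S\otimes T}$ and $R\circ J_1$ are continuous (here we use that $R$ itself is bounded), the identity extends to all of $\XX_1\otimes_\lambda \LL_{\rho_1}$ by continuity.

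If one were wary of invoking the universal property, the alternative I would fall back on is the series representation of Proposition~\ref{prop:TP}~\ref{prop:TP:Equiv}: given $g=J_1(\tau)\in\LL_{\rho_1}^\lambda(\XX_1)$, expand $\tau=\sum_j x_j\otimes f_j$ with $\lambda((\|x_j\|\,\|f_j\|_{\rho_1})_j)$ comparable to $\|\tau\|$, note that the image sequence satisfies $\lambda((\|S(x_j)\|\,\|T(f_j)\|_{\rho_2})_j)\le \|S\|\,\|T\|\,\lambda((\|x_j\|\,\|f_j\|_{\rho_1})_j)$, and then apply the same proposition in reverse to get convergence of $\sum_j S(x_j)\otimes T(f_j)$ in $\XX_2\otimes_\lambda \LL_{\rho_2}$; applying $J_2$ and using the hypothesis $R(xf)=S(x)T(f)$ together with the continuity of $R$ identifies this sum with $R(g)$. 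The expected main subtlety is the verification of commutativity of the square, i.e., going from elementary tensors to their closure without losing the identity, which requires us to rely on the boundedness of $R$ on all of $\LL_{\rho_1}(\XX_1)$ rather than just on its subspace $\LL_{\rho_1}^\lambda(\XX_1)$.
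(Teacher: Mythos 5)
Your overall strategy is exactly the paper's: lift $R$ to an operator $S\otimes_\lambda T$ at the tensor level, observe that the square commutes (checked on elementary tensors and extended by density, using the boundedness of $R$ on all of $\LL_{\rho_1}(\XX_1)$), conclude that $R$ maps the range of $J_1$ into the range of $J_2$, and get continuity from the quotient topologies. The one step that does not work as written is your construction of $\widetilde{S\otimes T}$: you invoke Proposition~\ref{prop:TP}~\ref{prop:TP:Galb} to get $\GG(\LL_\lambda)\subseteq\GG(\XX_2\otimes_\lambda\LL_{\rho_2})$ and then claim that ``$\lambda$ galbs the target.'' That inference requires $\LL_\lambda\subseteq\GG(\LL_\lambda)$, i.e., that $\lambda$ is self-galbed, which is not among the hypotheses of the lemma (a minimal symmetric $\lambda$ galbing $\XX_1,\XX_2$ need not galb itself; Kalton's theorem supplies non-locally-convex symmetric sequence spaces strictly larger than their galbs). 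The repair is immediate: Proposition~\ref{prop:TP}~\ref{prop:TP:CD} produces the bounded operator $S\otimes_\lambda T$ with $\lVert S\otimes_\lambda T\rVert\le\lVert S\rVert\,\lVert T\rVert$ by a direct estimate on the defining infimum, with no galbing hypothesis on the target; this is what the paper uses. Your fallback argument via Proposition~\ref{prop:TP}~\ref{prop:TP:Equiv} is also sound and amounts to the same estimate in series form. With either of these substitutions in place of the appeal to \ref{prop:TP:Galb} and \ref{prop:TP:UP}, your proof coincides with the paper's.
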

\begin{proof}Our assumptions yield a commutative diagram
\[\xymatrix{
\XX_1 \otimes_\lambda \LL_{\rho_1} \ar[d]_{J[\rho_1,\XX_1,\lambda]} \ar[rr]^{ S \otimes_\lambda T} && \XX_2 \otimes_\lambda \LL_{\rho_2} \ar[d]^{J[\rho_2,\XX_2,\lambda]} \\
\LL_{\rho_1}(\XX_1) \ar[rr]_{R} && \LL_{\rho_2}(\XX_2).
}\]
We infer that $R$ maps the range of the map $J[\rho_1,\XX_1,\lambda]$ into the range of the map $J[\rho_2,\XX_2,\lambda]$. That is, there is a linear map $R[\lambda]\colon \LL_{\rho_1}(\XX_1)\to \LL_{\rho_2}(\XX_2)$ such that the diagram
\[\xymatrix{
\XX_1 \otimes_\lambda \LL_{\rho_1} \ar[d]_{J[\rho_1,\XX_1,\lambda]} \ar[rr]^{ S \otimes_\lambda T} && \XX_2 \otimes_\lambda \LL_{\rho_2} \ar[d]^{J[\rho_2,\XX_2,\lambda]} \\
\LL_{\rho_1}^\lambda(\XX_1) \ar[rr]_{R[\lambda]} && \LL_{\rho_2}^\lambda(\XX_2)
}\]
commutes. Since both $\LL_{\rho_1}^\lambda(\XX_1)$ and $\LL_{\rho_2}^\lambda(\XX_2)$ are endowed with the quotient topology and $S \otimes_\lambda T$ is continuous, so is $R[\lambda]$.
\end{proof}

Let $\lambda$ be a $1$-concave symmetric function quasi-norm that galbs a quasi-Banach space $\XX$. Let $(\Omega,\Sigma,\mu)$ be a $\sigma$-finite measure space. If $\rho$ is the function quasi-norm defining $L_1(\mu)$, we denote $L_1^\lambda(\mu,\XX)=\LL_\rho^\lambda(\XX)$. Given $A\in\Sigma$, we set $L_1^\lambda(A,\mu,\XX)=L_1^\lambda(\mu|_A,\XX)$. The bounded linear operator
\[
I[\mu] \colon L_1(\mu)\to \FF, \quad f\mapsto \int_\Omega f\, d\mu
\]
yields a bounded linear operator
\[
I[\mu,\XX,\lambda]\colon \XX\otimes_\lambda L_1(\mu)\to \XX, \quad x\otimes f\mapsto x\int_\Omega f \, d\mu.
\]
\begin{definition}
Suppose that a $1$-concave symmetric function quasi-norm $\lambda$ galbs a quasi-Banach space $\XX$. We say that the pair $(\lambda,\XX)$ is \emph{amenable} if $I[\mu,\XX,\lambda](\tau)=0$ whenever $(\Omega,\Sigma,\mu)$ is a $\sigma$-finite measure and $\tau\in \XX\otimes_\lambda L_1(\mu)$ satisfies $J[L_1(\mu),\XX,\lambda](\tau)=0$.
\end{definition}
In other words, $(\lambda,\XX)$ is amenable if and only if for every $\sigma$-finite measure $\mu$ there is an operator
\[
\II[\mu,\XX,\lambda]\colon L_1^\lambda(\mu,\XX)\to \XX
\]
such that the diagram
\[\xymatrix{
\XX \otimes_\lambda \LL_1(\mu) \ar[d]_{J[L_1(\mu),\XX,\lambda]} \ar[drr]^{I[\mu,\XX,\lambda]}& &\\
L_1^\lambda(\mu,\XX) \ar[rr]_{\II[\mu,\XX,\lambda]}& &\XX
}\]
commutes. The bounded linear operator $\II[\mu,\XX,\lambda]$ satisfies
\[
\II[\mu,\XX,\lambda] (x\, f)= x\int_\Omega f\, d\mu, \quad x\in\XX, \ f\in L_1(\mu).
\]
So, we must regard it as `integral' for functions in $L_1^\lambda(\mu,\XX)$. Loosely speaking, that $(\lambda,\XX)$ is amenable means that there is an integral for functions in $L_1^\lambda(\mu,\XX)$.
\begin{definition}
Let $\XX$ be a quasi-Banach space. We say that a net $(T_i)_{i\in I}$ in $\LT(\XX)$ is a \emph{bounded approximation of the identity} if $\sup_{i} \Vert T_i\Vert <\infty$ and $\lim_i T_i(x)=x$ for all $x\in\XX$. We say that $\XX$ has the \emph{BAP} if it has a bounded approximation of the identity consisting of finite-rank operators.
\end{definition}

Note that if a net $(T_i)_{i\in I}$ in $\LT(\XX)$ is uniformly bounded then the set $\{x \in\XX \colon \lim_i T_i(x)=x\}$ is closed. This yields the following elementary result.
\begin{lemma}\label{lem:PiP}
Let $\XX$ be a quasi-Banach space. Let $(P_i)_{i\in I}$ be a net consisting of uniformly bounded projections with $P_j\circ P_i=P_i$ if $i\le j$ and $\cup_{i\in I}P_i(\XX)$ is dense in $\XX$. Then $(P_i)_{i\in I}$ is a bounded approximation of the identity.
\end{lemma}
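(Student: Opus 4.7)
The plan is to combine the observation immediately preceding the lemma with the compatibility condition $P_j\circ P_i=P_i$ to pin down the set on which $(P_i)_{i\in I}$ converges pointwise to the identity. Set
\[
D=\{x\in\XX \colon \lim_{i\in I} P_i(x)=x\}.
\]
By the observation stated just before the lemma (uniform boundedness of $(P_i)_{i\in I}$ implies that $D$ is closed in $\XX$), and since $\bigcup_{i\in I} P_i(\XX)$ is dense in $\XX$, it will suffice to show that every range $P_{i_0}(\XX)$ is contained in $D$.

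So fix $i_0\in I$ and let $x\in P_{i_0}(\XX)$, say $x=P_{i_0}(y)$ for some $y\in\XX$. For any $j\in I$ with $j\ge i_0$ the compatibility hypothesis gives
\[
P_j(x)=P_j(P_{i_0}(y))=P_{i_0}(y)=x.
\]
Hence $P_j(x)=x$ eventually in the directed set $I$, which means $\lim_{j\in I}P_j(x)=x$, i.e.\ $x\in D$. Therefore $\bigcup_{i\in I}P_i(\XX)\subseteq D$.

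Since $D$ is closed and contains a dense subset of $\XX$, we conclude $D=\XX$. Combined with the standing hypothesis $\sup_{i\in I}\Vert P_i\Vert<\infty$, this is exactly the definition of a bounded approximation of the identity, finishing the proof. There is no real obstacle here: the argument is a direct application of the two stated hypotheses, and the only subtlety worth noting is that the uniform boundedness is needed twice—once to guarantee that $D$ is closed and once to ensure the approximation property is genuinely \emph{bounded}.
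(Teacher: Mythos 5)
Your proof is correct and follows exactly the route the paper intends: the remark preceding the lemma supplies the closedness of the set $D=\{x\in\XX\colon\lim_i P_i(x)=x\}$, the compatibility condition $P_j\circ P_i=P_i$ for $i\le j$ shows each range $P_{i_0}(\XX)$ lies in $D$, and density finishes the argument. Nothing is missing.
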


If $\rho$ satisfies \ref{FQN:LC}, then for every $A\in\Sigma(\mu)$ we have a bounded linear map
\[
S[A,\rho]\colon L_\rho\to L_1(A,\mu), \quad f\mapsto f|_A.
\]
\begin{theorem}\label{thm:one-to-one}
Let $\lambda$ be a $1$-concave symmetric function quasi-norm, let $\rho$ be a leveling function quasi-norm with the weak Fatou property over a $\sigma$-finite measure space $(\Omega,\Sigma,\mu)$, and let $\XX$ be a quasi-Banach space. Suppose that $(\lambda,\XX)$ is amenable. Then $J[\rho,\XX,\lambda]$ is one-to-one.
\end{theorem}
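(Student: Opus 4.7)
Fix $\tau \in \XX\otimes_\lambda \LL_\rho$ with $J[\rho,\XX,\lambda](\tau)=0$; the goal is to deduce $\tau=0$. The plan is to approximate $\tau$ by images of tensor-product conditional expectations $P_{\Sigma_0} := \Id_\XX \otimes_\lambda \EE[\rho,\Sigma_0]$, where $\Sigma_0$ ranges over finite sub-$\sigma$-algebras with atoms in $\Sigma(\mu)$, showing first that each $P_{\Sigma_0}(\tau)$ vanishes by reduction to the amenable case, and then that $P_{\Sigma_k}(\tau)\to \tau$ along a suitable increasing net.

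First I would show that for every $A\in\Sigma(\mu)$ the vector
\[
\Phi_A(\tau):=I[\mu|_A,\XX,\lambda]\circ(\Id_\XX\otimes_\lambda S[A,\rho])(\tau) \in \XX
\]
is zero. Since $\rho$ is leveling, Lemma~\ref{lem:LevInt} grants \ref{FQN:LC}, so $S[A,\rho]\colon \LL_\rho\to L_1(A,\mu)$ is a bounded restriction map, and Proposition~\ref{prop:TP}~(iv) lifts it to the tensor products. Applying Lemma~\ref{lem:TO} with restriction-to-$A$ on both the scalar and vector-valued sides yields a commutative square identifying $J[L_1(A,\mu),\XX,\lambda]\circ(\Id\otimes_\lambda S[A,\rho])(\tau)$ with the restriction $J[\rho,\XX,\lambda](\tau)|_A=0$ in $L_1(A,\mu,\XX)$. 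Amenability of $(\lambda,\XX)$ applied to $\mu|_A$ then forces $\Phi_A(\tau)=0$.

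Next, for a finite $\Sigma_0$ with atoms $A_1,\dots,A_n$ of positive finite $\mu$-measure, leveling makes $T_0 := \EE[\rho,\Sigma_0]$ bounded on $\LL_\rho$ with a bound independent of $\Sigma_0$, so $P_{\Sigma_0}=\Id_\XX\otimes_\lambda T_0$ is a uniformly bounded projection onto $\XX\otimes_\lambda \LL_\rho(\Sigma_0)$; by Proposition~\ref{prop:TP}~(viii) this range is isomorphic to $\XX^n$ via the basis $(\chi_{A_i})_{i=1}^{n}$. Since $T_0(f)=\sum_{i=1}^n \mu(A_i)^{-1}(\int_{A_i} f\,d\mu)\chi_{A_i}$, passing a continuous coordinate projection through the (continuous) map $\Id\otimes_\lambda T_0$ on a $\lambda$-summable representation $\tau=\sum_j x_j\otimes f_j$ (Proposition~\ref{prop:TP}~(vii)) shows that the $i$th coordinate of $P_{\Sigma_0}(\tau)$ is exactly $\mu(A_i)^{-1}\Phi_{A_i}(\tau)=0$; hence $P_{\Sigma_0}(\tau)=0$ for every such $\Sigma_0$.

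To conclude, I would argue $P_{\Sigma_k}(\tau)\to\tau$ along an increasing net $(\Sigma_k)$ of such finite sub-$\sigma$-algebras with $\bigvee_k \Sigma_k=\Sigma$. The projections are uniformly bounded and satisfy $P_{\Sigma_j}\circ P_{\Sigma_i}=P_{\Sigma_i}$ for $i\le j$ by the tower property, so Lemma~\ref{lem:PiP} reduces the task to the density of $\bigcup_k \XX\otimes_\lambda \LL_\rho(\Sigma_k)=\XX\otimes \Sp(\mu)$ in $\XX\otimes_\lambda \LL_\rho$. This density assertion is the main obstacle: in general $\Sp(\mu)$ need not be dense in $\LL_\rho$ (only in $\LL_\rho^b$). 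To overcome it I plan to combine the weak Fatou property (hence the rough Fatou property, by Proposition~\ref{prop:roughweakFatou}) with the classical a.e.\ martingale convergence $\EE(f,\Sigma_k)\to f$, available for $f$ locally integrable by leveling, to upgrade $T_k\to\Id$ to strong convergence on $\LL_\rho$; transferring this through a $\lambda$-summable representation of $\tau$ given by Proposition~\ref{prop:TP}~(vii), together with the uniform bound on $P_{\Sigma_k}$, then yields $P_{\Sigma_k}(\tau)\to\tau$ and hence $\tau=0$.
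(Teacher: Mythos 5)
Your first two steps reproduce the paper's argument faithfully: the commutative square supplied by Lemma~\ref{lem:TO} together with amenability of $(\lambda,\XX)$ gives $\Phi_A(\tau)=0$ for every $A\in\Sigma(\mu)$, and your identification of the $i$th coordinate of $P_{\Sigma_0}(\tau)$ with $\mu(A_i)^{-1}\Phi_{A_i}(\tau)$ is exactly the paper's factorization $\Id_\XX\otimes_\lambda\EE[\rho,\Sigma_0]=S\circ\bigl(I[\mu|_{A_j},\XX,\lambda]\circ(\Id_\XX\otimes_\lambda S[A_j,\rho])\bigr)_{j=1}^n$ through $J[\rho,\XX,\lambda]$. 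You have also put your finger on the delicate point of the last step: Lemma~\ref{lem:PiP} requires the density of $\bigcup_k\XX\otimes_\lambda\LL_\rho(\Sigma_k)$, essentially $\XX\otimes\Sp(\mu)$ (the paper invokes Lemma~\ref{lem:PiP} here without further comment, after first treating $\mu(\Omega)<\infty$ and then passing to general $\sigma$-finite $\mu$ via the restriction projections $R[A_n]$ --- a two-stage reduction you should keep, since a finite sub-$\sigma$-algebra all of whose atoms lie in $\Sigma(\mu)$ forces $\mu(\Omega)<\infty$).

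The repair you propose for that last step does not work. Almost-everywhere martingale convergence combined with the rough Fatou property cannot be upgraded to $\rho(\lvert f-\EE(f,\Sigma_k)\rvert)\to0$: the Fatou-type properties bound the quasi-norm of a limit from above by the limit of the quasi-norms, which is the wrong direction; what you need is a dominated-convergence statement, i.e.\ domination/absolute continuity in the sense of Proposition~\ref{prop:TCD}, and that is not among the hypotheses. Indeed, the strong convergence $T_k\to\Id$ on all of $\LL_\rho$ that you claim is false in general under the stated assumptions: each $\EE(f,\Sigma_k)$ is an integrable simple function, so $\rho(\lvert f-\EE(f,\Sigma_k)\rvert)\ge\operatorname{dist}_{\LL_\rho}(f,\Sp(\mu))>0$ for every $f\in\LL_\rho\setminus\LL_\rho^b$, and leveling plus the weak Fatou property do not force $\LL_\rho=\LL_\rho^b$ (the exponential Orlicz norm over $[0,1]$ is a rearrangement-invariant function norm with the Fatou property satisfying \ref{FQN:LC}, hence leveling, yet integrable simple functions are not dense in it). As written, your argument yields $P_{\Sigma_k}(\tau)\to\tau$ only for $\tau$ in the closure of $\XX\otimes\Sp(\mu)$; to close the gap you would need either an additional minimality-type hypothesis on $\rho$, or an argument showing $\ker J[\rho,\XX,\lambda]\cap\bigcap_k\ker P_{\Sigma_k}=\{0\}$ that does not pass through strong convergence of the whole net on $\XX\otimes_\lambda\LL_\rho$.
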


\begin{proof}
Let $A\in\Sigma(\mu)$. By Lemma~\ref{lem:LevInt}, $\rho$ satisfies \ref{FQN:LC}. Therefore, for each quasi-Banach space $\YY$ there is a bounded linear operator
\[
S[A,\rho,\YY]\colon \LL_\rho(\YY)\to L_1(A,\mu,\YY), \quad f\mapsto f|_A.
\]
Set $S[A,\rho,\FF]=S[A,\rho]$. It is routine to check that the diagram
\[\xymatrix{
\XX\otimes_\lambda \LL_\rho \ar[rr]^-{\Id_\XX\otimes_\lambda S[A,\rho]} \ar[d]_{J[\rho,\XX,\lambda]} &&\XX\otimes_\lambda \LL_1(A,\mu) \ar[d]^{J[L_1(A,\mu),\XX,\lambda]} \\
\LL_\rho(\XX) \ar[rr]_-{S[A,\rho,\XX]} &&L_1(A,\mu,\XX)
}\]
commutes. Using that $(\lambda,\XX)$ is amenable we obtain the commutative diagram
\begin{equation}\label{key:diagram}
\xymatrix{
\XX\otimes_\lambda \LL_\rho \ar[rr]^-{\Id_\XX\otimes_\lambda S[A,\rho]} \ar[d]_{J[\rho,\XX,\lambda]} &&\XX\otimes_\lambda \LL_1(A,\mu) \ar[d]_{J[L_1(A,\mu),\XX,\lambda]} \ar[drr]^-{I[\mu|_A,\XX,\lambda]}&&\\
\LL_\rho^\lambda(\XX) \ar[rr]_-{S[A,\rho,\XX]} &&L_1^\lambda(A,\mu,\XX) \ar[rr]_-{\II[\mu|_A,\XX,\lambda]}&&\XX
}
\end{equation}

Suppose that $\mu(\Omega)<\infty$. Let $\Sigma_0$ be a finite sub-$\sigma$-algebra. If $\Sigma_0$ is generated by the partition $(A_j)_{j=1}^n$ of $\Omega$ consisting of nonzero measure sets, then  
\[
\EE(\rho,\Sigma_0)= \sum_{j=1}^n \frac{\chi_{A_j}}{\mu(A_j)} I[\mu|_{A_j}] \circ S[A_j,\rho].
\]
By Proposition~\ref{prop:TP}~\ref{prop:TP:Y=Fn}, there is an isomorphism $S\colon \XX^n \to \XX\otimes_\lambda \LL_\rho(\Sigma_0)$ such that
\[
S((x_j)_{j=1}^n) =\sum_{j=1}^n x_j \otimes \frac{\chi_{A_j}}{\mu(A_j)}, \quad x_j\in\XX.
\]
Therefore,
\[
\Id_\XX\otimes_\lambda \EE(\rho,\Sigma_0)= S\circ (I[\mu|_{A_j},\XX,\lambda] \circ (\Id_\XX\otimes S[A_j,\rho]))_{j=1}^n.
\]
Combining this identity with the commutative diagrams \eqref{key:diagram} associated with each set $A_j$ yields a bounded linear map $R\colon \LL_\rho^\lambda(\XX)\to \XX \otimes_\lambda \LL_\rho(\Sigma_0)$ such that the diagram
\[\xymatrix{
\XX \otimes_\lambda \LL_\rho \ar[d]_{J[\rho,\XX,\lambda]} \ar[rrd]^-{\Id_\XX\otimes_\lambda \EE(\rho,\Sigma_0)} && \\
\LL_\rho^\lambda(\XX) \ar[rr]_-{R} && \XX \otimes_\lambda \LL_\rho(\Sigma_0) \\
}\]
commutes. The operators $\Id_\XX\otimes_\lambda \EE(\rho,\Sigma_0)$ are uniformly bounded projections. Let $(\Sigma_i)_{i\in I}$ a non-decreasing net of finite $\sigma$-algebras whose union generates $\Sigma$. By Lemma~\ref{lem:PiP}, $(\Id_\XX\otimes_\lambda \EE(\rho,\Sigma_i))_{i\in I}$ is a bounded approximation of the identity. We infer that $J[\rho,\XX,\lambda]$ is one-to-one, as wanted, in the particular case that $\mu(\Omega)<\infty$. 

In general, let $R[A,\XX]\colon \LL_\rho(\XX) \to \LL_\rho(A,\XX)$ be the canonical projection on a set $A\in\Sigma(\mu)$. Set $R[A]=R[A,\FF]$. Since $R[A,\XX]$ is bounded, applying Lemma~\ref{lem:TO} yields a bounded linear operator $R[A,\XX,\lambda]$ such that the diagram
\[\xymatrix{
\XX \otimes_\lambda \LL_\rho \ar[d]_{J[\rho,\XX,\lambda]} \ar[rr]^{\Id_\XX\otimes_\lambda R[A]} && \XX \otimes_\lambda \LL_\rho(A) \ar[d]^{J[\rho|_A,\XX,\lambda]} \\
\LL_\rho^\lambda(\XX) \ar[rr]_{R[A,\XX,\lambda]} && \LL_\rho^\lambda(A,\XX)
}\]
commutes. Let $(A_n)_{n=1}^\infty$ be a non-decreasing sequence in $\Sigma(\mu)$ whose union is $\Omega$. By Lemma~\ref{lem:PiP}, $(\Id_\XX\otimes_\lambda R[A_n])_{n=1}^\infty$ is a bounded approximation of the identity. Since $J[\rho|_{A_n},\XX,\lambda]$ is one-to-one (by the previous particular case), it follows that $J[\rho,\XX,\lambda]$ is one-to-one.
\end{proof}

Notice that the applicability of Theorem~\ref{thm:one-to-one} depends on the existence of amenable pairs. In the optimal situation, we would be able to choose $\lambda$ to be the smallest symmetric function quasi-norm which galbs the quasi-Banach space $\XX$. Thus, the following question arises.

\begin{question}\label{qt:ame}
Let $\XX$ be a quasi-Banach space. Is $(\lambda_\XX,\XX)$ amenable?
\end{question}

As long as there is no general answer to Question~\ref{qt:ame}, we will focus on the spaces of galbs that have appeared in the literature. We next prove that for all of them Question~\ref{qt:ame} has a positive answer.

\begin{theorem}\label{thm:Orlicz}
Let $\varphi$ be a concave Orlicz function fulfilling \eqref{eq:OrliczLC}. Suppose that $\lambda_\varphi$ galbs a quasi-Banach space $\XX$. Then $(\lambda_\varphi,\XX)$ is amenable.
\end{theorem}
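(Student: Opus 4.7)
The strategy is to prove that $J := J[L_1(\mu),\XX,\lambda_\varphi]$ is one-to-one for every $\sigma$-finite $\mu$; once this is established, amenability is automatic, since then $\mathcal{I} := I \circ J^{-1}$ is a continuous factorization on the quotient $J(\XX \otimes_{\lambda_\varphi} L_1(\mu))$ with its natural topology. I would adapt the conditional-expectation machinery used in the proof of Theorem~\ref{thm:one-to-one}. First I reduce to a finite measure via an exhausting sequence $\Omega = \bigcup A_n$ with $\mu(A_n) < \infty$, together with the restriction maps $R[A_n,\XX,\lambda_\varphi]$ of Lemma~\ref{lem:TO} and the approximation-of-identity argument from Lemma~\ref{lem:PiP}.

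Assume now $\mu(\Omega) < \infty$, and let $(\Sigma_n)$ be an increasing sequence of finite sub-$\sigma$-algebras whose union generates $\Sigma$. The conditional expectations $E_n \colon L_1(\mu) \to L_1(\Sigma_n,\mu)$ are contractive, so $\mathcal{E}_n := \Id_\XX \otimes_{\lambda_\varphi} E_n$ are uniformly-bounded projections on $\XX \otimes_{\lambda_\varphi} L_1(\mu)$, and by Lemma~\ref{lem:PiP} they form a bounded approximation of the identity. Proposition~\ref{prop:TP}\ref{prop:TP:Y=Fn} identifies $\XX \otimes_{\lambda_\varphi} L_1(\Sigma_n,\mu) \cong \XX^{m_n}$, so $J$ restricted to this finite-dimensional subspace is injective, and $\mathcal{E}_n(\tau) = \sum_{j=1}^{m_n} x_j^n \otimes \chi_{A_j^n}/\mu(A_j^n)$ with $x_j^n = I_{A_j^n}(\tau) \in \XX$ uniquely determined. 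If $J(\tau) = 0$, continuity of $J$ and of $\mathcal{E}_n$ forces $J(\mathcal{E}_n(\tau)) \to 0$ in $L_1(\mu,\XX)$, i.e., $\sum_j \|x_j^n\|_\XX = \|J(\mathcal{E}_n(\tau))\|_{L_1(\mu,\XX)} \to 0$.

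The crux is to upgrade this to $\|\mathcal{E}_n(\tau)\|_{\XX \otimes_{\lambda_\varphi} L_1(\mu)} \to 0$, which together with $\mathcal{E}_n(\tau) \to \tau$ yields $\tau = 0$ and hence injectivity of $J$. Here two features specific to concave $\varphi$ intervene. First, by Proposition~\ref{exampleofRMI} the function quasi-norm $\lambda_\varphi$ is lattice $1$-concave; since $L_1$ is lattice $1$-convex, Theorem~\ref{thm:MIIpconvex} (with $p=1$) provides the Minkowski integral inequality
\[
\lambda_\varphi\!\Big(\big(\textstyle\int_\Omega h(n,\omega)\,d\mu(\omega)\big)_n\Big) \le C \int_\Omega \lambda_\varphi\big((h(n,\omega))_n\big)\,d\mu.
\]
Second, the galbing hypothesis yields the pointwise bound $\|\sum_k y_k g_k(\omega)\|_\XX \le C' \lambda_\varphi((\|y_k\|\,|g_k(\omega)|)_k)$ for any representing series $\tau = \sum_k y_k \otimes g_k$. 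Choosing such a representation via Proposition~\ref{prop:TP}\ref{prop:TP:Equiv}, passing to a subsequence along which the algebraic partial sums converge pointwise a.e.\ to $J(\tau) = 0$ (Proposition~\ref{prop21}), and combining the two inequalities above should produce the control
\[
\|\mathcal{E}_n(\tau)\|_{\XX \otimes_{\lambda_\varphi} L_1(\mu)} \le C'' \|J(\mathcal{E}_n(\tau))\|_{L_1(\mu,\XX)},
\]
closing the argument.

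\textbf{Main obstacle.} The delicate step is precisely this last estimate. The naive bound $\|\mathcal{E}_n(\tau)\| \le \lambda_\varphi((\|x_j^n\|)_j)$ is, outside the locally convex regime, strictly larger than $\sum_j \|x_j^n\|$, so the $\ell_1$-decay of the coordinates $(x_j^n)_j$ does not translate directly into $\lambda_\varphi$-decay of the tensor. Overcoming this asymmetry requires a representation of $\mathcal{E}_n(\tau)$ more refined than the canonical atomic decomposition, using the Minkowski inequality and the galbing hypothesis simultaneously. This is where concavity of $\varphi$ is essential: it supplies the Minkowski inequality in the right direction to convert the pointwise vanishing of $J(\tau)$ into genuine tensor-norm decay of the conditional-expectation approximations.
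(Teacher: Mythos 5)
Your reduction steps are sound, and the target you set yourself---injectivity of $J[L_1(\mu),\XX,\lambda_\varphi]$---is indeed sufficient for amenability (and, by Theorem~\ref{thm:one-to-one}, also necessary). The problem is the step you yourself flag as the crux: the inequality $\lVert\mathcal{E}_n(\tau)\rVert_{\XX\otimes_{\lambda_\varphi}L_1(\mu)}\le C''\lVert J(\mathcal{E}_n(\tau))\rVert_{L_1(\mu,\XX)}$ is false as a uniform estimate on the ranges of the conditional expectations, for every non-locally-convex $\XX$ to which the theorem applies. Indeed, an element of $\XX\otimes_{\lambda_\varphi}L_1(\Sigma_0,\mu)$ has the form $\sigma=\sum_{j}x_j\otimes\chi_{A_j}$ with $(A_j)_j$ the atoms of $\Sigma_0$, and $\lVert J(\sigma)\rVert_{L_1(\mu,\XX)}=\sum_j\mu(A_j)\lVert x_j\rVert$. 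If your inequality held with a constant independent of $\Sigma_0$, then $\lVert I[\mu,\XX,\lambda_\varphi](\sigma)\rVert=\lVert\sum_j\mu(A_j)x_j\rVert\le C\lVert\sigma\rVert_{\XX\otimes_{\lambda_\varphi}L_1(\mu)}\le CC''\sum_j\mu(A_j)\lVert x_j\rVert$, i.e.\ the integral of simple functions would be continuous for the $L_1(\mu,\XX)$ quasi-norm; the Rolewicz example recalled at the very beginning of the paper (simple functions $s_n=\sum_m\chi_{A_{m,n}}x_{m,n}$ with $\sup_m\lVert x_{m,n}\rVert\to0$ but $\sum_m\mu(A_{m,n})x_{m,n}\not\to0$) shows this fails whenever $\XX$ is not locally convex. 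So the estimate could at most hold on the set $\{\mathcal{E}_n(\tau)\colon\tau\in\ker J\}$, and for that restricted claim---which is essentially the theorem itself---you offer no argument: the Minkowski inequality and the galbing bound you cite do not interact with the hypothesis $J(\tau)=0$ in any way you make precise. (A minor further point: for a general finite measure space $\Sigma$ need not be countably generated, so one should use a net of finite sub-$\sigma$-algebras, as in the proof of Theorem~\ref{thm:one-to-one}.)

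The paper closes this gap by a quite different device, and it is worth recording what the missing idea is. One argues by contradiction with a representation $\tau=\sum_ja_jx_j\otimes f_j\in\ker J$, $I(\tau)=x\ne0$, reduced to $\mu(\Omega)<\infty$ and $f_j$ simple with $\sum_j\varphi(a_j)<1$. After arranging (Egorov plus Proposition~\ref{prop21}) that $F_m=\sum_{j>m}\varphi(a_j)\lvert f_j\rvert\to0$ uniformly and that the partial sums $G_n=\sum_{j\le J_n}a_jx_jf_j$ tend to $0$ pointwise, one replaces $f_j$ by $g_j=f_j-\sum_hf_j(\omega_h)\chi_{A_h}$, where $(A_h)_h$ generates a finite $\sigma$-algebra making $f_1,\dots,f_m$ measurable and $\omega_h\in A_h$ are chosen points. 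This annihilates the first $m$ terms and, since $\sum_h\mu(A_h)G_n(\omega_h)\to0$, expresses $x$ as a limit of tails $\sum_{j=m+1}^{J_n}a_jx_j\int_\Omega g_j\,d\mu$, whence $\lVert x\rVert\le\kappa C\,\lambda_\varphi\bigl((a_j\lvert\int_\Omega g_j\,d\mu\rvert)_{j>m}\bigr)$ by the galbing hypothesis. The concavity of $\varphi$ then enters through the elementary inequality $\varphi(bt)\le\max\{1,b\}\varphi(t)$, which converts the uniform smallness of $F_m$ at the points $\omega_h$ into $\lambda_\varphi\bigl((a_j\lvert\int_\Omega g_j\,d\mu\rvert)_{j>m}\bigr)\le\varepsilon$. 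It is this point-evaluation discretization together with that pointwise use of concavity---not lattice $1$-concavity via Minkowski's integral inequality---that makes the proof work; nothing in your plan supplies a substitute.
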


\begin{proof}
Assume that $\varphi(1)=1$. Assume by contradiction that there is a $\sigma$-finite measure space $(\Omega,\Sigma,\mu)$, a positive sequence $\alpha=(a_j)_{j=1}^\infty$ in $\ell_\varphi$, a sequence $(f_j)_{j=1}^\infty$ in the unit ball of $L_1(\mu)$, and a sequence $(x_j)_{j=1}^\infty$ in the unit ball of $\XX$ such that $\sum_{j=1}^\infty a_j \, x_j\, f_j=0$ in $\LL_\varphi(\XX)$ and
\[
x:=\sum_{j=1}^\infty a_j \, x_j \int_\Omega f_j\, d\mu\not=0.
\]
The following claim will be used a couple of times.

\noindent\textbf{Claim.} If $(\Omega_k)_{k=1}^\infty$ is a non-decreasing sequence in $\Sigma(\mu)$ such that $\Omega\setminus\cup_{k=1}^\infty \Omega_k$ is a null set, then $\sum_{j=1}^\infty a_j \, x_j \int_{\Omega_k} f_j\, d\mu\not=0$ for some $k\in\NN$. 

\noindent\emph{Proof of the claim.} Since $\lim_k \int_{\Omega_k} f_j\, d\mu=\int_\Omega f_j\, d\mu$ for all $j\in\NN$ and $\lambda_\varphi$ is dominating, we have
\[
\lim_k \left\Vert \left(a_j \int_\Omega f_j\,d\mu\right)_{j=1}^\infty-\left(a_j \int_{\Omega_k} f_j\,d\mu\right)_{j=1}^\infty\right\Vert_\varphi=0.
\]
Since $\ell_\varphi$ embeds continuously in $\GG_b(\XX)$,
\[
\lim_k \left\Vert\sum_{j=1}^\infty a_j \, x_j \int_{\Omega} f_j\,d\mu- \sum_{j=1}^\infty a_j \, x_j \int_{\Omega_k} f_j \,d\mu \right\Vert=0.
\]
This limit readily gives our claim.

The claim allow us assume that $\mu(\Omega)<\infty$. By Proposition~\ref{prop:TP}~\ref{prop:TP:Equiv}, we can assume that $f_j\in\Sp(\mu)$ for all $j\in\NN$. Also, we can assume without lost of generality that $\lambda_\varphi(\alpha)<1$, so that $\sum_{j=1}^\infty \varphi(a_j)<1$. Set
\[
F_m=\sum_{j=m+1}^\infty \varphi(a_j) \lvert f_j\rvert, \quad m\in\NN\cup\{0\}.
\]
We have $\int_\Omega F_0\, d\mu<\infty$. Therefore, $F_0<\infty$ a.e. By Severini–Egorov theorem, $\lim_m F_m=0$ quasi-uniformly. By Proposition~\ref{prop21}, there is an increasing sequence $(J_n)_{n=1}^\infty$ such that, if
\[
G_n=\sum_{j=1}^{J_n} a_j \, x_j\, f_j, \quad n\in\NN,
\]
then $\lim_n G_n=0$ a.e. Taking into account the claim, we can assume without lost of generality that $\lim_m F_m=0$ uniformly and that $\lim_n G_n=0$ pointwise.

Pick $0<\varepsilon<1$. 
There is $m_0\in\NN$ such that $\lambda_\varphi ((a_j)_{m_0+1}^\infty )<\varepsilon$, i.e.,
\[
A:=\sum_{j=m_0+1}^\infty \varphi\left( \frac{a_j}{\varepsilon} \right)<1.
\]
Let $m\ge m_0$ be such that
\[
F_m(\omega)\le \frac{\varepsilon(1-A)}{\mu(\Omega)}, \quad \omega\in\Omega.
\]
Let $\Sigma_0$ be a finite $\sigma$-algebra such that $f_j$ is $\Sigma_0$-measurable for all $1\le j \le m$. Let $(A_h)_{h=1}^H$ be a partition of $\Omega$ which generates $\Sigma_0$. Pick points $\omega_h\in A_h$ for each $1\le h \le H$, and set
\[
g_j=f_j-\sum_{h=1}^H f_j(\omega_h) \chi_{A_h},
\quad j\in\NN.
\]
Since $g_j=0$ for all $1\le j \le m$ we have
\begin{align*}
x&=\lim_n\sum_{j=1}^{J_n} a_j x_j \int_\Omega f_j\, d\mu -\sum_{h=1}^H \mu(A_h)\lim_n G_n(\omega_h)\\
&=\lim_n\sum_{j=1}^{J_n} a_j x_j \int_\Omega g_j \, d\mu
=\lim_n\sum_{j=m+1}^{J_n} a_j x_j \int_\Omega g_j \, d\mu.
\end{align*}
Notice that
\[
\left\lVert 
\sum_{j=m+1}^{J_n} a_j x_j \int_\Omega g_j \, d\mu
\right\rVert
\le \lambda_\XX((a_jb_j)_{j=m+1}^{\infty}), 
\]
where $b_j=\lvert \int_\Omega g_j \, d\mu\rvert$. 
Recall that if a sequence $(u_n)_{n=1}^{\infty}$ converges to $x$ in $\XX$, 
then $\lVert x\rVert\le\kappa\liminf\lVert u_n\rVert$, 
where $\kappa$ is the modulus of concavity of $\XX$.
Therefore, since $\lambda_\varphi$ galbs $\XX$, we have 
\[
\lVert x\rVert\le\kappa\lambda_\XX((a_jb_j)_{j=m+1}^{\infty}) 
\le\kappa C\lambda_\varphi((a_jb_j)_{j=m+1}^{\infty}), 
\]
for some constant $C>0$. 
Now let us see that $\lambda_\varphi ((a_j b_j)_{j=m+1}^\infty)\le\varepsilon$. 
Using the concavity of $\varphi$ and that $\varepsilon<1$, we have
\begin{align*}
\sum_{j=m+1}^\infty \varphi\left(\frac{a_jb_j}{\varepsilon}\right)
&\le \sum_{j=m+1}^\infty \max\{1,b_j\} \varphi\left(\frac{a_j}{\varepsilon}\right)\\
&\le \sum_{j=m+1}^\infty \left( 1 +\sum_{h=1}^H |f_j(\omega_h)| \mu(A_h) \right) \varphi\left(\frac{a_j}{\varepsilon}\right)\\
&\le \sum_{j=m+1}^\infty \varphi\left(\frac{a_j}{\varepsilon}\right) +\sum_{h=1}^H \sum_{j=m+1}^\infty \frac{1}{\varepsilon} \mu(A_h) |f_j(\omega_h)| \varphi(a_j)\\
&= \sum_{j=m+1}^\infty \varphi\left(\frac{a_j}{\varepsilon}\right) +\frac{1}{\varepsilon}\sum_{h=1}^H \mu(A_h) F_m(\omega_h)\\
&\le A +\frac{1}{\varepsilon}\sum_{h=1}^H \mu(A_h) \frac{\varepsilon(1-A)}{\mu(\Omega)}=1.
\end{align*}
Therefore $\lVert x\rVert\le kC\varepsilon$. 
Letting $\varepsilon$ tend to $0$ we arise to absurdity.
\end{proof}

Given a quasi-Banach space $\XX$, a $\sigma$-finite measure space $(\Omega,\Sigma,\mu)$, a symmetric function quasi-norm $\lambda$ such that $(\lambda,\XX)$ is amenable, and a function $f\colon\Omega\to\XX$, we say that $f$ is $\lambda$-\emph{integrable} if $f\in L_1^{\lambda}(\mu,\XX)$, and we write
\[
\int_\Omega^\lambda f\, d\mu =\II[\mu,\XX,\lambda](f).
\]
A natural question is whether $\int_\Omega^\lambda f\, d\mu =\II[\mu,\XX,\lambda](f)$ really depends on $\lambda$. That is, do we have $\II[\mu,\XX,\lambda_1](f)=\II[\mu,\XX,\lambda_2](f)$ whenever
$(\lambda_1,\XX)$ and $(\lambda_2,\XX)$ are amenable pairs? This question is equivalent to the following one. Given function quasi-norms $\rho_1$ and $\rho_2$ over the same $\sigma$-finite measure space $(\Omega,\Sigma,\mu)$ we define a function quasi-norm $\rho_1\cap\rho_2$ by
\[
(\rho_1\cap\rho_2)(f)=\inf \{ \rho_1(g)+\rho_2(h) \colon g,h\in L_0^+(\mu), \ f=g+h\},
\]
for each $f\in L_0^+(\mu)$. 
It can be proved that if $\rho_1$ and $\rho_2$ are $p$-concave (resp.\ $p$-convex), $0<p<\infty$, then $\rho_1\cap\rho_2$ is $p$-concave (resp.\ $p$-convex).
\begin{question}\label{qt:amecup}
Let $\XX$ be a quasi-Banach space, and let $\lambda_1$ and $\lambda_2$ be symmetric function quasi-norms such that $(\lambda_1,\XX)$ and $(\lambda_2,\XX)$ are amenable. Is $(\lambda_1\cap\lambda_2,\XX)$ amenable?
\end{question}

Of course, a positive answer to Question~\ref{qt:ame} would yield a positive answer to Question~\ref{qt:amecup}.

\section{The fundamental theorem of calculus}\label{sec:ftc}
\noindent
Let $\XX$ be a quasi-Banach space and let $\lambda$ be a symmetric function quasi-norm such that $(\lambda,\XX)$ is amenable.
If $d\in\NN$, $A\subseteq\RR^d$ is measurable, and $\mu$ is the Lebesgue measure on $A$, we set $L_1^{\lambda}(A,\XX)= L_1^{\lambda}(\mu,\XX)$ and, for $f\in L_1^{\lambda}(A,\XX)$,
$\int_A^\lambda f(x)\, dx = \int_A^\lambda f\, d\mu$. 
A function $f\colon\RR^d \to \XX$ is said to be \emph{locally $\lambda$-integrable} if $f|_A\in L_1^\lambda(A,\XX)$ for every bounded measurable $A\subseteq\RR^d$.

Given $d\in\NN$, we denote by $\Cu$ the set consisting of all $d$-dimensional open cubes. If $y\in\RR^d$, the set $\Cu[y]$ consisting of all $Q\in\Cu$ such that $y\in Q$ is a directed set when ordered by inverse inclusion. We denote by ``$Q\in\Cu\to y$'' the convergence with respect to that directed set.

The following improves \cite{AA2013}*{Theorem 5.2}.
\begin{theorem}\label{thm:ftc}
Let $\XX$ be a quasi-Banach space and $\lambda$ be a symmetric function quasi-norm. Suppose that $\lambda$ is $p$-concave for some $0<p<1$ and that $(\lambda,\XX)$ is amenable. Then, for any locally $\lambda$-integrable function $f\colon\RR^d\to\XX$,
\[
\lim_{Q\in\Cu\to y} \frac{1}{|Q|} \int_Q^\lambda f(x)\, dx=f(y) \quad \text{ a.e.\ } y\in\RR^d.
\]
\end{theorem}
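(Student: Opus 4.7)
The plan is to adapt the classical Lebesgue differentiation strategy to the $\lambda$-integral: establish convergence on a dense class, prove a weak-type maximal inequality, and close the argument by the standard limsup/density trick. The conclusion is local and $f$ is locally $\lambda$-integrable, so it suffices to consider $f\in L_1^\lambda(Q_0,\XX)$ for a fixed open cube $Q_0$ and $y$ in its interior. By Proposition~\ref{prop:TP}~\ref{prop:TP:Equiv}, every such $f$ admits an expansion $f=\sum_{j=1}^\infty x_jh_j$ with $\lambda((\lVert x_j\rVert\,\lVert h_j\rVert_1)_j)\le 2\lVert f\rVert_{L_1^\lambda}$, and the same proposition yields the density of the $\XX$-valued simple functions $\Sp(\mu|_{Q_0},\XX)$ in $L_1^\lambda(Q_0,\XX)$.

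For a simple $g=\sum_{j=1}^n x_j\chi_{E_j}$, linearity of the $\lambda$-integral reduces the average to $\frac{1}{|Q|}\int_Q^\lambda g\,dx=\sum_{j=1}^n x_j\,|Q\cap E_j|/|Q|$, and the scalar Lebesgue differentiation theorem applied to each $\chi_{E_j}$ yields $\frac{1}{|Q|}\int_Q^\lambda g\to g(y)$ at almost every $y$. This settles convergence on the dense class.

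The heart of the matter is the weak-type bound
\[
\lVert M^\lambda f\rVert_{L_{1,\infty}(\RR^d)}\le C\lVert f\rVert_{L_1^\lambda},\qquad M^\lambda f(y):=\sup_{Q\in\Cu[y]}\frac{1}{|Q|}\Big\lVert\int_Q^\lambda f\,dx\Big\rVert_\XX.
\]
Fixing $t>0$, the level set $\{M^\lambda f>t\}$ is covered by cubes $Q$ with $\lVert\int_Q^\lambda f\rVert>t|Q|$; Vitali's covering lemma extracts a pairwise disjoint subfamily $(Q_\beta)_\beta$ so that $\lvert\{M^\lambda f>t\}\rvert\le 3^d t^{-1}\sum_\beta\lVert\int_{Q_\beta}^\lambda f\rVert$. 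Inserting the expansion of $f$ and using that $\lambda$ galbs $\XX$,
\[
\Big\lVert\int_{Q_\beta}^\lambda f\Big\rVert\le K\,\lambda\Bigl(\bigl(\lVert x_j\rVert\,\bigl|\textstyle\int_{Q_\beta}h_j\bigr|\bigr)_j\Bigr).
\]
The $p$-concavity of $\lambda$ with $p<1$ entails, by the standard monotonicity of lattice concavity in the exponent, that $\lambda$ is lattice $1$-concave; summing across the disjoint $(Q_\beta)$ and invoking $\sum_\beta\bigl|\int_{Q_\beta}h_j\bigr|\le\lVert h_j\rVert_1$,
\[
\sum_\beta\lambda\Bigl(\bigl(\lVert x_j\rVert\,\bigl|\textstyle\int_{Q_\beta}h_j\bigr|\bigr)_j\Bigr)\le C_1\lambda\Bigl(\bigl(\lVert x_j\rVert\,\lVert h_j\rVert_1\bigr)_j\Bigr)\le 2C_1\lVert f\rVert_{L_1^\lambda},
\]
which, combined with the Vitali estimate, yields the weak-type bound.

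To finish, given $\varepsilon>0$ pick a simple $g$ with $\lVert f-g\rVert_{L_1^\lambda}<\varepsilon$ and set $h:=f-g$. Using the vanishing of $\frac{1}{|Q|}\int_Q^\lambda g-g(y)$ a.e.\ and the quasi-triangle inequality in $\XX$,
\[
\limsup_{Q\in\Cu\to y}\Big\lVert\tfrac{1}{|Q|}\textstyle\int_Q^\lambda f-f(y)\Big\rVert\le\kappa^2\bigl(M^\lambda h(y)+\lVert h(y)\rVert\bigr)\quad\text{a.e.\ }y,
\]
where $\kappa$ is the modulus of concavity of $\XX$. Proposition~\ref{prop:vectorgalb} applied with the $1$-convex $\rho=L_1$ and the $1$-concave $\lambda$ gives a continuous embedding $L_1^\lambda(\mu,\XX)\hookrightarrow L_1(\mu,\XX)$, and Chebyshev together with the weak-type bound just proved shows $\lvert\{y:\limsup>t\}\rvert\lesssim\varepsilon/t$; letting $\varepsilon\to 0$ forces the limsup to vanish almost everywhere. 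The most delicate step is the weak-type inequality, where the Vitali decomposition must be married to the $1$-concavity of $\lambda$ extracted from its $p$-concavity; the role of the hypothesis $p<1$ is precisely to guarantee that, after integrating the galb inequality over the disjoint cubes, the right-hand side collapses back to a single evaluation of $\lambda$ at $(\lVert x_j\rVert\,\lVert h_j\rVert_1)_j$.
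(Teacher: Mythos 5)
Your argument is correct and shares the paper's overall architecture (convergence on a dense class of simple functions plus a weak-type $(1,1)$ bound for the maximal operator $M^\lambda$), but the proof of the weak-type inequality --- which is the whole content of the theorem --- is genuinely different. The paper never runs a covering argument: it bounds $M[\XX,\lambda](f)$ \emph{pointwise} by $\lambda\bigl((Mf_j)_{j}\bigr)$, where $M$ is the scalar Hardy--Littlewood maximal operator, and then applies the $L_{1,\infty}$ quasi-norm to both sides, using the Minkowski-type inequality of Theorem~\ref{thm:WL} to interchange $\Vert\cdot\Vert_{1,\infty}$ with $\lambda$ and reduce everything to the scalar weak $(1,1)$ bound for $M$; this is where the hypothesis $p<1$ enters, since the lattice $p$-convexity of $L_{1,\infty}$ for $p<1$ (but not $p=1$) is what makes Theorem~\ref{thm:WL} available. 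Your route instead marries Vitali's covering lemma to the lattice $1$-concavity of $\lambda$, summing $\lambda\bigl((\Vert x_j\Vert\,\lvert\int_{Q_\beta}h_j\rvert)_j\bigr)$ over the disjoint cubes and collapsing the sum via $\sum_\beta\lvert\int_{Q_\beta}h_j\rvert\le\Vert h_j\Vert_1$. This buys something real: your proof only ever uses lattice $1$-concavity of $\lambda$, so it would establish the theorem under the formally weaker hypothesis $p\le 1$, and it bypasses $L_{1,\infty}$ and its convexity theory (hence Kalton's and Schep's results behind Theorem~\ref{thm:WL}) entirely; the price is the covering argument and the need to track that the restricted integrals $\int_{Q_\beta}^\lambda f$ are all computed from one global expansion of $f$, which is legitimate by Lemma~\ref{lem:TO} but should be said. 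The one step you assert without proof is that lattice $p$-concavity for $p<1$ implies lattice $1$-concavity. This is true but not trivial for quasi-Banach lattices and is not stated in the paper; it can be extracted from the paper's own results (combine Theorem~\ref{thm:MIIpconvex} with the proposition following it, for the pair $(\ell_1,\lambda)$, noting that $p$-concavity gives a lower $p$-estimate and hence $L$-convexity by Corollary~\ref{cor:96}), or from the Lindenstrauss--Tzafriri monotonicity of concavity after $r$-convexifying $\lambda$ into a Banach lattice. A justification of that step, and a remark that $\lambda$ satisfies the standing regularity hypotheses (local absolute continuity, Fatou, $L$-convexity) needed for these tools --- hypotheses the paper's own proof also uses silently when it invokes Theorem~\ref{thm:WL} --- would make your argument complete. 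The remaining ingredients (density of $\Sp(\mu|_{Q_0},\XX)$ via Proposition~\ref{prop:TP}~\ref{prop:TP:Equiv}, the continuous embedding of $L_1^\lambda(\mu,\XX)$ into $L_1(\mu,\XX)$ for the Chebyshev step, and the closing limsup argument) are all correct.
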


\begin{proof}
Set
\[
M[\XX,\lambda](f)(y)=\sup_{Q\in\Cu[y]} \frac{1}{|Q|} \left\Vert \int_Q^\lambda f(x)\, dx \right\Vert, \quad f\in L_1^\lambda(\RR^d,\XX), \ y\in\RR^d.
\]
If $\kappa$ is the modulus of concavity of $\XX$, we have
\[
M[\XX,\lambda](f+g)\le\kappa M[\XX,\lambda](f)+\kappa M[\XX,\lambda](g),\quad f,g\in L_1^\lambda(\RR^d,\XX).
\]
The result holds for functions in the set
\[
\Ft=\{ x\chi_Q \colon x \in\XX, \ Q\in\Cu\}.
\]
Since $[\Ft]= L_1^\lambda(\RR^d,\XX)$, it suffices to prove that the maximal function $M[\XX,\lambda]$ is bounded from $L_1^\lambda(\RR^d,\XX)$ into $L_{1,\infty}(\RR^d)$.
Let $M$ be the classical Hardy-Littlewood maximal function. Let $f=\sum_{j=1} ^\infty x_j f_j$, where $(x_j)_{j=1}^\infty$ is in the unit ball of $\XX$ and $(f_j)_{j=1}^\infty$ in $L_1(\RR^d)$ satisfies $\lambda( (\Vert f_j\Vert_1)_{j=1}^\infty)<\infty$, be an expansion of $f\in L_1^\lambda(\RR^d,\XX)$. We have
\[
M[\XX,\lambda](f)\le \lambda ((M(f_j))_{j=1}^\infty).
\]
By Theorem~\ref{thm:WL}, the pair $(\lambda,L_{1,\infty}(\RR^d))$ has the MII property. Since $M$ maps $L_1(\RR^d)$ into $L_{1,\infty}(\RR^d)$,
\[
\Vert M[\XX,\lambda](f)\Vert_{1,\infty} \le C_1 \lambda ( (\Vert M(f_j)\Vert_{1,\infty})_{j=1}^\infty )\le C_1 C_2 \lambda ( (\Vert f_j\Vert_1)_{j=1}^\infty ),
\]
where the constants $C_1$ and $C_2$ do not depend on $f$. Consequently, there is constant $C$ such that $\Vert M[\XX,\lambda](f)\Vert_{1,\infty}\le C \Vert f\Vert_{L_1^\lambda(\RR^d,\XX)}$ for all $f\in L_1^\lambda(\RR^d,\XX)$.
\end{proof}

\bibliographystyle{abbrv}
\bibliography{BiblioToward}

\end{document}